\documentclass[12pt, reqno]{amsart}
\usepackage{amsfonts, amssymb}
\usepackage{amsmath, amscd}
\usepackage{bbm}
\usepackage{array, longtable}
\newcolumntype{C}{>{$}c<{$}}  
\newcolumntype{R}{>{$}r<{$}}  
\newcolumntype{L}{>{$}l<{$}}  
\setlength{\LTpost}{0pt}            
\setlength\tabcolsep{5pt}     
\usepackage{color}
\usepackage[colorlinks=true, pdfstartview=FitV, linkcolor=blue, citecolor=blue, urlcolor=blue, backref]{hyperref}
\usepackage{tikz}
\usepackage{tikz-cd}
\usepackage{graphicx}
\usepackage{mathtools}
\usepackage{enumitem}
\usetikzlibrary{arrows, decorations.markings}
\usetikzlibrary{decorations.markings, arrows.meta}
 \usepackage[all]{xy}
\oddsidemargin 0in
\evensidemargin 0in
\textwidth 6.5in

\newlength{\bibitemsep}\setlength{\bibitemsep}{.2\baselineskip plus .05\baselineskip minus .05\baselineskip}
\newlength{\bibparskip}\setlength{\bibparskip}{1pt}
\let\oldthebibliography\thebibliography
\renewcommand\thebibliography[1]{%
  \oldthebibliography{#1}%
  \setlength{\parskip}{\bibitemsep}%
  \setlength{\itemsep}{\bibparskip}%
}

\usepackage{mathrsfs}
\usepackage[scr=boondox]{mathalfa}

\theoremstyle{plain}
\newtheorem{theorem}{Theorem}[section]
\newtheorem{lemma}[theorem]{Lemma}
\newtheorem{proposition}[theorem]{Proposition}

\theoremstyle{definition}
\newtheorem{remark}[theorem]{Remark}
\newtheorem{definition}[theorem]{Definition}


\newcommand{\cc}{\mathsf{k}}

\newcommand{\nc}{\newcommand}
\nc{\Y}{\rotatebox[origin=c]{180}{$Y$}}
\nc{\D}{\reflectbox{$D$}}
\usepackage[X2,T1]{fontenc}
\DeclareSymbolFont{cyrillic}{X2}{cmr}{m}{n}
\SetSymbolFont{cyrillic}{bold}{X2}{cmr}{bx}{n}
\DeclareMathSymbol{\R}{\mathord}{cyrillic}{223}
\nc{\rnc}{\renewcommand}

\nc{\on}[1]{\operatorname{#1}}
\nc{\delete}[1]{}
\nc{\onbf}[1]{\on{\bf #1}}
\nc{\onsf}[1]{\on{\sf #1}}
\nc{\red}{\color{red}}
\nc{\blue}{\color{blue}}
\nc{\bb}[1]{\mathbb{#1}}
\nc{\mbf}[1]{\mathbf{#1}}
\nc{\mc}[1]{\mathcal{#1}}
\nc{\mf}[1]{\mathfrak{#1}}
\nc{\msf}[1]{\mathsf{#1}}

\font\cyr=wncyr10
\nc{\sha}{{\mbox{\cyr X}}}

\nc{\udim}{\underline{\on{dim}}}
\nc{\guvw}{g_{U,V}^{W}}

\nc{\vac}{\mathbbm{1}}

\nc{\htt}{\on{ht}}
\nc{\gr}{\on{\bold {gr}}}
\nc{\cf}{\on{cf}}
\nc{\ad}{\on{ad }}
\nc{\Ad}{\on{Ad}}
\nc{\id}{\on{Id}}
\nc{\Fr}{\on{Fr}}
\nc{\Der}{\on{Der}}
\nc{\End}{\on{End}}
\nc{\tor}{\on{Tor}}
\nc{\Ext}{\on{Ext}}
\nc{\ext}{\on{ext}}
\nc{\Fil}{\on{Fil}}
\nc{\Hom}{\on{Hom}}
\nc{\grhom}{\on{grHom}}
\nc{\ch}{\on{ch}}
\nc{\Ch}{\on{\bf Ch}}
\nc{\ind}{\on{Ind}}
\nc{\coind}{\on{Coind}}

\nc{\Mod}{\on{-Mod}}
\nc{\biMod}{\on{-biMod}}
\nc{\Poiss}{\on{-Poiss}}
\nc{\grmod}{\on{-grMod}}
\nc{\vamod}{\on{-VAMod}}
\nc{\res}{\on{Res}}
\nc{\soc}{\on{Soc}}
\nc{\rad}{\on{Rad}}
\nc{\Aut}{\on{Aut}}
\nc{\Dist}{\on{Dist}}
\nc{\Lie}{\on{Lie}}
\nc{\Ker}{\on{Ker}}
\nc{\im}{\on{Im}}
\nc{\wt}{\on{wt}}
\nc{\st}{\on{St}}
\nc{\diag}{\on{Diag}}
\nc{\rep}{\on{rep}}
\nc{\Set}{\on{\bf Set}}
\nc{\sSet}{\on{\bf sSet}}
\nc{\smCat}{\on{\bf smCat}}
\nc{\spec}{\on{spec}}
\nc{\Sym}{\on{Sym}}
\nc{\Vecs}{\on{\bf Vec}^{s}}
\nc{\colim}{\operatornamewithlimits{\underset{\longrightarrow}{lim}}}

\nc{\graphdot}{\onwithlimits{\bullet}}
\nc{\lrarrows}{\onwithlimits{\leftrightarrows}}

\nc{\interval}[1]{\mathinner{#1}}
\nc{\blist}{\begin{list}{\rom{(\roman{enumi})}}{\setlength{\leftmargin}{0em}
\setlength{\itemindent}{7ex}
\setlength{\labelsep}{2ex}\setlength{\listparindent}{\parindent}
\usecounter{enumi}}}
\nc{\elist}{\end{list}}
\nc{\subsub}[1]{\noindent{\bf #1}}

\usepackage{etoolbox}
\apptocmd{\thebibliography}{\setlength{\itemsep}{5pt}}{}{}

\allowdisplaybreaks
\usepackage{empheq}

\makeatletter \def\l@subsection{\@tocline{2}{0pt}{1pc}{5pc}{}} \def\l@subsection{\@tocline{2}{0pt}{2pc}{6pc}{}} \makeatother

\begin{document}

\title[Equivariant vertex coalgebras]{Equivariant vertex coalgebras, $C_2$-coalgebras and duality for diagonalisable group schemes}
\author[A. Caradot]{Antoine Caradot$^1$}
\address{$^1$Hubert Curien Laboratory \\ Jean Monnet University \\ Saint-\'Etienne \\ FRANCE}
\email{antoine.caradot@univ-st-etienne.fr}
\author[Z. Lin]{Zongzhu Lin$^2$}
\address{$^2$Department of Mathematics\\
Kansas State University \\
Manhattan, KS 66506, USA}
\email{zlin@ksu.edu}

\thanks{2020 {\it Mathematics Subject Classification:}
Primary 16T15, 17B69; Secondary 17B63, 18M05}

\date{\today}


\maketitle

\begin{abstract}
In this paper, we define vertex algebras and vertex coalgebras in the category of rational $G_\Gamma$-modules, where $G_\Gamma$ is the group scheme defined by the group algebra $\msf k \Gamma$ for an abelian group $\Gamma$. In this context, we introduce the notion of $C_2$-coalgebra for a vertex coalgebra. We prove that there exists a duality between vertex algebras and vertex coalgebras in the category of $G_\Gamma$-modules, and this duality establishes a connection between $C_2$-algebras and $C_2$-coalgebras. Moreover, we also investigate the relationship between their respective modules/comodules.
\end{abstract}

\tableofcontents
\addtocontents{toc}


 \section{Introduction}
 The purpose of this paper is fourfold. The first objective is to serve as a model for defining a vertex algebra over an algebraic stack. 
 In this case, we have a diagonalisable algebraic $\msf k$-group scheme $G_\Gamma$, defined by an abelian group $ \Gamma$, acting trivially on the point $\spec(\msf k)$.
 Here $ \msf k$ is any field of characteristic zero. In this case, $\Gamma$ is the group of characters of $ G_\Gamma$.
 The category of quasi-coherent sheaves is nothing but the category of (possibly infinite dimensional) comodules of the group algebra $\msf k \Gamma$. 
 This category is a symmetric monoidal category and $\Gamma$ is the Picard group of this category. 
 The second objective is to deform a vertex algebra by a map $\beta: \Gamma\times \Gamma\to \msf k $ in order to define a $(G_\Gamma, \beta)$-vertex algebra. In the case where $\beta$ is a 2-cocycle (with values in $\msf k^\times$), $\beta$ can be thought of as a deformation of the braiding of the symmetric monoidal category. This approach has been explored by Borcherds in his effort to define quantum vertex algebras in \cite{Bo3}. 
 But here the map $ \beta$ has values in $ \msf k$ which are not necessarily invertible.  
 For example, when $\beta$ takes constant value zero,  the Jacobi identity degenerates to the weak associative case and the $(G_\Gamma, \beta)$-vertex algebras are closely related to the field algebras studied by Bakalov and Kac \cite{Bakalov-Kac} and the open string vertex algebras studied by Huang \cite{Huang}. 
 The third objective is to study the duality between vertex algebras and vertex coalgebras as well as the associated $C_2$-(co)algebras. 
 Vertex operator coalgebras and the duality were studied by Hubbard \cite{Hub1, Hub2}.
 It is well-known \cite{Bulacu-Caenepeel-Panaite-VanOystaeyen} that in a symmetric rigid tensor category, the category of algebras and the category of coalgebras are isomorphic via the duality functor.
 But the category of $G_\Gamma$-modules is not rigid. It contains a full closed rigid tensor subcategory of finite dimensional modules (compact objects). 
 The last goal is to serve as a model to formulate vertex algebras and vertex coalgebras, as well as their representation theory, 
 in a closed symmetric monoidal category with a goal to formulate derived vertex algebras over a derived algebraic stack. 
 
 There has been a long list of literature using both categorical approaches \cite{Bo3} and geometric  approach \cite{BD, BF} to understand vertex algebras, in addition to the classical approach of using operator product expansions formulated in \cite{FLM}.  
 A good way to understand vertex operators or fields   is in terms of operator valued distributions outlined in \cite{Kac}. 
 There are two issues from this point of view. One is the source of the distribution (the source analytic spaces) and the other is the linear spaces on which the operators act. The approaches in \cite{BD, BF, Bo2} deal with the source spaces. The work of Borcherds in \cite{Bo3} actually deals with both the source and the target linear spaces in a very general categorical setting.  The focus of our work is mainly on the target linear spaces, which are closely related to derived algebraic varieties. The work in \cite{Caradot-Jiang-Lin-4, Caradot-Jiang-Lin-5} is on the category of differential complexes on which the operators act. In this paper, 
 we focus on the category of rational modules over a diagonalisable algebraic group scheme $G$. One can easily extend to the category of differential complexes over this category following the setting of \cite{Caradot-Jiang-Lin-4, Caradot-Jiang-Lin-5}.

A group $G$ acting on the linear space on which the operators act can  be regarded as the group automorphisms of a vertex algebra.
Borcherds extended the source space to higher dimensions in \cite{Bo2} in terms of vertex groups, so the classical 1-dimensional case corresponds to the vertex group $G_1$ which is the additive group $G_a$ with a singularity at $0$. The vertex group $G_1$ roughly consists of the Hopf algebra of distributions (algebra of invariant differential operators \cite[Ch. 7]{Jantzen:book}), the completion of the coordinate algebra at $0$, and the algebra of rational functions with singularities at $0$. Li in \cite{Li3} has given an axiomatic definition of  vertex algebras in this setting.  One can also make the group $G$ act on the vertex group so that the state-field correspondences are compatible with the $G$-action. This is the main motivation of the setting in this paper. In \cite{JKLT1,JKLT3, Li1}, the equivariant vertex algebras (and the generalization to quantum vertex algebras) deals with an abstract group $G$ as well as a character $\chi$ of $G$ which determines the action of $G$ on the vertex group $G_1$. When the group $G$ is a cyclic of finite order, the corresponding representations in this context are exactly twisted representations of a vertex algebra. It should be mentioned that there is another generalization of the action on the vertex group $G_1$ expressed as a generalization of the locality in \cite{GKK, Li2}, which replaces the polynomials $(x_1-x_2)^N$ by $\prod_{i=1}^N(x_1-\alpha_i x_2)$ with $\alpha_i$ being distinct invertible elements. 

In addition to the notion of vertex coalgebra by Hubbard \cite{Hub1, Hub2, Hub3}, 
there is also another different version of vertex coalgebras defined in \cite{HLX}. 
The vertex coalgebra here is actually a coalgebra object in the category of vertex algebras, which is a symmetric monoidal category \cite{JL}. 
Therefore naturally the notion of vertex bialgebras in this context have been used to study the deformation theory of vertex algebras in \cite{JKLT2}. 
But we follow Borcherds' approach to vertex algebras to define vertex coalgebras in term of distributions on the vertex group $ G_1$ with values in $\Hom_{\msf k}(V, V\otimes V)$, in comparison to vertex algebras as distributions with values in $ \Hom_{\msf k}(V\otimes V, V)=\Hom_{\msf k}(V, \Hom_{\msf k}(V, V))$. 

 This paper will not spell out the details of a  differential graded version of vertex algebras and vertex coalgebras in the category of $G_\Gamma$-modules.  One can follow the setting in \cite{Caradot-Jiang-Lin-4} without any substantial difficulty. 
 The vertex Lie coalgebra is not treated as well. 
 This can be done more or less by following \cite{Caradot-Jiang-Lin-5} and applying the duality in this paper. 
 Instead, most of these settings  will be uniformly treated in a categorical approach in an upcoming paper \cite{Caradot-Lin-7}.  
 The categorical approach has the advantage that it can be applied in many different situations, both geometrically and cohomologically. 
 In \cite{Caradot-Jiang-Lin-1, Caradot-Jiang-Lin-2} the cohomological properties of representations of vertex algebras were discussed. In particular, the cohomology theory of the $C_2$-algebra provides geometric invariants to a vertex algebra. 
 It is expected that the same can be done for $G_\Gamma$-equivariant vertex algebras as well as $G_\Gamma$-equivariant representations. 
 The corresponding cohomological varieties would carry natural $ G_\Gamma$-actions. 
 Many other topics related to coalgebra representation theory such as contra-modules for vertex coalgebras as well as their relations to the dual vertex algebra representations are much more involved and require topological structures with respect to operator topology. 
 Another topic to be treated is the dual version of the Zhu algebra for vertex operator coalgebras. 
 In this case, we expect to define Zhu coalgebras as well as the functors between the various comodules of the vertex coalgebras and of the Zhu coalgebras.
 
Given an algebraic group $\msf k$-scheme $G$, the category of the finite-dimensional rational representations is a closed rigid symmetric monoidal category. Indeed, it is a Tannakian category, which characterizes the algebraic  (super)group $\msf k$-scheme from Deligne's Tannakian duality philosophy. However, given two infinite-dimensional representations $V$ and $W$, the linear space $ \Hom_{\msf k}(V, W)$ is no longer a $G$-module \cite[2.7]{Jantzen:book} 

 \delete{
 {\color{red} (WHY?).
 
With our definition of a rational $G_\Gamma$-module $V$, $V$ is $\Gamma$-graded, and thus $\Hom_{\msf k}(V, W)=\prod_{\gamma \in \Gamma}\Hom_{\msf k}(V_\gamma, W)$. 
It follows that if $U$ is infinite dimensional, then the $\on{Hom}_\cc$ might have infinitely many homogeneous components, and hence not be $\Gamma$-graded and so not be rational. 

 Humphreys' (and Jantzen's) definition of a rational $G$-module $V$ is a morphism of algebraic groups $G \longrightarrow \on{GL}(V)$ (see Humphreys - Linear Algebraic Groups pages 55 and 60).
 If $U,V$ are rational infinite dimensional $G$-modules, why is $\on{Hom}_\cc(U,V)$ not rational?
 }
 }
 
 In this paper, for $G$ diagonalisable, we define an internal hom $G$-module $\mc Hom(V, W)$ for any two rational $G$-modules $ V$ and $W$, providing $G\Mod$ with adjunction formulas (see Equations \eqref{eq:tensor_hom_duality} and \eqref{eq:tensor_hom_duality2}).    
 If the group $G$ acts on the affine line $\bb A^1$ linearly, which is determined by a character $\gamma_0$,  
 the action also extends to the local function field $\msf k((t))$ at $0$ with $t$ being a local coordinate. 
 But $\msf k((t))$ is not a rational $G$-module and it contains a dense rational $ G$-submodule $\msf k[t, t^{-1}]$. The work in this paper would easily extends to more general toric varieties. 

 In conformal field theory, it is important to keep the action of the algebra of differential operators of $ \bb A^1$ at $0$ (see \cite{Bo2} in terms of vertex groups). 
We would require the $G_\Gamma$-modules to be equipped with a compatible action of the algebra of differential operators.  
The derivation $ \frac{d}{dt}$ is not a $G_\Gamma$-homomorphism, but it is an element in $ \mc Hom(\msf k[t,t^{-1}],\msf k[t,t^{-1}]) $.  
In this setting, a vertex algebra structure would be a $G_\Gamma$-equivariant map $ V\longrightarrow \mc Hom(\msf k[t,t^{-1}], \mc Hom_{\msf k}(V,V))$ satisfying the $\beta$-Jacobi identity together with a vacuum element. 

Our definition of a vertex coalgebra is slightly different from that in \cite{Hub1,Hub2, Hub3, Hub4}. 
Hubbard used the duality over $\bb P^1$ to have the singularity at $\infty$ while we keep the singularity at $0$ following the spirit of \cite{Bo2}. 
 This is for the purpose of unifying the categorical approach in the spirit 
 that vertex operators are operator valued distributions at $0$ of $\bb A^1$ \cite{Bo2, Bo3, Kac}. 
 Thus the vertex algebra structure on a vector space $V$ is a distribution, i.e., an element of 
$\Hom^{\on{cont}}_{G_\Gamma}(\msf k[t,t^{-1}], \mc Hom(V\otimes V, V))$, and a vertex coalgebra 
 structure on a vector space $V$ is also a distribution at $0$ on $ \bb A^1$, i.e., an element of $\Hom^{\on{cont}}_{G_\Gamma}(\msf k[t,t^{-1}], \mc Hom(V, V\otimes V))$. 
 This approach is easily applied to more general closed tensor categories provided an operator topology is defined on the internal hom space $\mc Hom(V, W)$ for any two objects $V$ and $ W$.  

 Given a vertex coalgebra, one can naturally discuss the dual version of the $C_2$-algebra, i.e., 
 the $C_2$-coalgebra, which should be a co-Poisson coalgebra, as well as the comodules for vertex coalgebra. 
 This is natural as one can expect to be able to extend most of the results of \cite{Sweedler} in any closed symmetric monoidal. However, to consider the duality between vertex algebras and vertex coalgebras in general,
 compact objects are expected to satisfy the duality condition similar to the Poincare duality for compact spaces. 
 But in the category of $G_\Gamma$-modules, the compactness and those objects $V$ satisfying $(V')'=V$ with $ V'=\mc Hom(V, \mbf 1)$ are different. 
 In fact, the latter are called Harish-Chandra in the sense of representations of Lie algebras of Lie groups, meaning that when restricted to the maximal compact subgroup, they have finitely many components for each irreducible representation. 
 Thus our work in this paper can also be applied to Harish-Chandra module categories of Lie groups, which we will not discuss further.

 The paper is outlined as follows. In the Section \ref{sec:2}, we give the basic setting of the category of rational $G_\Gamma$-modules (which we will simply call as $G_\Gamma$-modules). 
 We construct the closed tensor product structure as well as the operator topology on the internal hom spaces. 
 In Section \ref{sec:va}, we define $(G_\Gamma, \beta, \gamma_0)$-vertex algebras/coalgebras (Def. \ref{def:va} and \ref{def:cova}).
 Here $ \gamma_0$ is the character defining the action of $G_\Gamma$ action on $ \bb A^1$. 
 When $ \Gamma=\bb Z/l\bb Z$, the group $ G_\Gamma$ is a cyclic group of order $l$. 
 Thus the definitions of vertex algebra and vertex coalgebra also include twisted vertex algebra and twisted vertex coalgebra by choosing an appropriate $\beta$ with $\gamma_0$ being the generator of $ \Gamma$. 
 Then we prove the duality between vertex algebras and vertex coalgebras provided they satisfy the Harish-Chandra condition (Thm. \ref{thm:duality}). Section \ref{sec:va_mod} 
 is devoted to the modules/comodules of $(G_\Gamma, \beta, \gamma_0)$-vertex algebras/coalgebras (Def. \ref{def:va_mod} and \ref{def:cova_mod}),
 as well as the duality existing between them (Thm. \ref{thm:duality_mod}). In Section \ref{sec:C2}, we look at the $C_2$-algebra of a $(G_\Gamma, \beta, \gamma_0)$-vertex algebra and determine some of its properties (Thm. \ref{thm:C_2_Poisson}) as well as those of its associated modules (Thm. \ref{thm:C_2_Poisson_mod}). We define the $C_2$-coalgebra of a $(G_\Gamma, \beta, \gamma_0)$-vertex coalgebra in Section \ref{sec:coC2} and proceed similarly as in Section \ref{sec:C2} (Thm. \ref{thm:C_2_co_Poisson} and \ref{thm:C_2_co_Poisson_mod}). Finally, in Section \ref{sec:duality}, we look at the duality between $C_2$-algebras and $C_2$-coalgebras (Thm. \ref{thm:Poisson_iso} and \ref{thm:co_Poisson_iso}), as well as the duality between their respective modules/comodules (Thm. \ref{thm:Poisson_iso_mod} and \ref{thm:co_Poisson_iso_mod}).

\medbreak

{\bf Acknowledgment.} This work was initiated when the first author was visiting Kansas State University in the fall of 2023. 
He expresses his appreciation to the department of mathematics for providing support and a research environment.  

\delete{why do we add $\beta$? to deform the vertex algebra, also deforms the Poisson and the Lie structure (example with $\beta=1$ and $\beta=0$).

line bundle=invertible object in the tensor category of vector bundles.    Set of invertibles = Picard group

Lax equation}

\section{The category of representations of diagonalisable groups}\label{sec:2}
\subsection{The category of rational $G_\Gamma$-modules}\label{sec:A.1}Let $ \Gamma$  be an abelian group and $ \msf k$ be a field. A $ \Gamma$-graded $\msf k$-vector space is of the form $V=\bigoplus_{\gamma \in \Gamma}V_\gamma$. 
A homomorphism $ f: U=\bigoplus_{\gamma \in \Gamma}U_\gamma\longrightarrow V=\bigoplus_{\gamma \in \Gamma}V_\gamma$ 
satisfies $f(U_\gamma)\subseteq V_\gamma$ for all $ \gamma\in \Gamma$.  

Let $ G_\Gamma=\onsf{spec}(\msf k\Gamma)$ be the affine algebraic group scheme over $ \msf k$ 
defined by the commutative Hopf algebra $ \msf k\Gamma$, 
the group algebra of $\Gamma$. We will write the basis elements of $ \cc \Gamma$ by $ e^{\gamma}$ so that $e^\gamma e^{\gamma'}=e^{\gamma+\gamma'}$ is the multiplication in $ \cc \Gamma$. Then a $\Gamma$-graded $\msf k$-vector space is nothing but a comodule of the coalgebra $ \msf k \Gamma$, with the comodule structure $V\longrightarrow V\otimes \cc \Gamma$ 
defined by $ v\longmapsto \sum_{\gamma}v_{\gamma} \otimes e^\gamma$ where $v=\sum_\gamma v_\gamma$ with $v_\gamma \in V_\gamma$. The $\cc \Gamma$-comodules are called \textbf{rational} $ G_\Gamma$-\textbf{modules} (\cite{Jantzen:book}). We denote this category by $ G_\Gamma\Mod$. Let $ \Hom_{G_\Gamma}(V,W)$ be the space of $ G_\Gamma$-module homomorphisms between two rational $ G_\Gamma$-modules (from now on we will simply say $G_\Gamma$-modules).
Then we have $ \Hom_{G_\Gamma}(V,W) =\prod_{\gamma\in\Gamma}\Hom_{\cc}(V_\gamma, W_\gamma)$. The category $G_\Gamma\Mod$ is an abelian category with arbitrary (small) direct limit (see \cite[I 2.11]{Jantzen:book}). 

The category  $G_\Gamma\Mod$ has 
$\{\msf k_\gamma\;|\; \gamma\in \Gamma\}$ parametrising the isomorphism classes of irreducible modules with the trivial $G_\Gamma$-module $ \cc=\cc_0$ being the tensor identity.
So $V=\bigoplus_{\gamma\in \Gamma}\Hom_{G_\Gamma}(\cc_\gamma, V) \otimes \cc_\gamma$.  In this paper we will freely use either $G_\Gamma$-modules or $\Gamma$-graded vector spaces for objects in $G_\Gamma\Mod$. Note that $ \Gamma=\Hom_{gp \, sch}(G_\Gamma, G_m)$ is the group of characters and the decomposition $V=\bigoplus_{\gamma \in \Gamma}V_\gamma$ is exactly the weight space decomposition of $V$, where the $G_\Gamma$-action is given by $g.v=\gamma(g)v$ for all $g \in G_\Gamma$ and $v \in V_\gamma$. Since we will also be taking different gradations, we avoid using the word ``degree'' to designate $\gamma$ whenever possible to avoid any possible confusion. 

The category $G_\Gamma\Mod$  is a closed symmetric monoidal category with tensor product 
\[V\otimes W=\bigoplus_{\gamma \in \Gamma}\left(\bigoplus_{\gamma'\in \Gamma}\left(V_{\gamma-\gamma'}\otimes W_{\gamma'}\right)\right)
\]
and the standard braiding of the category of $ \cc$-vector spaces $ T: V\otimes W\to W\otimes V$ given by $T(v\otimes w)=w\otimes v$.
The internal hom-space is the object $\mc Hom(V,W)$ in $G_\Gamma\Mod$ defined by 
\[
\mc Hom(V,W)_{\gamma}=\prod_{\gamma'\in \Gamma}\Hom_{\msf k}(V_{\gamma'}, W_{\gamma'+\gamma})=\Hom_{G_\Gamma}(V\otimes \msf k_{\gamma}, W).
\]
Thus we have 
\[
\mc Hom(V,W)=\bigoplus_{\gamma\in \Gamma}\Hom_{G_\Gamma}(V\otimes \msf k_{\gamma}, W).
\]
Note that $ \msf k_\gamma\otimes \msf k _{\gamma'}=\msf k_{\gamma+\gamma'}$ 
and $ \mc Hom(\msf k_\gamma,\msf k_{\gamma'})=\msf k_{\gamma'-\gamma}$. Therefore $\Gamma$ is the Picard group of $ G_\Gamma\Mod$, the groups of all invertible objects. 

\delete{
By \cite{Sweedler}, for any  coalgebra $ C$, the dual space $C^*= \Hom_{\cc}(C, \cc)$ is an associative algebra.  
 For each (right) $ C$-comodule $ M$ with $\delta: M\longrightarrow M\otimes C$, $M$ is a left $ C^*$-module with $ C^*\otimes M\longrightarrow M$ defined by $ fw= \sum_{i}w_i f(c_i)$ where $ \delta(w)=\sum_i w_i\otimes c_i$. 

This defines a functor $\iota_C:\onsf{Comod-}\!C\longrightarrow C^*\Mod$ which is faithful and exact. 
For a general coalgebra $C$, describing the subcategory $\onsf{Comod-}\!C $ in $C^*\Mod$ is a difficult question without certain finiteness conditions. For $C=\cc \Gamma$, We have a group homomorphism $\Gamma \longrightarrow \Hom_{Alg}(C^*, \cc)$ defined by evaluation at $ e^\gamma$ for each $ \gamma$. For each $C^*$-module $V$, set 
\[V_\gamma=\{v\in V\; |\; fv=f(e^\gamma)v \; \text{ for all } f\in C^*\}
\]
and $ \mc F(V)=\bigoplus_{\gamma\in \Gamma}V_\gamma$. Then $\mc F(V)$ is a $C$-comodule with the coaction $\delta: \mc F(V)\to \mc F(V)\otimes C$ defined by $ \delta(v)=\sum_{\gamma}v_\gamma\otimes e^\gamma$ if $ v=\sum_{\gamma} v_{\gamma}\in \mc F(V)$. The functor $\mc F: C^*\Mod\longrightarrow \onsf{Comod-}\!C$ is right adjoint to the functor $\iota_C$.

Let $ M, N$ be two objects in $G_\Gamma\Mod$. The linear space $ \Hom_{\cc}(M,N)$ is a rational $ G_\Gamma$-module if $ M$ is finite dimensional. In general case, for any submodule $M'\subseteq M$ such that $ M/M'$ is finite dimensional, then $\Hom_{\cc}(M/M', N)\subseteq \Hom_{\cc}(M,N)$ and the direct limit 
$\varinjlim_{M'}$ {\color{red}???????????????????????}
}

The following natural isomorphisms of functors can be easily verified. For any $ U, V, W \in G_\Gamma\Mod$
\begin{align}\label{eq:tensor_hom_duality}
\Hom_{G_\Gamma}(U\otimes V, W)&\cong\Hom_{G_\Gamma}(U, \mc Hom(V, W))
\end{align}
as vector spaces over $ \msf k$.
Then using the Yoneda embedding, one can show that
\begin{align}\label{eq:tensor_hom_duality2}
\mc Hom(U\otimes V, W)&=\mc Hom(U, \mc Hom(V, W))
\end{align}
 as  $ G_\Gamma$-modules.
 
As a consequence of \eqref{eq:tensor_hom_duality} and \eqref{eq:tensor_hom_duality2},
we have, for all $E, U, V, W $ in $G_\Gamma\Mod$,
\begin{align}
    \Hom_{G_\Gamma}(E\otimes U\otimes V, W)&=\Hom_{G_\Gamma}(E,\mc Hom(U, \mc Hom(V, W)));\\
 \mc Hom(E\otimes U\otimes V, W))&=\mc Hom(E,\mc Hom(U, \mc Hom(V, W))).   
\end{align}
Here the first isomorphism is in the category of $\msf k\Mod$ and the second isomorphism is the category $G_\Gamma\Mod$.

Although the identity $\Hom_{G_\Gamma}(\varinjlim_{\alpha}V^\alpha, W)=\varprojlim_{\alpha}\Hom_{G_\Gamma}(V^\alpha, W)$ holds, one does not have 
$\mc Hom_{G_\Gamma}(\varinjlim_{\alpha}V^\alpha, W)=\varprojlim_{\alpha}\mc Hom_{G_\Gamma}(V^\alpha, W)$. 
The reason is that the category $ G_\Gamma\Mod$ is not closed under inverse limit in general.

Given any rational $G_\Gamma$-module $V$, 
its linear dual $ V^*=\Hom_{\msf k}(V, \msf k)$ in general is not a rational $ G_{\Gamma}$-module, 
but the subspace 
\begin{align}\label{deg_restricted_dual}
V'=\mc Hom(V,\msf k)=\bigoplus_{\gamma}
\Hom_{G_\Gamma}(V\otimes \msf k_{\gamma}, \msf k)=\bigoplus_{\gamma}
\Hom_{\msf k}(V_{-\gamma}, \msf k)
\end{align}
is a rational $G_\Gamma$-module and $(V')_{\gamma}=(V_{-\gamma})^*$. 
This is what we will call \textbf{restricted dual}. While we have an evaluation map $ \on{ev}_{V}: V'\otimes V\longrightarrow \msf k$, the coevaluation map $\on{coev}_V: \msf k\longrightarrow V\otimes V'$ is not defined in general. 
We also have a natural homomorphism $ V'\otimes W'\longrightarrow (V\otimes W)'$ which is not an isomorphism in general. Moreover, one has a natural homomorphism $V\longrightarrow (V')'$, which is not an isomorphism unless $V_\gamma$ is finite dimensional for all $ \gamma$. 
We will call such a $G_\Gamma$-module of {\em Harish-Chandra} type. 
We will not go more in depth about this category although the full subcategory $G_\Gamma\on{-mod}$ of finite dimensional rational $ G_\Gamma$-modules has a rigid closed symmetric very tensor category structure. 
The full subcategory of all $G_\Gamma$-modules of Harish-Chandra type is not closed under tensor product. 

\begin{definition}
A  $G_\Gamma$-module $V$ is called \textbf{Harish-Chandra} if $\on{dim}V_\gamma < +\infty$ for all $\gamma \in \Gamma$. A  $G_\Gamma$-module $V$ is called \textbf{compact} if $V$ is Harish-Chandra and $V_\gamma=0$ for all but finitely many $ \gamma$. Thus $G_\Gamma\on{-mod}$ is consists of all compact objects. 
\end{definition}

We remark that given two Harish-Chandra $G_\Gamma$-modules $ V$ and $W$, the tensor product $V\otimes W$ and the internal hom space $ \mc Hom(V,W)$ are not  Harish-Chandra, unless one of them is compact.  However $ V$ is Harish-Chandra if and only if $V'=\mc Hom(V, \cc)$ is Harish-Chandra.
 The category of all compact $\Gamma$-graded vector spaces is a Tannakian category.
For a $\Gamma$-graded vector space $V=\bigoplus_{\gamma \in \Gamma}V_\gamma$, the pairing with its restricted dual is written as $\langle\cdot, \cdot \rangle : V' \otimes V \longrightarrow \cc$. For any $\Gamma$-graded vector spaces $U$ and $V$, we have a map (not necessarily an isomorphism)
\[
V' \otimes U' \longrightarrow (U \otimes V)'
\]
given by the pairing
\[
\langle f \otimes g, u \otimes v \rangle=g(u)f(v)
\]
for any $u \in U$, $v \in V$, $f \in V'$, and $g \in U'$. If $V$ is compact, then $ \mc Hom(V, W)=V'\otimes W$ for all $W$.

\subsection{The loop algebra and continuous functions}\label{sec:A.2} We note  that $\cc[t, t^{-1}]$ is the function algebra  of the punctured affine line 
$\bb A^1\setminus\{0\}$ equipped with the $t$-adic linear topology so the open neighbourhoods of $0$ are  $t^n\cc[t]$, 
$n \in \mathbb{Z}$. 
For any fixed $ \gamma_0\in \Gamma$, set $ \cc t=\cc_{\gamma_0}$ as a $G_\Gamma$-module. 
So $\cc t^n=\cc_{n\gamma_0}$. Then $\cc[t,t^{-1}]$ is a  $ G_\Gamma$-algebra defining a rational action of $G_\Gamma$ on $ \bb A^1\setminus \{0\}$.

Let $\cc((t))=\varprojlim_{n}\cc[t,t^{-1}]/t^n\cc[t]$. This space of power series $\cc((t))$ is not a $G_\Gamma$-module unless $\gamma_0$ has finite order. 
We compute $ \mc Hom(\cc[t,t^{-1}], \cc)$, which depends on $\gamma_0$. 
If $\gamma_0$ has finite order, say, $N=o(\gamma_0)$, then $\cc[t,t^{-1}]=\bigoplus_{i=0}^{N-1}\cc[t^N, t^{-N}]t^i$ and \[ \mc Hom(\cc[t,t^{-1}], \cc)=\bigoplus_{i=0}^{N-1} \Hom_{\cc}(\cc[t^N, t^{-N}]t^i, \cc).
\]

If $ \gamma_0$ is torsion free (i.e., not of finite order), then 
\[\mc Hom(\cc[t,t^{-1}], \cc)=\bigoplus_{n \in \mathbb{Z}}\Hom_{G}(\cc t^n,\cc_{n\gamma_0}).
\]

\delete{
We specify the pairing $ \cc[t,t^{-1}]\otimes \cc[t, t^{-1}]\longrightarrow \cc_{-\gamma_0}$ as follows. The differential operator $ \frac{d}{dt}$ acts on $ \cc[t,t^{-1}]$ naturally. 
Let $ \langle-,-\rangle: \cc[t,t^{-1}]\otimes \cc[t,t^{-1}]\to \cc_{-\gamma_0} =\msf kt^{-1}$ be defined by $ \langle f(t),g(t)\rangle=\on{Res}_t(f(t)g(t))$. 
Then the pairing is a homomorphism of $ G_\Gamma$-modules.  Since $\on{Res}_t(\frac{d}{dt}(f(t)g(t)))=0$
we get $\langle \frac{d}{dt}f(t),g(t)\rangle=-\langle f(t),\frac{d}{dt}g(t)\rangle$. \delete{Thus we want $\cc[x, x^{-1}]\longrightarrow \mc Hom(\cc[t,t^{-1}], \cc)$ to be an isomorphism as modules of the Lie algebra $ \onsf{Lie}(\bb G_a)$.} Then the pairing is given by $ \langle t^m, t^n\rangle =\delta_{n+m, -1}$ and is a module homomorphism for the Lie algebra $ \onsf{Lie}(\bb G_a)$. We write $\cc[x, x^{-1}]=\mc Hom(\cc[t, t^{-1}], k_{-\gamma_0})$ with $ \mc Hom(\cc t^n, \cc_{-\gamma_0})=\cc x^{-n-1}$  as $G_\Gamma$-modules. In particular, $\cc x^{-n-1}=\cc_{-n\gamma_0}\otimes \cc_{-\gamma_0} $ .
}

For a $\cc$-vector space $V$, we have
\[
\on{Hom}_{\cc}(\cc[t, t^{-1}], V) =\prod_{n\in \bb Z}\Hom_{\cc}(\cc t^n, V) =\prod_{n\in \bb Z}(\cc t^n)'\otimes V= V[[x, x^{-1}]]
\]
by seeing $\cc x^{n}=(\cc t^{-n-1})'=\mc Hom(\cc t^{-n}, \cc t)$ as $G_\Gamma$-modules.  We see that $|x^n|=n\gamma_0+\gamma_0$ as 
\begin{align*}
\begin{array}{rcl}
(\cc x^n)_\gamma &= &\mc Hom(\cc t^{-n-1}, \cc)_\gamma \\[5pt]
&= &\on{Hom}_\cc((\cc t^{-n-1})_{-\gamma}, \cc) \quad \quad \text{by Equation \eqref{deg_restricted_dual}}, \\[5pt]
\end{array}
\end{align*}
which is non zero if and only if $\gamma=-|t^{-n-1}|=n\gamma_0+\gamma_0$. 
It follows that $|x^n|=n|x|-(n-1)\gamma_0 \neq n|x|$. 
Hence $x \otimes x \neq x^2$ in $G_\Gamma\Mod$. In particular, $|x^{-1}|=0$.

Define the twisted tensor product $\stackrel{\gamma_0}{\otimes}$ on the category $ G_\Gamma\Mod$ by $U \stackrel{\gamma_0}{\otimes} V=(U \otimes \cc_{-\gamma_0}) \otimes (V \otimes \cc_{-\gamma_0}) \otimes \cc_{\gamma_0}$ (see \cite{Briggs-Witherspoon, Grimley-Nguyen-Witherspoon}). Then we have
\[
\cc x^m \stackrel{\gamma_0}{\otimes} \cc x^n=\cc x^{m+n}
\]
as $|x^m \stackrel{\gamma_0}{\otimes} x^n|=|x^m|-\gamma_0+|x^n|-\gamma_0+\gamma_0=(m+n)\gamma_0+\gamma_0=|x^{m+n}|$.

We note that if $V$ is a $G_\Gamma$-module, then $V[[x,x^{-1}]]$ is not a $\Gamma$-graded vector space. We also have
\[
\mc Hom(\cc[t,t^{-1}], V)
=\bigoplus_{\gamma\in\Gamma}\prod_n\Hom_{G_\Gamma}(\cc t^n, V\otimes \cc_{-\gamma})
=\bigoplus_{\gamma\in\Gamma}\prod_{n\in \bb Z}V_{\gamma+n\gamma_0}\subseteq V[[x,x^{-1}]].
\]

The linear map $D=-\frac{d}{dt}: \cc[t,t^{-1}]\longrightarrow \cc [t,t^{-1}] $ is not a homomorphism of $ G_\Gamma$-modules. However, $D\in \mc Hom(\cc [t,t^{-1}], \cc[t,t^{-1}])_{-\gamma_0}$ and $D$ also induces a map
\[
\tilde{D}:\Hom_{\cc}(\cc[t,t^{-1}], V)\longrightarrow \Hom_{\cc}(\cc[t,t^{-1}], V) 
\]
given by $\tilde{D}(f)=-f\circ D$. 
Then $\tilde{D}=\frac{d}{dx}$ under the identification of $ \cc x^{n}=(\cc t^{-n-1})'=\mc Hom(\cc t^{-n-1}, \cc)$ 
and we have $\tilde{D}(\mc Hom(\cc [t,t^{-1}], V))\subseteq \mc Hom(\cc [t,t^{-1}], V)$.

We then write $\on{Hom}^{\on{cont}}(\cc[t, t^{-1}], V)$ for the set of \textbf{continuous morphisms}. 
Based on the above equality, it is a subspace of $V[[x, x^{-1}]]$ and we can show that
\[
 \on{Hom}_{\cc}^{\on{cont}}(\cc[t,t^{-1}], V)=\varinjlim_n\Hom_{\cc}(\cc[t,t^{-1}]/t^n\cc[t], V)=\{f:\cc[t, t^{-1}] \longrightarrow V \ | \  f(t^n)=0 \ \text{for} \ n \gg 0  \};
\]

\[
 \on{Hom}_{G_\Gamma}^{\on{cont}}(\cc[t,t^{-1}], V)=\varinjlim_n\Hom_{G_\Gamma}(\cc[t,t^{-1}]/t^n\cc[t], V)=\{f:\cc[t, t^{-1}] \stackrel{G_\Gamma\on{-hom}}{\longrightarrow} V \ | \  f(t^n)=0 \ \text{for} \ n \gg 0  \};
\]

\begin{align*}
 \mc Hom^{\on{cont}}(\cc[t,t^{-1}], V) &
 =\displaystyle \varinjlim_n \mc Hom(\cc[t,t^{-1}]/t^n\cc[t], V) =\displaystyle\bigoplus_{\gamma \in \Gamma}\varinjlim_n \on{Hom}_{G_\Gamma}\big((\cc[t, t^{-1}]/t^n\cc[t]) \otimes k_\gamma, V\big)\\
 &=\displaystyle\bigoplus_{\gamma \in \Gamma}\{f:\cc[t, t^{-1}] \stackrel{G_\Gamma\text{-hom}}{\longrightarrow} V \otimes \cc_{-\gamma} \ | \  f(t^n)=0 \ \text{for} \ n \gg 0  \}  =V((x)).
\end{align*}
We note that $ \mc Hom^{\on{cont}}(\cc[t,t^{-1}], V)$ is an object in $ G_\Gamma\Mod$ and 
$\tilde{D}(\mc Hom^{\on{cont}}(\cc [t,t^{-1}], V))\subseteq \mc Hom^{\on{cont}}(\cc [t,t^{-1}], V)$. In particular, for any $U,V,W \in G_\Gamma\Mod$, we have
\[
\mc Hom^{\on{cont}}(\cc[t,t^{-1}], \mc Hom(U \otimes V,W))=\mc Hom(U \otimes V, \mc Hom^{\on{cont}}(\cc[t,t^{-1}],W)).
\]

\delete{
For a formal variable $x$ with $|x|=2\gamma_0 \in \Gamma$, although the space of formal series $V[[x, x^{-1}]]$ is not in $G_\Gamma\Mod$, it has a subspace $\mc Hom(\cc[t,t^{-1}], V)$ in $G_\Gamma\Mod$ where $|V_\gamma x^n|=\gamma+n\gamma_0+\gamma_0$.
}
\section{Vertex (co)algebras}\label{sec:va}

Let $(\Gamma, +)$ be an abelian group and $\beta: \Gamma \times \Gamma \longrightarrow \cc$ a map. 
For $U, V \in G_\Gamma\Mod$, we define a $G_\Gamma$-module homomorphism $T^\beta :U \otimes V \longrightarrow V \otimes U$ by $T^\beta(u \otimes v)=\beta(|u|, |v|) v \otimes u$ on the homogeneous components and extend by linearity. 
\delete{
A $\Gamma$-graded vector space $V$ equipped with $T^\beta$ is called a $(\Gamma, \beta)$-\textbf{graded vector space}.
}
For later use, we introduce a few conditions that $\beta$ may satisfy, which will facilitate the statements of later results:
\begin{align}
&\bullet \ \beta(\gamma_1, \gamma_2)=\beta(-\gamma_1, -\gamma_2) \text{ for all }\gamma_1, \gamma_2 \in \Gamma. \label{relation_1} \\[5pt]
&\bullet \  \beta(\gamma_1, \gamma_2)\beta(\gamma_1, \gamma_3)=\beta(\gamma_1, \gamma_2+\gamma_3) \text{ for all } \gamma_1, \gamma_2, \gamma_3 \in \Gamma. \label{relation_2} 
\end{align}

\begin{remark}
Using the map $T^\beta$ will allow for a more general Jacobi identity in the definition of a vertex (co)algebra. 
The category $G_\Gamma\Mod$ is a symmetric tensor category with the standard braiding. 
The $G_\Gamma$-module homomorphism $T^\beta$ does not define a new braiding in general since we do not impose the hexagon identities for  $T^\beta$. If we were to assume that $\beta$ is a $2$-cocyle, i.e.
\[
\beta(\gamma_1, \gamma_2)\beta(\gamma_1+\gamma_2, \gamma_3)=\beta(\gamma_2, \gamma_3)\beta(\gamma_1, \gamma_2+\gamma_3)
\]
for all $\gamma_1, \gamma_2, \gamma_3 \in \Gamma$, then $T^\beta$ would satisfy the cactus relation
\[
T^\beta_{V \otimes U, W}   (T^\beta_{U, V} \otimes \on{id}_W)=T^\beta_{U, W \otimes V}   (\on{id}_U \otimes T_{V, W}).
\]
This would make the category of $G_\Gamma$-modules close to a coboundary category (cf. \cite[Section 4]{Savage}).
\end{remark}

The notion of vertex algebra was introduced in \cite{Borcherds} and \cite{FLM}. We extend it below to the category $G_\Gamma\Mod$ with a parameter $\beta$ (see Section \ref{sec:A.1} for precisions on the notations).

\begin{definition}\label{def:va}
A $(G_\Gamma, \beta, \gamma_0)$-\textbf{vertex algebra} (over $\cc$) is a $G_\Gamma$-module $V$ equipped with a  map $Y(x)\in \Hom_{G_\Gamma}(V\otimes V, \mc Hom(\cc [t,t^{-1}], V))$ with $\cc t=\cc_{\gamma_0}$ as $G_\Gamma$-module:
\begin{align*}
\begin{array}{cccc}
Y(x): & V \otimes V & \longrightarrow & V[[x,x^{-1}]] \\
           & u \otimes v & \longmapsto      & \displaystyle \sum_{n \in \mathbb{Z}}Y_n(u\otimes v) \, x^{-n-1}
\end{array}
\end{align*}
called the vertex operator map, and an element $\vac \in V_0$ called the vacuum vector, satisfying the following axioms:
\begin{enumerate}[leftmargin=*, itemsep=5pt, label=(\roman*)]
\item\label{def:vacuum} Vacuum: for all $v \in V$,
\begin{align}\label{eq:vacuum}
Y(x)(\vac \otimes v)=v.
\end{align}

\item\label{def:creation} Creation: for all $v \in V$,
\begin{align}\label{eq:creation}
\begin{array}{cc}
Y(x)(v \otimes \vac) \in V[[x]]  \quad \text{ and} \quad 
\displaystyle \lim_{x \to 0} Y(x)(v \otimes \vac)=v.
 \end{array}
\end{align}

\item\label{def:truncation} Truncation: for any $u' \in V'$ and $v, w \in V$, we have
\[
\langle u', Y(x)(v \otimes w) \rangle \in \mc Hom^{\on{cont}}(\cc[t, t^{-1}], \cc).
\]

\item\label{def:Jacobi} Jacobi identity:
\begin{equation}\label{eq:Jacobi}
\begin{split}
\scalebox{0.92}{$ \displaystyle  x_0^{-1}\delta \left(\frac{x_1-x_2}{x_0}\right)Y(x_1)(\on{id}_V \otimes Y(x_2))-x_0^{-1}\delta \left(\frac{x_2-x_1}{-x_0}\right)Y(x_2)(\on{id}_V \otimes Y(x_1))(T^\beta \otimes \on{id}_V)$} \\[5pt]
\scalebox{0.92}{$\displaystyle =x_2^{-1}\delta \left(\frac{x_1-x_0}{x_2}\right)Y(x_2)(Y(x_0) \otimes \on{id}_V)$} 
 \end{split}
 \end{equation} 
 as  $\msf k$-linear maps $V^{\otimes 3} \longrightarrow V[[[x_0^{\pm1}, x_1^{\pm1}, x_2^{\pm1}]]$.

 \item\label{def:derivation} Derivation properties: set $D=\on{Res}_x\big(x^{-2} Y(x)   (\on{id}_V \otimes \vac)\big): V  \longrightarrow V$. Then we have
\begin{empheq}[left=\empheqlbrace]{align} 
D Y(x)-Y(x)(\on{id}_V \otimes D)  =  \frac{d}{dx}Y(x), \label{eq:derivation_1}\\[5pt]
Y(x)   (D \otimes \on{id}_V) =\frac{d}{dx}Y(x). \label{eq:derivation_2}
\end{empheq}

\delete{
\begin{align}\label{eq:derivation}
D  Y(x)-e^{xD}Y(-x)(D \otimes \on{id}_V)T^\beta= Y(x)   (D \otimes \on{id}_V)=\frac{d}{dx}Y(x),
\end{align}
with $e^{xD}=\sum_{n \geq 0}\frac{1}{n!}x^n D^n$.
}

\end{enumerate}
\end{definition}

In the remainder of the paper, we will write $\vac$ for the vacuum vector seen as an element of $V_0=\Hom_{G_\Gamma}(\cc, V)$.

We give the following lemma which is likely well-known for the experts, and we provide a proof as we could not find it in the literature.

\begin{lemma}\label{lem:truncation}
Let $V$ be a $\cc$-vector space with $V^*$ the dual vector space, and consider a sequence $\{v_n\}_{n \in \mathbb{Z}}$ of elements in $V$. Let $z$ be a formal variable. Then
\[
\sum_{n \in \mathbb{Z}}v_n z^{-n-1} \in V((z)) \Longleftrightarrow \langle w^*, \sum_{n \in \mathbb{Z}}v_n  z^{-n-1}\rangle \in \cc((z)) \text{ for all } w^* \in V^*.
\]
\end{lemma}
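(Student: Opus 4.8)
First I would unwind the two membership conditions. In the Laurent series $\sum_{n\in\mathbb Z} v_n z^{-n-1}$ the coefficient of $z^{k}$ is $v_{-k-1}$, so the condition $\sum_n v_n z^{-n-1}\in V((z))$ says precisely that only finitely many negative powers of $z$ occur, i.e. that $v_n=0$ for all sufficiently large $n$. Writing $S=\{n\in\mathbb Z : v_n\neq 0\}$, this is the assertion that $S$ is bounded above. Applying a fixed $w^*\in V^*$ term by term gives $\langle w^*,\sum_n v_n z^{-n-1}\rangle=\sum_n\langle w^*,v_n\rangle z^{-n-1}$, so the right-hand condition says that for \emph{every} $w^*$ the set $\{n : \langle w^*,v_n\rangle\neq 0\}$ is bounded above. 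Thus the lemma reduces to the purely linear-algebraic claim that $v_n=0$ for $n\gg 0$ if and only if, for each $w^*\in V^*$, one has $\langle w^*,v_n\rangle=0$ for $n\gg 0$.

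\textbf{The easy direction and the main obstacle.} The forward implication is immediate: if $v_n=0$ for $n\ge N$, then $\langle w^*,v_n\rangle=0$ for $n\ge N$ for every $w^*$ simultaneously. The content is the converse, and the subtlety there is exactly that the hypothesis furnishes, for each individual functional $w^*$, only a bound $N(w^*)$ beyond which $\langle w^*,v_n\rangle$ vanishes, with $N(w^*)$ a priori depending on $w^*$. The hard part is to promote these pointwise bounds into a single uniform one. I would argue by contradiction: assume $S$ is unbounded above and set $W=\operatorname{span}_{\cc}\{v_n : n\in S\}$, then distinguish two cases according to $\dim_{\cc} W$.

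\textbf{The finite-dimensional case (the crux).} Suppose $\dim_{\cc} W=d<\infty$. Choose a basis of $W$ and let $f_1,\dots,f_d$ be the dual basis of $W^*$, extended arbitrarily to functionals in $V^*$. By hypothesis each $f_i$ satisfies $\langle f_i,v_n\rangle=0$ for $n\ge N_i$; put $N=\max_i N_i$. Since every $v_n$ lies in $W$ and the $f_i$ separate points of $W$, for $n\ge N$ all coordinates $\langle f_i,v_n\rangle$ vanish, forcing $v_n=0$ and contradicting the unboundedness of $S$. This is the heart of the matter: here the per-functional bounds are amalgamated into a uniform one, using only that finitely many functionals suffice to detect a vector in a finite-dimensional space.

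\textbf{The infinite-dimensional case.} Suppose instead $\dim_{\cc} W=\infty$. Here I would manufacture a single offending functional. Greedily extract indices $n_1<n_2<\cdots$ in $S$ for which $v_{n_1},v_{n_2},\dots$ are linearly independent; this extraction never terminates, since if at some stage no later index in $S$ yielded a vector outside the current (finite-dimensional) span, then $\{v_n : n\in S\}$ would itself span a finite-dimensional space, contradicting $\dim_{\cc} W=\infty$. As the $v_{n_k}$ are independent, extend them to a basis of $V$ and define $w^*$ by $\langle w^*,v_{n_k}\rangle=1$ for all $k$. Then $\langle w^*,v_n\rangle=1$ for the arbitrarily large indices $n=n_k$, so $\{n : \langle w^*,v_n\rangle\neq 0\}$ is unbounded above, contradicting the hypothesis for this particular $w^*$. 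Both cases yield a contradiction, completing the converse and hence the lemma.
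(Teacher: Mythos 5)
Your proposal is correct and follows essentially the same route as the paper's proof: both split on the dimension of the span of the (tail of the) $v_n$, use a dual-basis argument to amalgamate the per-functional bounds into a uniform one in the finite-dimensional case, and in the infinite-dimensional case extract an infinite linearly independent subsequence and build a single functional equal to $1$ on it to contradict the hypothesis. The only differences are cosmetic (you span over the support $S$ rather than over $\{v_n : n\ge 0\}$, phrase the converse as one contradiction argument, and make the basis-extension step explicit), so there is nothing to add.
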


\begin{proof}
One direction is obvious. To prove the other direction, set $W=\msf k\on{-Span}\{v_n \ | \ n \geq 0\}$. 

If $\on{dim}W < +\infty$, let $\{w_1, \dots, w_k\}$ be a basis of $W$. 
Hence for any $n \geq 0$, there exist $c_i^n \in \cc$ ($1 \leq i \leq k$) such that $v_n=\sum_{i=1}^k c_i^n w_i$. 
Let $\{w_1^*, \cdots, w_k^*\}\subseteq W^*\subseteq V^*$ (by extending the basis of $W$ to that of $V$) be the dual basis of $\{w_1, \cdots, w_k\}$.  
Assume that $\langle w^*, \sum_{n \in \mathbb{Z}}v_n  z^{-n-1}\rangle \in \cc((z))$ for all $w^* \in V^*$. 
For any $1 \leq i \leq k$ we have $\sum_{n \in \mathbb{N}}c_i^n x^{-n-1}=\langle w_i^*, \sum_{n \in \mathbb{N}}v_n  z^{-n-1}\rangle \in \cc[z^{-1}]$, hence there exists $n_i \in \mathbb{N}$ such that $c_i^n=0$ for $n \geq n_i$. Set $N=\on{max}\{n_i \ | \ 1 \leq i \leq k\}$. Then for any $1 \leq i \leq k$, $c_i^n=0$ for $n \geq N$ and thus $v_n=0$ for $n \geq N$. It follows that $\sum_{n \in \mathbb{Z}}v_n z^{-n-1} \in V((z))$.

If $\on{dim}W = +\infty$, then $\sum_{n \in \mathbb{Z}}v_n z^{-n-1} \notin V((z))$. Thus there exists an infinite sequence $0 \leq n_1 < n_2 < \dots $ such that the set $\{v_{n_j}\}_{j \geq 1}$ is  linearly independent in $V$. Let $w^* \in V^*$ be such that $\langle w^*,v_{n_j}\rangle=1$ for all $j$. Then $\langle w^*, \sum_{n \in \mathbb{Z}}v_n  z^{-n-1}\rangle \notin \cc((z))$, which concludes the proof.
\end{proof}

\begin{remark}
\begin{enumerate}[leftmargin=*, wide, itemsep=5pt]

\item \label{rem:point1} Using Lemma \ref{lem:truncation} on each homogeneous component $V_\gamma$ of the $G_\Gamma$-module $V$, we see that the truncation condition \ref{def:truncation} in Definition \ref{def:va} is equivalent to saying that $Y(x)(v \otimes w) \in \mc Hom^{\on{cont}}(\cc[t, t^{-1}], V)$ for all $v, w \in V$. In particular, when $\Gamma=\{0\}$, it is equivalent to the usual truncation condition given in \cite[(3.1.2)]{Lepowsky-Li}. We could thus remove the truncation condition \ref{def:truncation} by imposing $Y(x)\in \Hom_{G_\Gamma}(V\otimes V, \mc Hom^{\on{cont}}(\cc [t,t^{-1}], V))$, but we will see in Theorem \ref{thm:duality} why our choice for \ref{def:truncation} is well-suited for this context. We also want to mention that the condition for $w*\in V^*$ cannot be weakened to $ w^*$ being in a  dense subspace. 

\delete{
The truncation condition \ref{def:truncation} is weaker than the usual truncation for vertex algebra (cf. \cite[Definition 3.1.1]{Lepowsky-Li}). Indeed, in the usual definition, the truncation condition means that the morphism $Y(x)$ has image in $\on{Hom}^{\on{cont}}(\cc[t, t^{-1}], V)$. If we use elements in $V$ to write down the condition, then for any $u, v \in V$, there exists $n_{u, v} \in \mathbb{Z}$ (depending on $u$ and $v$) such that $u_n v=0$ for $n \geq n_{u, v}$. In \ref{def:truncation}, for any $u, v \in V$ and $u' \in V'$, there exists $n_{u, v, u'} \in \mathbb{Z}$ (depending on $u$, $v$ and $u'$) such that $\langle u', u_n v\rangle =0$ for $n \geq n_{u, v, u'}$. So the integer $n_{u, v, u'}$ depends not only on $u$ and $v$, but also on $u'$.
}

\item Writing $ Y(x)=\sum_{n\in \mathbb Z}Y_n x^{-n-1}$ with $Y_n : V\otimes V\to V\otimes \cc x^{-n-1}$ being a $G_\Gamma$-module homomorphism, the Jacobi identity \eqref{eq:Jacobi} can be written in the component form

\begin{align}\label{eq:Jacobi_components}
\sum_{i \geq 0}(-1)^i \binom{l}{i} Y_{m+l-i}  (\on{id}_V  \otimes Y_{n+i})- (-1)^l & \sum_{i \geq 0} (-1)^i \binom{l}{i} Y_{n+l-i} (\on{id}_V \otimes Y_{m+i})(T^\beta \otimes \on{id}_{V} ) \notag \\
&  =\sum_{i \geq 0} \binom{m}{i} Y_{m+n-i}  (Y_{l+i} \otimes \on{id}_V)
\end{align}
for all $ l, m, n \in \mathbb Z$ (as the coefficients of $x_0^{-l-1}x_1^{-m-1}x_2^{-n-1}$).  Taking $\on{Res}_{x_0}$ (i.e., $l=0$), one gets
\begin{align}
Y_m   (\on{id}_V \otimes Y_n)-Y_n    (\on{id}_V \otimes Y_m)    (T^\beta\otimes \on{id}_V)=\sum_{i\geq 0}\binom{m}{i} Y_{m+n-i}    (Y_{i}\otimes \on{id}_V).
\end{align}
Taking $\on{Res}_{x_1}$ (i.e., $m=0$), one gets
\begin{align}
 Y_n   (Y_l\otimes \on{id}_V) =& \sum_{i\geq 0}(-1)^i\binom{l}{i}Y_{l-i}    (\on{id}_V \otimes Y_{n+i}) \notag\\
& -(-1)^l \sum_{i\geq 0}(-1)^i\binom{l}{i}Y_{n+l-i}    (\on{id}_V \otimes Y_{i})    (T^\beta \otimes \on{id}_V).
\end{align}
\item Using Point \eqref{rem:point1}, we see that if $\beta=1$ is the constant, then the axioms \ref{def:vacuum}-\ref{def:Jacobi} in Definition \ref{def:va} are the same as those of the usual definition of a vertex algebra.

\item If $\beta=0$, then $T^\beta=0$ and the Jacobi identity \eqref{eq:Jacobi} becomes
\begin{equation}\label{eq:Jacobi_beta=0}
 x_0^{-1}\delta \left(\frac{x_1-x_2}{x_0}\right)Y(x_1)(\on{id}_V \otimes Y(x_2)) =x_2^{-1}\delta \left(\frac{x_1-x_0}{x_2}\right)Y(x_2)(Y(x_0) \otimes \on{id}_V).
 \end{equation} 
 In component form, this is
 \begin{align}\label{eq:associative_components}
\sum_{i\geq 0}(-1)^i\binom{l}{i}Y_{m+l-i}   (\on{id}_V \otimes Y_{n+i})=&\sum_{i\geq 0}\binom{m}{i} Y_{m+n-i}   (Y_{l+i}\otimes \on{id}_V)
\end{align} 
for all $l, m, n \in \mathbb Z$.
Taking $\on{Res}_{x_1}$ (i.e., $m=0$), one gets
\begin{align}\label{eq:associative_components_1}
Y_n    (Y_l\otimes \on{id}_V)=\sum_{i\geq 0}(-1)^i\binom{l}{i} Y_{l-i}   (\on{id}_V \otimes Y_{n+i}),
\end{align}
 which is exactly
\begin{equation}\label{eq:associative}
Y(x_2)(Y(x_0) \otimes \on{id}_V)=Y(x_0+x_2)(\on{id}_V \otimes Y(x_2)).
 \end{equation} 
It follows that the weak associativity of the vertex algebra is in fact a strong associativity. So the case $\beta=0$ produces an associative vertex algebra. Moreover, we see that Equation \eqref{eq:associative} is exactly the associativity axiom with $N=0$ of a field algebra in \cite{Bakalov-Kac}.  
 
\delete{ 
(4) One could change the Jacobi identity to the following 
\begin{equation}\label{eq:Jacobi2}
\begin{split}
\scalebox{0.9}{$ \displaystyle  x_0^{-1}\delta \left(\frac{x_1-x_2}{x_0}\right)Y(x_1)(\on{id}_V \otimes Y(x_2))(T^\beta \otimes \on{id}_V)
-x_0^{-1}\delta \left(\frac{x_2-x_1}{-x_0}\right)Y(x_2)(\on{id}_V \otimes Y(x_1))$} \\[5pt]
\scalebox{0.9}{$\displaystyle =x_2^{-1}\delta \left(\frac{x_1-x_0}{x_2}\right)Y(x_2)(Y(x_0) \otimes \on{id}_V)$.} 
 \end{split}
 \end{equation} 
 Taking $\beta=0$, the equation \eqref{eq:Jacobi2} becomes
 \begin{align}-x_0^{-1}\delta \left(\frac{x_2-x_1}{-x_0}\right)Y(x_2)(\on{id}_V \otimes Y(x_1))=x_2^{-1}\delta \left(\frac{x_1-x_0}{x_2}\right)Y(x_2)(Y(x_0) \otimes \on{id}_V)
 \end{align}
 
 Replacing $x_0$ by $ -x_0$ we get 
 \begin{align}x_0^{-1}\delta \left(\frac{x_2-x_1}{x_0}\right)Y(x_2)(\on{id}_V \otimes Y(x_1))=x_2^{-1}\delta \left(\frac{x_1+x_0}{x_2}\right)Y(x_2)(Y(-x_0) \otimes \on{id}_V)
 \end{align}
Can one apply the same argument of the implication of (9) to (10) to get 
\begin{equation}\label{eq:associative2}
Y(x_2)(Y(x_0) \otimes \on{id}_V)=Y(-x_0+x_2)(\on{id}_V \otimes Y(x_2)).
 \end{equation}
 }

\item   Take $\on{Res}_{x_0}$ to Equation \eqref{eq:Jacobi_beta=0} (i.e., $l=0$), we get 
\begin{align}\label{eq:Huang_associative_components}
Y_m   (\on{id}_V \otimes Y_n)=\sum_{i\geq 0}\binom{m}{i} Y_{m+n-i}    (Y_{i}\otimes \on{id}_V)
\end{align}
for all $ m, n \in \mathbb Z$.  This is exactly 
\begin{align}\label{eq:Huang_associative}
Y(z_1)(\on{id}_V \otimes Y(z_2))=Y(z_2)(Y(z_1-z_2) \otimes \on{id}_V).
\end{align}
For open string vertex algebras in \cite{Huang},  the associativity condition \cite[(2.3)]{Huang} is\[
\langle v', Y(z_1)(\on{id}_V \otimes Y(z_2))(u_1 \otimes u_2 \otimes v) \rangle=\langle v', Y(z_2)(Y(z_1-z_2) \otimes \on{id}_V)(u_1 \otimes u_2 \otimes v) \rangle
\]
as rational functions in the domain $|z_1|>|z_2|>|z_1-z_2|>0$. Thus when $\beta=0$, the vertex algebra we defined is also an open string vertex algebra. Moreover, we see below that the associativity condition in a field algebra is here equivalent to that of an open string vertex algebra:

\begin{lemma} 
The Equations \eqref{eq:associative_components_1} and \eqref{eq:Huang_associative_components} are equivalent. Thus \eqref{eq:associative} and \eqref{eq:Huang_associative} are equivalent.
\end{lemma}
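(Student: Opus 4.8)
The plan is to recognise both component identities as the coefficient forms of the two generating-function associativity relations \eqref{eq:associative} and \eqref{eq:Huang_associative}, and then to prove those two relations equivalent by a single invertible change of formal variables. First I would record, as is essentially already done in the two preceding remarks, that \eqref{eq:associative_components_1} is exactly the coefficient of $x_0^{-l-1}x_2^{-n-1}$ in \eqref{eq:associative} and that \eqref{eq:Huang_associative_components} is exactly the coefficient of $z_1^{-m-1}z_2^{-n-1}$ in \eqref{eq:Huang_associative}. The extraction uses the expansion of $(x_0+x_2)^{-m-1}$ in nonnegative powers of $x_2$, namely $\sum_{j\ge 0}\binom{-m-1}{j}x_0^{-m-1-j}x_2^{j}$; matching the power $x_0^{-l-1}x_2^{-n-1}$ forces $m=l-j$ and turns the binomial into $\binom{j-l-1}{j}=(-1)^j\binom{l}{j}$, which is precisely what produces the alternating signs in \eqref{eq:associative_components_1}. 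The truncation axiom \ref{def:truncation}, via Lemma \ref{lem:truncation}, guarantees that after pairing with any $u'\in V'$ and evaluating on any $u\otimes v\otimes w$ every series in sight lies in $\cc((x))$, so that coefficient extraction is legitimate and no convergence issue arises; since the passage between a formal-distribution identity and the collection of its coefficients is a bijection, each component family is equivalent to its own generating-function identity.

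The heart of the argument is then the equivalence $\eqref{eq:associative}\Leftrightarrow\eqref{eq:Huang_associative}$, which I would establish by the substitution $z_1=x_0+x_2$, $z_2=x_2$, whose inverse is $x_0=z_1-z_2$, $x_2=z_2$. Under it the right-hand side $Y(x_0+x_2)(\on{id}_V\otimes Y(x_2))$ of \eqref{eq:associative} becomes the left-hand side $Y(z_1)(\on{id}_V\otimes Y(z_2))$ of \eqref{eq:Huang_associative}, while the left-hand side $Y(x_2)(Y(x_0)\otimes\on{id}_V)$ becomes the right-hand side $Y(z_2)(Y(z_1-z_2)\otimes\on{id}_V)$; thus the two identities are one and the same relation transported along an invertible triangular substitution. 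Because the substitution is invertible, \eqref{eq:associative} holds if and only if \eqref{eq:Huang_associative} holds, and combining this with the two coefficient-level equivalences of the previous paragraph yields the equivalence of \eqref{eq:associative_components_1} and \eqref{eq:Huang_associative_components}, hence all four assertions of the lemma.

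The step I expect to be the real obstacle is the bookkeeping of expansion domains. The relation \eqref{eq:associative} is naturally expanded in the region $|x_0|>|x_2|$, whereas Huang's relation \eqref{eq:Huang_associative} is expanded in $|z_1|>|z_2|>|z_1-z_2|$, and the substitution $z_1=x_0+x_2,\ z_2=x_2$ carries one region to essentially the opposite region in the other variables. One must therefore argue that both sides, after pairing with $u'\in V'$ and applying to $u\otimes v\otimes w$, are the Laurent expansions of one and the same rational function, so that the two domain-expansions determine each other; this is exactly where the truncation condition is used decisively, since it forces these pairings into $\cc((x))$ and hence identifies them as expansions of genuine rational functions. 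An equivalent and perhaps cleaner organisation, which I would keep as a fallback, is to pass through the common parent \eqref{eq:Jacobi_beta=0}: taking $\on{Res}_{x_1}$ and $\on{Res}_{x_0}$ recovers \eqref{eq:associative_components_1} and \eqref{eq:Huang_associative_components} respectively, and the converse reconstruction of the full delta-function identity from either single associativity formula again rests on truncation. Either way the substitution supplies the main idea and the truncation axiom supplies the analytic control; I would avoid attempting a direct binomial-inversion proof at the level of components, since the relevant sums $\sum_{j\ge 0}(-1)^j\binom{l}{j}\binom{l-j}{i}$ are genuinely infinite when $l<0$ and only the formal-variable viewpoint keeps them under control.
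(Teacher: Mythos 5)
Your strategy and the paper's proof share the same combinatorial core, just packaged at different levels: the binomial identity $\sum_{i=0}^k(-1)^{k-i}\binom{m}{i}\binom{m-i}{k-i}=\delta_{k,0}$, which the paper extracts from $z^m=((z-x)+x)^m$ and feeds into its ``routine computation'', is precisely the coefficient-by-coefficient statement that your two substitutions $z_1\mapsto x_0+x_2$ and $x_0\mapsto z_1-z_2$ are mutually inverse. Your reluctance to run the argument naively on components is also well founded: grouping $\sum_{i,j}(-1)^i\binom{l}{i}\binom{l-i}{j}(\cdots)$ by fixed $j$ does produce genuinely infinite sums when $l<0$, so the paper's computation, read literally, needs exactly the same finiteness input as your substitution argument does.

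The genuine gap is in how you supply that input. You resolve the expansion-domain obstacle by claiming that truncation ``forces these pairings into $\cc((x))$ and hence identifies them as expansions of genuine rational functions.'' That inference is false: $\cc((x))$ (and the spaces $\cc((z_2))((z_1))$ where the two-variable pairings actually live) contains far more than expansions of rational functions, and nothing in the axioms at hand yields rationality. In classical vertex algebra theory, rationality of products and iterates is a \emph{theorem} whose proof uses the full Jacobi identity, in particular weak commutativity --- exactly the ingredient that vanishes when $\beta=0$; this is why Huang takes convergence and rationality as axioms for open-string vertex algebras rather than deriving them. The same objection hits your fallback: reconstructing the two-term identity \eqref{eq:Jacobi_beta=0} from a single associativity relation is a nontrivial theorem of the type \cite[Theorem 4.4.5]{Lepowsky-Li}, not something that merely ``rests on truncation.'' So the step you yourself flag as the real obstacle is, as written, justified by an unavailable tool.

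The correct repair is purely formal and cheaper than rationality. By the remark following Lemma \ref{lem:truncation}, the truncation axiom gives $Y_n(v\otimes w)=0$ for $n\gg 0$; hence the product side $\sum_{m,n}Y_m(\on{id}_V\otimes Y_n)\,z_1^{-m-1}z_2^{-n-1}$, applied to a fixed vector, has coefficients vanishing for $n\geq N$ \emph{uniformly in} $m$. This uniform vanishing (i) makes the substitution $z_1\mapsto x_0+x_2$ well defined on the product side, (ii) is inherited by the substituted series, so the inverse substitution is well defined on the result, and (iii) keeps every rearranged double sum finite, so that grouping by $i+j=k$ is legitimate and the composite-is-identity statement reduces exactly to the paper's finite binomial identity, giving the equivalence of \eqref{eq:associative_components_1} and \eqref{eq:Huang_associative_components} and hence of \eqref{eq:associative} and \eqref{eq:Huang_associative}. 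In this corrected form your argument and the paper's become the same proof; the decisive use of truncation is uniformity of the cutoff, not rationality.
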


\delete{Then we apply the change of variables $x_2 \to x_2+x_0$, $x_0 \to x_2$ and we get
\[
\langle v', Y(x_2+x_0)(\on{id} \otimes Y(x_2))(u_1 \otimes u_2 \otimes v) \rangle=\langle v', Y(x_2)(Y(x_0) \otimes \on{id})(u_1 \otimes u_2 \otimes v) \rangle. 
\]}

\begin{proof}
By using the expansion of $z^m=((z-x)+x)^m$ for any $m\in \mathbb Z$ with $x, z$ being independent variables, one gets $\sum_{i=0}^k\binom{m}{i}\binom{m-i}{k-i}(-1)^{k-i}=\delta_{k,0} $ for all $ m\in \mathbb Z$ and $ k\geq 0$. Now a routine computation shows that Equations \eqref{eq:associative_components} and \eqref{eq:Huang_associative_components} are equivalent.
Therefore \eqref{eq:associative} and \eqref{eq:Huang_associative} are equivalent.
 \end{proof}

\end{enumerate}
\end{remark}

The map $Y(x):V \otimes V \longrightarrow V[[x, x^{-1}]]$ can be thought of as a collection of products $V \otimes V \stackrel{Y_n}{\longrightarrow} V$ indexed by the powers of $x$. The notion dual to that of an algebra product $V \otimes V \longrightarrow V$ is a coalgebra product $V \longrightarrow V \otimes V $. So in order to define a vertex coalgebra (introduced in \cite{Hub1} with more restrictions), we would want a collection of coalgebra products, i.e., a map $\Y(x):V {\longrightarrow} (V \otimes V)[[x, x^{-1}]]$. 
We can write $ \Y(x)=\sum_{n\in \mathbb Z} \Y_n x^{-n-1}$ with $\Y_n: V {\longrightarrow} (V \otimes V)$.
If we dualize the vacuum condition \eqref{eq:vacuum} in Definition \ref{def:va}, the composition will change direction and the tensor product will switch sides based on the relation between duals and tensor products. Regarding the element $\vac$ as a morphism in $\on{Hom}^{[0]}(\cc, V)$, the expression becomes $(\on{id}_{V'} \otimes \epsilon)   \Y(x)=\on{id}_{V'}$ with $\epsilon=\vac^* : V' \longrightarrow \cc$. We can proceed similarly and dualize the other axioms in Definition \ref{def:va} to obtain the definition of a $(G_\Gamma, \beta, \gamma_0)$-vertex coalgebra:

\begin{definition}\label{def:cova}
A $(G_\Gamma, \beta, \gamma_0)$-\textbf{vertex coalgebra} (over $\cc$) is a $G_\Gamma$-module $V$ equipped with a  map $\Y(x)\in \Hom_{G_\Gamma}(V, \mc Hom(\cc [t,t^{-1}], V \otimes V))$ with $\cc t=\cc_{\gamma_0}$ as $G_\Gamma$-module:
\begin{align*}
\begin{array}{cccc}
\Y(x): & V & \longrightarrow & (V \otimes V)[[x,x^{-1}]] \\
           & v & \longmapsto      &\displaystyle \sum_{n \in \mathbb{Z}}\Y_n(v) \, x^{-n-1}
\end{array}
\end{align*}
called the vertex cooperator map, and a morphism $\epsilon \in \on{Hom}^{[0]}(V, \cc)$ called the covacuum map, satisfying the following axioms:
\begin{enumerate}[leftmargin=*, itemsep=5pt, label=(\roman*)]
\item \label{def:covacuum}  Covacuum: for all $v \in V$,
\begin{align}\label{eq:covacuum}
(\on{id}_V \otimes \epsilon)\Y(x)v=v
\end{align}
\item\label{def:cocreation} Cocreation: for all $v \in V$,
\begin{align} \label{eq:cocreation}
\begin{array}{cc}
 (\epsilon \otimes \on{id}_{V} )\Y(x)v \in V[[x]]  \text{ and} \\[5pt]
 \underset{x \to 0}{\on{lim}} (\epsilon \otimes \on{id}_{V} ) \Y(x)v=v  
 \end{array}
\end{align}
\item\label{def:cotruncation} Truncation: for any $u', v' \in V'$ and $v \in V$,
\[
\langle u' \otimes v', \Y(x)(v) \rangle \in \mc Hom^{\on{cont}}(\cc[t, t^{-1}], \cc).
\]
\item\label{def:coJacobi} co-Jacobi identity:
\begin{equation}\label{eq:coJacobi}
\begin{split}
\scalebox{0.91}{$ \displaystyle x_0^{-1}\delta\left(\frac{x_1-x_2}{x_0}\right)(\Y(x_2) \otimes \on{id}_{V})\Y(x_1)- x_0^{-1}\delta\left(\frac{x_2-x_1}{-x_0}\right)(\on{id}_V\otimes T^\beta )(\Y(x_1) \otimes \on{id}_{V} )\Y(x_2)$} \\[5pt]
\scalebox{0.91}{$ \displaystyle = x_2^{-1}\delta\left(\frac{x_1-x_0}{x_2}\right)(\on{id}_V \otimes \Y(x_0))\Y(x_2).$} 
 \end{split}
 \end{equation} 
 
 \item\label{def:coderivation} Derivation properties: set $\D=\on{Res}_x\big(x^{-2} (\epsilon \otimes \on{id}_V) \Y(x)): V  \longrightarrow V$. Then we have
 \begin{empheq}[left=\empheqlbrace]{align} 
\Y(x)   \D-(\D \otimes \on{id}_V) \Y(x)=\frac{d}{dx}\Y(x), \label{eq:coderivation_1}\\[5pt]
(\on{id}_V \otimes \D) \Y(x) =\frac{d}{dx}\Y(x). \label{eq:coderivation_2}
\end{empheq}
 
\end{enumerate}
\end{definition}

\begin{remark}
The notion of $(G_\Gamma, \beta, \gamma_0)$-vertex coalgebra given above differs from the vertex operator coalgebra introduced in \cite{Hub1}. Indeed, our choice a pairing with the dual space is different so the tensor products are reversed. Our convention will make the generalisation to other categories more natural later on. Moreover, we do not consider any conformal structure on $V$.
\end{remark}

The parallel between both definitions is clear, and their relationship is made even clearer by the next result. It was proved in \cite[Theorem 2.9]{Hub2} that there is a correspondence between vertex operator algebras and vertex operator coalgebras using the duality. We extend this result to the $G_\Gamma$-module case.

\begin{theorem}\label{thm:duality}
Let $\Gamma$ be an abelian group with $\beta:\Gamma \times \Gamma \longrightarrow \cc$. Assume that $\beta$ satisfies Relation \eqref{relation_1}. The restricted dual gives a bijection
\begin{center}
\begin{tikzpicture}[baseline=(current  bounding  box.center), scale=1, transform shape, on top/.style={preaction={draw=white,-,line width=#1}},on top/.default=4pt]
\tikzset{>=stealth}

\node (1) at (-4,0) []{$
\shortstack{$\{
\text{Harish-Chandra}$ \\ $(G_\Gamma, \beta, \gamma_0)\text{-vertex algebras} 
\}$}
$};
\node (2) at (3.5,0) []{$
\shortstack{$\{
\text{Harish-Chandra}$ \\ $(G_\Gamma, \beta, \gamma_0)\text{-vertex coalgebras} 
\}$}
$};
\draw[<->]  (-0.8,0) -- node[above] {$\ (-)'$} (0.1,0);
\end{tikzpicture}
\end{center}
\end{theorem}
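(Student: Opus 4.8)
The plan is to establish a bijection by showing that the restricted dual functor $(-)'$ sends a Harish-Chandra $(G_\Gamma, \beta, \gamma_0)$-vertex algebra to a vertex coalgebra, and vice versa, with the two constructions mutually inverse (which follows automatically from the canonical isomorphism $(V')' \cong V$ on Harish-Chandra modules). So the content reduces to: given a vertex algebra $(V, Y(x), \vac)$, define $\Y(x)$ on $V'$ and verify the five coalgebra axioms.

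**First I would** define the candidate cooperator. The vertex operator is a map $Y(x) \in \Hom_{G_\Gamma}(V \otimes V, \mc Hom(\cc[t,t^{-1}], V))$, equivalently a collection $Y_n : V \otimes V \to V$. Dualizing each component via $(-)'$ and using the natural pairing $\langle \cdot, \cdot\rangle : V' \otimes V \to \cc$ together with the map $V' \otimes V' \to (V \otimes V)'$ (available from Section \ref{sec:A.1}), I would set $\Y_n : V' \to V' \otimes V'$ to be the adjoint of $Y_n$, so that
\begin{equation*}
\langle u' \otimes v', \Y_n(w') \rangle \text{-type identities pair against } \langle w', Y_n(\text{--}) \rangle.
\end{equation*}
Because $V$ is Harish-Chandra, each $V_\gamma$ is finite dimensional, so on each graded piece the finite-dimensional duality $V' \otimes V' \cong (V \otimes V)'$ is an honest isomorphism and the adjoint $\Y_n$ is well-defined as a $G_\Gamma$-homomorphism $V' \to V' \otimes V'$. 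I would then check that $\Y(x) = \sum_n \Y_n x^{-n-1}$ lands in $\mc Hom(\cc[t,t^{-1}], V' \otimes V')$ with the correct $\gamma_0$-grading, using the identification $\cc x^n = (\cc t^{-n-1})'$ developed in Section \ref{sec:A.2}.

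**Next I would** translate each vertex algebra axiom into its coalgebra counterpart by applying $\langle -, - \rangle$ and transposing. The vacuum condition \eqref{eq:vacuum}, which says $Y_n(\vac \otimes v)$ recovers $v$ appropriately, dualizes to the covacuum condition \eqref{eq:covacuum} with $\epsilon = \vac^*$, exactly as motivated in the text before Definition \ref{def:cova}; creation \eqref{eq:creation} dualizes to cocreation \eqref{eq:cocreation}; and the derivation map $D$ transposes to $\D$ giving \eqref{eq:coderivation_1}--\eqref{eq:coderivation_2}. The crucial point for the truncation axiom is that the truncation condition \ref{def:truncation} was deliberately stated using $u' \in V'$ (the restricted dual) rather than the full dual $V^*$: by Lemma \ref{lem:truncation} applied on each finite-dimensional homogeneous component, $\langle u', Y(x)(v \otimes w)\rangle \in \mc Hom^{\on{cont}}(\cc[t,t^{-1}], \cc)$ is precisely the condition that dualizes to the cotruncation \ref{def:cotruncation}, since under the pairing the roles of the evaluating functionals and the states are interchanged. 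This is exactly the compatibility foreshadowed in Remark \ref{rem:point1}, and it is why the ``weaker'' form of truncation is the correct one here.

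**The hard part will be** the Jacobi identity \eqref{eq:Jacobi} $\leftrightarrow$ \eqref{eq:coJacobi}. I expect this to be the main obstacle because dualizing a composite such as $Y(x_1)(\on{id}_V \otimes Y(x_2))$ requires carefully tracking (i) the reversal of composition order, so that $(\on{id}_V \otimes Y(x_2))$ transposes to $(\Y(x_2) \otimes \on{id}_{V'})$ on the dual side; (ii) the switch of tensor factors induced by the map $V' \otimes V' \to (V \otimes V)'$ with its built-in flip $\langle f \otimes g, u \otimes v\rangle = g(u) f(v)$; and (iii) the behavior of $T^\beta$ under transposition, which is where Relation \eqref{relation_1}, $\beta(\gamma_1, \gamma_2) = \beta(-\gamma_1, -\gamma_2)$, is needed: since $(-)'$ sends weight $\gamma$ to weight $-\gamma$, the braiding factor $\beta(|u|,|v|)$ becomes $\beta(-|u'|, -|v'|)$, and \eqref{relation_1} guarantees this equals $\beta(|u'|,|v'|)$ so that $(T^\beta)'$ is again the $T^\beta$ appearing in \eqref{eq:coJacobi}. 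Rather than expand all three terms of the $\delta$-function identity in components, I would verify the component form \eqref{eq:Jacobi_components} transposes term-by-term into the component form of \eqref{eq:coJacobi}, using that the binomial coefficients and the formal-variable bookkeeping are self-dual. Finally, I would invoke $(V')' = V$ for Harish-Chandra modules to conclude that applying $(-)'$ twice returns the original structure, giving the claimed bijection.
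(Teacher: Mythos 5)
Your proposal follows essentially the same route as the paper's proof: define $\Y(x)$ and $\epsilon=\vac^*$ by transposing $Y(x)$ and $\vac$ through the pairing (with the Harish-Chandra hypothesis ensuring the adjoint lands in $V'\otimes V'$ and that $(V')'\cong V$ closes the bijection), dualize each axiom term by term, and use Relation \eqref{relation_1} exactly where you indicate — to identify $\beta(|v_2'|,|v_3'|)=\beta(-|v_1|,-|v_2|)=\beta(|v_1|,|v_2|)$ on the homogeneous components hit by the pairing, so that the transposed $T^\beta$ term matches the one in \eqref{eq:coJacobi}. The only cosmetic difference is that the paper verifies the cotruncation directly from $V''\cong V$ rather than via Lemma \ref{lem:truncation}, and works with the generating-function form of the Jacobi identity rather than the component form \eqref{eq:Jacobi_components}, but these are interchangeable bookkeeping choices.
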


\begin{proof}
Let us prove the first direction of the equivalence. Set $(V, Y(x), \vac)$ a Harish-Chandra $(G_\Gamma, \beta, \gamma_0)$-vertex algebra and consider the homogeneous linear map
\[
\Y(x): V' \longrightarrow (V' \otimes V')[[x, x^{-1}]]
\]
defined by
\[
\langle \Y(x) u', v \otimes w \rangle = \langle  u', Y(x)(v \otimes w) \rangle
\]
for all $u' \in V'$, $v, w \in V$ and set $\epsilon:V' \longrightarrow \cc$ sending $u'$ to $u'(\vac)$. As $\on{id}_{V'} \otimes \epsilon= (\on{id}_V)^* \otimes \vac^* \cong (\vac \otimes \on{id}_V)^*$, for any $u' \in V'$ and $v \in V$, we have
\[
\begin{array}{rcl}
\langle (\on{id}_{V'} \otimes \epsilon)   \Y(x)u', v \rangle & = & \langle \Y(x)u', (\vac \otimes \on{id}_V)(v) \rangle \\[5pt]
& = & \langle u', Y(x)(\vac \otimes v) \rangle  \\[5pt]
& = & \langle u',  v \rangle \quad \quad \text{by Equation \eqref{eq:vacuum}}\\[5pt]
& = &\langle \on{id}_{V'}(u'),  v \rangle,
\end{array}
\]
which proves \eqref{eq:covacuum} of Definition \ref{def:cova}.

Moreover, we can see that for all $u' \in V'$ and $v \in V$, we have
\[
\begin{array}{rcl}
\langle (\epsilon \otimes \on{id}_{V'})   \Y(x)u', v \rangle & = & \langle \Y(x)u', (\on{id}_V \otimes \vac)(v) \rangle \\[5pt]
& = & \langle u', Y(x)(v \otimes \vac) \rangle \\[5pt]
 & = &\displaystyle  \sum_{n \leq -1} \langle u', Y_n(v \otimes \vac) \rangle x^{-n-1} \quad \quad  \text{by Equation \eqref{eq:creation}} \\[5pt]
 & \in &  \langle u' , v \rangle+x\cc[[x]].
\end{array}
\]
As the above equality is true for all $v \in V$, it follows that $(\epsilon \otimes \on{id}_{V'})   \Y(x)u' \in V'[[x]]$ and that $ \underset{x \to 0}{\on{lim}} (\epsilon \otimes \on{id}_{V'})\Y(x)u'=u'$, which proves the cocreation property \eqref{eq:cocreation}.

As $V$ is Harish-Chandra, we have $((V_\gamma)^*)^* \cong V_\gamma$ for all $\gamma \in \Gamma$, and thus there exists an isomorphism of vector spaces $V'' \cong V$. Let $u'', v'' \in V''$ and consider their respective images $u$ and $v$ by the previous isomorphism. It then follows that
\[
\begin{array}{rcl}
\langle u'' \otimes v'', \Y(x)v' \rangle & = & \langle \Y(x)v',u \otimes v \rangle \\[5pt]
& = & \langle v', Y(x)(u \otimes v) \rangle \\[5pt]
& \in &  \on{Hom}^{\on{cont}}(\cc[t, t^{-1}], \cc) \quad \quad \text{by Definition \ref{def:va}.\ref{def:truncation}}
\end{array}
\]
which proves the truncation property \ref{def:cotruncation} in Definition \ref{def:cova}.

For any $v' \in V'$ and $v_1, v_2, v_3 \in V$, we have
\begin{align}
\scalebox{0.95}{$\langle (\Y(x_2) \otimes \on{id}_{V})\Y(x_1)(v'), v_1 \otimes v_2 \otimes v_3  \rangle$} \ & \scalebox{0.95}{$= \langle \Y(x_1)(v'), (\on{id}_V \otimes Y(x_2)) (v_1 \otimes v_2 \otimes v_3) \rangle$} \nonumber \\[5pt]
& \scalebox{0.95}{$=  \langle v', Y(x_1)(\on{id}_V \otimes Y(x_2)) (v_1 \otimes v_2 \otimes v_3) \rangle.$} \label{thm:duality_eq:1}
\end{align}

For any $v_1', v_2', v_3' \in V'$ homogeneous, we have
\[
\begin{array}{rcl}
\langle (\on{id}_{V'} \otimes T^\beta)(v_1' \otimes v_2' \otimes v_3'), v_1 \otimes v_2 \otimes v_3  \rangle & = & \beta(|v_2'|, |v_3'|)\langle (v_1' \otimes v_3' \otimes v_2'), v_1 \otimes v_2 \otimes v_3  \rangle \\[5pt]
& = & \beta(|v_2'|, |v_3'|) v_1'(v_3)v_2'(v_1)v_3'(v_2) \\[5pt]
& = & \beta(|v_2'|, |v_3'|)\langle v_1' \otimes v_2' \otimes v_3', v_2 \otimes v_1 \otimes v_3  \rangle.
\end{array}
\]
We write $(\Y(x_1) \otimes \on{id}_{V'})\Y(x_2)v'=\sum_{v_1' , v_2' , v_3'}v_1' \otimes v_2' \otimes v_3'$ for the decomposition in homogeneous components in $V'^{\otimes 3}$. Then we see that
\begin{align*}
\langle (\on{id}_{V'} \otimes T^\beta)(\Y(x_1) \otimes &   \on{id}_{V'})\Y(x_2)v', v_1 \otimes v_2 \otimes v_3  \rangle   \\[5pt]
& =  \sum_{v_1' , v_2' , v_3'} \beta(|v_2'|, |v_3'|)\langle v_1' \otimes v_2' \otimes v_3', v_2 \otimes v_1 \otimes v_3  \rangle.
\end{align*}

If the above expression is not zero, then $v_2'(v_1) \neq 0 \neq v_3'(v_2)$. Hence $v_2' \in (V_{|v_1]})^*=(V')_{-|v_1|}$ and $v_3' \in (V')_{-|v_2|}$. Thus $\beta(|v_2'|, |v_3'|)=\beta(-|v_1|, -|v_2|)=\beta(|v_1|, |v_2|)$ by the assumption on $\beta$. It follows that
\begin{align}
\langle (\on{id}_{V'} \otimes T^\beta)(\Y(x_1) \otimes & \on{id}_{V'})\Y(x_2)v', v_1 \otimes v_2 \otimes v_3  \rangle  \nonumber\\[5pt]
& =  \sum_{v_1' , v_2' , v_3'} \beta(|v_1|, |v_2|) \langle v_1' \otimes v_2' \otimes v_3', v_2 \otimes v_1 \otimes v_3  \rangle  \nonumber\\[5pt]
& =  \beta(|v_1|, |v_2|)  \langle \sum_{v_1' , v_2' , v_3'} v_1' \otimes v_2' \otimes v_3', v_2 \otimes v_1 \otimes v_3  \rangle  \nonumber\\[5pt]
& =  \scalebox{0.95}{$\beta(|v_1|, |v_2|)   \langle (\Y(x_1) \otimes \on{id}_{V'}) \Y(x_2)v', v_2 \otimes v_1 \otimes v_3  \rangle$} \nonumber \\[5pt]
& =  \beta(|v_1|, |v_2|) \langle v', Y(x_2) (\on{id}_{V} \otimes Y(x_1))(v_2 \otimes v_1 \otimes v_3)  \rangle \nonumber\\[5pt]
& =   \langle v', Y(x_2) (\on{id}_{V'} \otimes Y(x_1))(T^\beta \otimes \on{id}_V)(v_1 \otimes v_2 \otimes v_3)  \rangle. \label{thm:duality_eq:2}
\end{align}

Finally, we have
\begin{align}\label{thm:duality_eq:3}
\scalebox{0.98}{$\langle (\on{id}_{V'} \otimes \Y(x_0))\Y(x_2)(v'),  v_1 \otimes v_2 \otimes v_3  \rangle   =   \langle v', Y(x_2) (Y(x_0) \otimes \on{id}_{V})(v_1 \otimes v_2 \otimes v_3 )  \rangle.$}
\end{align}
By combining Equations \eqref{thm:duality_eq:1}, \eqref{thm:duality_eq:2} and \eqref{thm:duality_eq:3} and using the fact that the Jacobi identity \eqref{eq:Jacobi} is satisfied, we see that the co-Jacobi identity \eqref{eq:coJacobi} is also satisfied. 

Set $\D=\on{Res}_x\big(x^{-2} (\epsilon \otimes \on{id}_{V'}) \Y(x)): V'  \longrightarrow V'$. Then for any $v' \in V'$ and $u \in V$, we have
 \begin{align*}
\langle  \D(v'), u \rangle&=\on{Res}_{x}x^{-2}\langle (\epsilon \otimes \on{id}_{V'}) \Y(x)(v'), u  \rangle \\
&= \on{Res}_{x}x^{-2}\langle v', Y(x_0)(\on{id}_V \otimes \vac)(u) \rangle \\
&= \langle v', D (u) \rangle.
\end{align*}
It follows that, for any $v' \in V'$ and $u,v \in V$,
\begin{align*}
\langle \big(\Y(x)   \D-(\D \otimes &\on{id}_{V'}) \Y(x)\big)(v'), u \otimes v \rangle \\
&= \langle \Y(x) \D(v'), u \otimes v \rangle - \langle (\D \otimes \on{id}_{V'}) \Y(x)(v'), u \otimes v \rangle \\
&= \langle v', DY(x)(u \otimes v) \rangle - \langle v', Y(x)(\on{id}_V \otimes D) (u \otimes v) \rangle \\
&= \langle v', \big(DY(x)-Y(x)(\on{id}_V \otimes D)\big) (u \otimes v) \rangle \\
&= \langle v', Y(x)(D \otimes \on{id}_V) (u \otimes v) \rangle \quad \text{by Equations \eqref{eq:derivation_1} and \eqref{eq:derivation_2} in Definition \ref{def:va}}\\
&= \langle (\on{id}_{V'} \otimes \D)\Y(x)(v'), u \otimes v \rangle.
\end{align*}
We thus see that
\begin{align}\label{thm:duality_eq:4}
\Y(x)   \D-(\D \otimes &\on{id}_{V'}) \Y(x)=(\on{id}_{V'} \otimes \D)\Y(x).
\end{align}
Moreover, we have
\begin{align*}
\langle (\on{id}_{V'} \otimes \D) \Y(x)(v'), u \otimes v \rangle &= \langle v', Y(x)(D \otimes \on{id}_V)(u \otimes v) \rangle  \\
&= \frac{d}{dx} \langle v', Y(x)(u \otimes v) \rangle \quad \quad \text{by Equation \eqref{eq:derivation_2} in Definition \ref{def:va}}\\
&=\langle  \frac{d}{dx} \Y(x)v', u \otimes v \rangle.
\end{align*}
We conclude that Equation \eqref{eq:coderivation_2} is satisfied. With Equations \eqref{eq:coderivation_2} and \eqref{thm:duality_eq:4}, we see that Equation \eqref{eq:coderivation_1} is also satisfied. This concludes the first direction of the equivalence. The other direction is proved in a similar fashion.
\end{proof}

Recall that for a $(G_\Gamma, \beta, \gamma_0)$-vertex algebra $V$, we defined a linear transformation $D: V \longrightarrow V$ by $D=\on{Res}_x\big(x^{-2} Y(x)   (\on{id}_V \otimes \vac)\big)$. When $\beta=1$, we get the usual vertex algebra and $D$ will satisfy the derivation property (\cite[(3.1.25)]{Lepowsky-Li}). For more general $\beta$, we have the following proposition:

\begin{proposition}\label{prop:translation}
Let $\beta: \Gamma \times \Gamma \longrightarrow \cc$ a set map and let $V$ be a $G_\Gamma$-module with $Y(x)$ satisfying \ref{def:vacuum}-\ref{def:Jacobi} in Definition \ref{def:va}. 
\begin{enumerate}
\item If $\beta(\gamma, 0)=1$ for all $\gamma \in \Gamma$, then we have
\begin{align}\label{prop:translation_eq:derivation}
Y(x)   (D \otimes \on{id}_V)=\frac{d}{dx}Y(x).
\end{align}
\item If $\beta(\gamma_1, 0)=1$ and $\beta(\gamma_1, \gamma_2)\beta(\gamma_2, \gamma_1)=1$ for all $\gamma_1, \gamma_2 \in \Gamma$, we have
\begin{align}\label{prop:translation_eq:skewsymmetry}
Y(x)=e^{xD}Y(-x)T^\beta 
\end{align}
and
\begin{align}\label{prop:translation_eq:derivationbeta}
D  Y(x)-e^{xD}Y(-x)(D \otimes \on{id}_V)T^\beta=\frac{d}{dx}Y(x).
\end{align}
\item Assuming that Equations \eqref{prop:translation_eq:skewsymmetry} and \eqref{prop:translation_eq:derivationbeta} are satisfied and that $\beta(\gamma_1, \gamma_2)=\beta(\gamma_1, \gamma_2-2\gamma_0)$ for all $\gamma_1, \gamma_2 \in \Gamma$, then
\begin{align}\label{prop:translation_eq:commutator}
D   Y(x)-Y(x) (\on{id}_V \otimes D)=\frac{d}{dx}Y(x).
\end{align}
\end{enumerate}
\end{proposition}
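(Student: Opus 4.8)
The plan is to read all three parts as a single statement: the derivation axiom \ref{def:derivation} of Definition \ref{def:va} is already forced by the vacuum, creation and Jacobi axioms once $\beta$ is suitably normalised (indeed \eqref{prop:translation_eq:derivation} is \eqref{eq:derivation_2} and \eqref{prop:translation_eq:commutator} is \eqref{eq:derivation_1}), so everything is driven by specialising the Jacobi identity \eqref{eq:Jacobi} at the vacuum. For part (1) I would evaluate \eqref{eq:Jacobi} on $u \otimes \vac \otimes v$. The vacuum axiom \eqref{eq:vacuum} collapses both $Y(x_1)(\on{id}_V \otimes Y(x_2))$ and $Y(x_2)(\on{id}_V \otimes Y(x_1))(T^\beta \otimes \on{id}_V)$ to $Y(x_1)(u \otimes v)$, the second term producing a scalar $\beta(|u|,0)$ which is killed precisely by the hypothesis $\beta(\gamma,0)=1$; the standard delta-function identity $x_0^{-1}\delta(\frac{x_1-x_2}{x_0})-x_0^{-1}\delta(\frac{x_2-x_1}{-x_0})=x_2^{-1}\delta(\frac{x_1-x_0}{x_2})$ then merges the left-hand side into a single delta. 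Writing $Y(x_0)(u \otimes \vac)=\sum_{j\geq 0}u^{(j)}x_0^j$ (bounded below by creation \eqref{eq:creation}, with $u^{(1)}=Du$), taking $\on{Res}_{x_2}$ yields the Taylor-type relation $Y(x_1)(u \otimes v)=\sum_{j\geq 0}Y(x_1-x_0)(u^{(j)}\otimes v)x_0^j$, and differentiating in $x_0$ at $x_0=0$ extracts exactly \eqref{prop:translation_eq:derivation}.

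A by-product of part (1) that I would record before part (2) is the exponential form of creation. Since \eqref{prop:translation_eq:derivation} reads $\frac{d}{dx}Y(x)=Y(x)(D\otimes\on{id}_V)$, the function $f(x)=Y(x)(w\otimes\vac)$ satisfies $f^{(k)}(x)=Y(x)(D^kw\otimes\vac)$, and evaluating at $x=0$ via \eqref{eq:creation} gives $f^{(k)}(0)=D^kw$, whence $Y(x)(w\otimes\vac)=e^{xD}w$. This holds under the single hypothesis $\beta(\gamma,0)=1$ and will be used freely below.

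For part (2) the real work is the skew-symmetry \eqref{prop:translation_eq:skewsymmetry}. I would evaluate \eqref{eq:Jacobi} on $u\otimes v\otimes\vac$ and replace each $Y(x_i)(\,\cdot\,\otimes\vac)$ by $e^{x_iD}(\cdot)$; the $T^\beta$-term then carries the scalar $\beta(|u|,|v|)$, matching the target $T^\beta$. Extracting \eqref{prop:translation_eq:skewsymmetry} is the genuine obstacle: it is the standard but delicate vertex-algebra argument (cf. \cite{Lepowsky-Li,FLM}) that combines the translation-covariance consequence $e^{x_2D}Y(x_0)(u\otimes v)=Y(x_0+x_2)(u\otimes e^{x_2D}v)$ with the commutator consequence of the Jacobi identity, and here the expansion conventions of the formal delta functions must be tracked with care. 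The involutivity hypothesis $\beta(\gamma_1,\gamma_2)\beta(\gamma_2,\gamma_1)=1$ enters exactly at this step, to guarantee that interchanging the two tensor slots twice is the identity, so that the scalar produced by the swap is consistent. Once \eqref{prop:translation_eq:skewsymmetry} is available, \eqref{prop:translation_eq:derivationbeta} is purely formal: differentiating $Y(x)=e^{xD}Y(-x)T^\beta$, using $De^{xD}=e^{xD}D$, the relation $\frac{d}{dx}Y(-x)=-Y(-x)(D\otimes\on{id}_V)$ coming from \eqref{prop:translation_eq:derivation}, and re-substituting \eqref{prop:translation_eq:skewsymmetry}, gives $\frac{d}{dx}Y(x)=DY(x)-e^{xD}Y(-x)(D\otimes\on{id}_V)T^\beta$.

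Part (3) is then again formal, the one new ingredient being a weight count. Subtracting \eqref{prop:translation_eq:commutator} from \eqref{prop:translation_eq:derivationbeta}, it suffices to prove $e^{xD}Y(-x)(D\otimes\on{id}_V)T^\beta=Y(x)(\on{id}_V\otimes D)$. The left side on $u\otimes v$ equals $\beta(|u|,|v|)\,e^{xD}Y(-x)(Dv\otimes u)$, while applying \eqref{prop:translation_eq:skewsymmetry} to $u\otimes Dv$ gives $Y(x)(\on{id}_V\otimes D)(u\otimes v)=\beta(|u|,|Dv|)\,e^{xD}Y(-x)(Dv\otimes u)$; these agree iff $\beta(|u|,|v|)=\beta(|u|,|Dv|)$. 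Here I would compute the weight $|Dv|=|v|-2\gamma_0$ from $D=\on{Res}_x(x^{-2}Y(x)(\on{id}_V\otimes\vac))$ together with $|x^n|=n\gamma_0+\gamma_0$, so that the required equality is exactly the hypothesis $\beta(\gamma_1,\gamma_2)=\beta(\gamma_1,\gamma_2-2\gamma_0)$ with $\gamma_1=|u|$ and $\gamma_2=|v|$. Thus the only substantive difficulty in the whole proposition is the delta-function bookkeeping in the skew-symmetry step of part (2); parts (1), the derivation identity of part (2), and part (3) are formal consequences of it together with the weight arithmetic.
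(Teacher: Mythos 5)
Your parts (1) and (3), and your derivation of \eqref{prop:translation_eq:derivationbeta} from skew-symmetry, are correct and essentially the paper's own argument: part (1) specialises the Jacobi identity with $\vac$ in the middle tensor slot (with $\beta(\gamma,0)=1$ killing the scalar produced by the $T^\beta$-term), and part (3) reduces to commuting $D\otimes \on{id}_V$ past $T^\beta$ via the weight count $|Dv|=|v|-2\gamma_0$, exactly as in the paper.

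The genuine gap is in the skew-symmetry step of part (2). You propose to feed in the conjugation formula $e^{x_2D}Y(x_0)(u\otimes v)=Y(x_0+x_2)(u\otimes e^{x_2D}v)$, calling it a ``translation-covariance consequence.'' But differentiating that formula at $x_2=0$ shows it is equivalent to $DY(x)-Y(x)(\on{id}_V\otimes D)=\frac{d}{dx}Y(x)$, i.e.\ to \eqref{prop:translation_eq:commutator}, the conclusion of part (3) --- which in this proposition is deduced \emph{from} skew-symmetry and requires the extra hypothesis $\beta(\gamma_1,\gamma_2)=\beta(\gamma_1,\gamma_2-2\gamma_0)$ that part (2) does not assume. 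So your argument is circular, and the ingredient you invoke is not available: part (1) gives translation covariance only in the \emph{first} slot, $Y(x)(e^{yD}\otimes\on{id}_V)=Y(x+y)$, and in the $\beta$-deformed setting \eqref{eq:derivation_2} and \eqref{eq:derivation_1} are genuinely inequivalent (this inequivalence is the whole point of the proposition's hypothesis structure). Nor can the conjugation formula be extracted directly by taking $\on{Res}_{x_1}$ of the Jacobi identity on $u\otimes v\otimes\vac$ (where, indeed, the $T^\beta$-term dies, having no negative powers of $x_1$): the would-be output $Y(x_2+x_0)(u\otimes e^{x_2D}v)$ is not a well-defined formal series, because the second argument depends on $x_2$, so each coefficient becomes an infinite sum of modes $Y_N(u\otimes D^kv)$ with $N$ and $k$ growing together, which truncation (whose bound depends on the fixed vectors) does not control. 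The paper's mechanism is different and avoids all of this: under $(T^\beta)^2=\on{id}$ --- exactly your ``double swap is the identity'' observation, but deployed elsewhere --- the \emph{left-hand side} of the Jacobi identity is invariant under $(x_0,x_1,x_2,u\otimes v)\longmapsto(-x_0,x_2,x_1,T^\beta(u\otimes v))$; hence the right-hand side satisfies the same symmetry, and combining that identity with first-slot translation covariance \eqref{prop:translation_eq:derivation}, taking $\on{Res}_{x_1}$, applying to $u\otimes v\otimes\vac$ and taking $\on{Res}_{x_2}x_2^{-1}$ yields \eqref{prop:translation_eq:skewsymmetry}. Replacing your conjugation-formula step by this symmetry argument repairs the proof.
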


\begin{proof}
(1) We compute that
\begin{align}
& Y(x_2)  (D \otimes \on{id}_V) \nonumber  \\[5pt]
& =  \on{Res}_{x_0} Y(x_2)  x_0^{-2}(Y(x_0) \otimes \on{id}_V)  ((\on{id}_V \otimes \vac) \otimes \on{id}_V) \nonumber \\[5pt]
& =  \displaystyle   \on{Res}_{x_1} \on{Res}_{x_0} x_0^{-2}x_1^{-1}\delta\left(\frac{x_2+x_0}{x_1}\right) Y(x_2)  (Y(x_0) \otimes \on{id}_V)  (\on{id}_V \otimes \vac \otimes \on{id}_V)\nonumber \\[5pt]
& =  \displaystyle   \on{Res}_{x_1} \on{Res}_{x_0} x_0^{-2}x_2^{-1}\delta\left(\frac{x_1-x_0}{x_2}\right) Y(x_2)  (Y(x_0) \otimes \on{id}_V)  (\on{id}_V \otimes \vac \otimes \on{id}_V)\nonumber \\[5pt]
&  \hfill   \text{by \cite[(2.3.17)]{Lepowsky-Li}}\nonumber \\[5pt]
 &= \displaystyle \on{Res}_{x_1} \on{Res}_{x_0} x_0^{-2}\left[x_0^{-1}\delta\left(\frac{x_1-x_2}{x_0}\right) Y(x_1) (\on{id}_V \otimes Y(x_2))\right. \nonumber\\[5pt]
& \displaystyle \quad \left. -x_0^{-1}\delta\left(\frac{x_2-x_1}{-x_0}\right)Y(x_2) (\on{id}_V \otimes Y(x_1)) (T^\beta \otimes \on{id}_V)\right] (\on{id}_V \otimes \vac \otimes \on{id}_V) \nonumber\\[5pt]
&  \hfill \text{by the Jacobi identity \eqref{eq:Jacobi}} \nonumber\\[5pt]
&= \displaystyle \on{Res}_{x_1}\Big[(x_1-x_2)^{-2}Y(x_1) (\on{id}_V \otimes Y(x_2))\nonumber \\[5pt]
& \displaystyle \quad -(x_2-x_1)^{-2}Y(x_2) (\on{id}_V \otimes Y(x_1))(T^\beta \otimes \on{id}_V)\Big](\on{id}_V \otimes \vac \otimes \on{id}_V). \label{prop:translation_eq:1}
\end{align}
But as $Y(x_2)(\vac \otimes \on{id}_V)=\on{id}_V$ by the vacuum property \eqref{eq:vacuum}, the first term in Equation \eqref{prop:translation_eq:1} reads $Y(x_1)(\on{id}_V \otimes Y(x_2)) (\on{id}_V \otimes \vac \otimes \on{id}_V)=Y(x_1)$. Moreover, as $\vac$ is a morphism $\cc \longrightarrow V$, we see that for any $v \in V_\gamma$, we have
\[
\begin{array}{rcl}
T^\beta   (\on{id}_V \otimes \vac)(v)  & = & T^\beta (v \otimes \vac) \\[5pt]
& = & \beta(\gamma, 0) \vac \otimes v  \\[5pt]
& = & \vac \otimes v \quad \quad  \text{by the assumption on } \beta \\[5pt]
& = & (\vac \otimes \on{id}_V)(v).
\end{array}
\] 
It follows that $T^\beta   (\on{id}_V \otimes \vac) = \vac \otimes \on{id}_V$, and so
\[
\begin{array}{rl}
Y(x_2) (\on{id}_V \otimes Y(x_1))& \mkern-18mu \ (T^\beta \otimes \on{id}_V)(\on{id}_V \otimes \vac \otimes \on{id}_V) \\[5pt]
& =  Y(x_2) (\on{id}_V \otimes Y(x_1))(\vac \otimes \on{id}_V \otimes \on{id}_V) \\[5pt]
& =  Y(x_2) (\vac \otimes \on{id}_V) (\on{id}_\cc \otimes Y(x_1)) \\[5pt]
& =  Y(x_1).
\end{array}
\]
Therefore Equation \eqref{prop:translation_eq:1} becomes
\[
\begin{array}{rcl}
Y(x_2)  (D \otimes \on{id}_V)& = & \displaystyle \on{Res}_{x_1}\left((x_1-x_2)^{-2}-(x_2-x_1)^{-2}\right) Y(x_1) \\[5pt]
& = & \displaystyle \on{Res}_{x_1}\left(\frac{\partial}{\partial x_2}x_2^{-1}\delta\left(\frac{x_1}{x_2}\right)\right) Y(x_1) \quad \text{by \cite[(2.3.11)]{Lepowsky-Li}}\\[5pt]
& = & \displaystyle \frac{d}{dx_2} \on{Res}_{x_1}\left( x_2^{-1}\delta \left(\frac{x_1}{x_2}\right)  Y(x_1)\right) \\[5pt]
& = & \displaystyle \frac{d}{dx_2} \on{Res}_{x_1}\left( x_2^{-1}\delta\left(\frac{x_1}{x_2}\right) Y(x_2)\right) \quad \text{by \cite[(2.1.35)]{Lepowsky-Li}}\\[5pt]
& = & \displaystyle  \frac{d}{dx_2} Y(x_2).
\end{array}
\]

(2) Under the assumption on $\beta$, the left hand side of the Jacobi identity \eqref{eq:Jacobi} applied to $u \otimes v \otimes w$ is invariant by the transformation $(x_0, x_1, x_2, u \otimes v) \longmapsto (-x_0, x_2, x_1, T^\beta(u \otimes v))$. It follows that the right hand side of \eqref{eq:Jacobi} satisfies
\begin{align}
&x_2^{-1}\delta  \left(\frac{x_1-x_0}{x_2}\right) Y(x_2)(Y(x_0) \otimes \on{id}_V) \notag\\
&=x_1^{-1}\delta \left(\frac{x_2+x_0}{x_1}\right)Y(x_1)(Y(-x_0) \otimes \on{id}_V)(T^\beta \otimes \on{id}_V) \notag\\
&=x_1^{-1}\delta \left(\frac{x_2+x_0}{x_1}\right)Y(x_2+x_0)(Y(-x_0) \otimes \on{id}_V)(T^\beta \otimes \on{id}_V)\quad \text{by \cite[(2.3.56)]{Lepowsky-Li}}. \label{prop:translation_eq:2}
\end{align}
From Equation \eqref{prop:translation_eq:derivation}, we see that $Y(x)e^{x_0 (D \otimes \on{id}_V)}=e^{x_0 \frac{d}{dx}}Y(x)=Y(x+x_0)$ by Taylor's theorem. Using \cite[(2.3.17)]{Lepowsky-Li} and taking $\on{Res}_{x_1}$ of Equation \eqref{prop:translation_eq:2}, we obtain
\begin{align}\label{prop:translation_eq:3}
Y(x_2)(Y(x_0) \otimes \on{id}_V) =Y(x_2)e^{x_0 (D \otimes \on{id}_V)}(Y(-x_0) \otimes \on{id}_V)(T^\beta \otimes \on{id}_V).
\end{align}
From the creation property \eqref{eq:creation}, we know that $Y(x)(v \otimes \vac) = v +xV[[x]]$ for all $v \in V$. It follows that for any $u, v \in V$, we have
\begin{align*}
Y(x_2)(Y(x_0) \otimes \on{id}_V)(u \otimes v \otimes \vac)&=Y(x_2)(Y(x_0)(u \otimes v) \otimes \vac) \\
&= Y(x_0)(u \otimes v)+x_2V[[x_2]][[x_0]].
\end{align*}
Likewise
\begin{align*}
Y(x_2)e^{x_0 (D \otimes \on{id}_V)}(Y(-x_0) \otimes \on{id}_V)(T^\beta \otimes \on{id}_V)&(u \otimes v \otimes \vac) \\
&=Y(x_2)(\big(e^{x_0 D}Y(-x_0)T^\beta (u \otimes v)\big) \otimes \vac) \\
& = e^{x_0 D}Y(-x_0)T^\beta(u \otimes v)+x_2V[[x_2]][[x_0]].
\end{align*}
Hence by taking $\on{Res}_{x_2}x_2^{-1}$ of Equation \eqref{prop:translation_eq:3} applied to $(u \otimes v \otimes \vac)$, we get
\begin{align}\label{prop:translation_eq:4}
Y(x)= e^{x D}Y(-x)T^\beta,
\end{align}
which can be rewritten as Equation \eqref{prop:translation_eq:skewsymmetry} by applying $T^\beta$ to the right. Then by differentiating Equation \eqref{prop:translation_eq:4} by $x$, we get
\begin{align*}
\frac{d}{dx}Y(x) &= De^{x D}Y(-x)T^\beta+e^{xD}\frac{d}{dx}(Y(-x))T^\beta \\
&=DY(x)-e^{xD}Y(-x)(D \otimes \on{id}_V)T^\beta \quad \text{by Equations \eqref{prop:translation_eq:derivation} and \eqref{prop:translation_eq:4}},
\end{align*}
which is Equation \eqref{prop:translation_eq:derivationbeta}.

\medbreak

(3) We see that for any homogeneous $u, v \in V$,
\begin{align*}
(D \otimes \on{id}_V)T^\beta(u \otimes v)&=\beta(|u|, |v|)(D(v) \otimes u) \\
&=\beta(|u|, |v|-2\gamma_0)(D(v) \otimes u) \quad \text{by the assumption on $\beta$} \\
&=\beta(|u|, |D(v)|)(D(v) \otimes u) \\
&=T^\beta (\on{id}_V \otimes D)(u \otimes v).
\end{align*}
We can thus permute $(D \otimes \on{id}_V)$ and $T^\beta$ in Equation \eqref{prop:translation_eq:derivationbeta}, and use Equation \eqref{prop:translation_eq:skewsymmetry} to obtain Equation \eqref{prop:translation_eq:commutator}.
\end{proof}

\begin{remark}
\begin{enumerate}[wide]
\item If $\beta$ satisfies the conditions in Proposition \ref{prop:translation}, then \ref{def:derivation} is not necessary to impose in Definition \ref{def:va} as it becomes a consequence of \ref{def:vacuum}-\ref{def:Jacobi}.

\item In the case $\beta=0$, the proof of the proposition shows that having Equation \eqref{eq:derivation_2} is equivalent to imposing $\on{Res}_{x_1}(x_2-x_1)^{-2} Y(x_1)=0$. It is unclear what are the requirements in order to obtain Equation \eqref{eq:derivation_1}, as it cannot come from the Jacobi identity \eqref{eq:Jacobi} anymore due to the second term on the left hand side vanishing.
\end{enumerate}
\end{remark}

A similar result can be proved for $(G_\Gamma, \beta, \gamma_0)$-vertex coalgebras. The proof is done parallel to that of Proposition \ref{prop:translation}, but the order of the operations is reversed.

\begin{proposition}\label{prop:cotranslation}
Let $\beta: \Gamma \times \Gamma \longrightarrow \cc$ a set map and let $V$ be a $G_\Gamma$-module with $\Y(\cdot, x)$ satisfying \ref{def:covacuum}- \ref{def:coJacobi} in Definition \ref{def:cova}. 
\begin{enumerate}
\item If $\beta(\gamma, 0)=1$ for all $\gamma \in \Gamma$,  then we have
\begin{align}\label{prop:cotranslation_eq:coderivation}
(\on{id}_V \otimes \D)   \Y(x)=\frac{d}{dx}\Y(x).
\end{align}
\item If $\beta(\gamma_1, 0)=1$ and $\beta(\gamma_1, \gamma_2)\beta(\gamma_2, \gamma_1)=1$ for all $\gamma_1, \gamma_2 \in \Gamma$, we have
\begin{align}\label{prop:cotranslation_eq:coskewsymmetry}
\Y(x)=T^\beta\Y(-x)e^{x\scriptsize{\D}} 
\end{align}
and
\begin{align}\label{prop:cotranslation_eq:coderivationbeta}
\Y(x) \D-T^\beta(\on{id}_V \otimes \D)\Y(-x)e^{x\scriptsize{\D}}=\frac{d}{dx}\Y(x).
\end{align}
\item\label{prop:cotranslation(3)} Assuming Equations \eqref{prop:cotranslation_eq:coskewsymmetry} and \eqref{prop:cotranslation_eq:coderivationbeta} are satisfied and that $\beta(\gamma_1, \gamma_2)=\beta(\gamma_1, \gamma_2-2\gamma_0)$ for all $\gamma_1, \gamma_2 \in \Gamma$, then
\begin{align}\label{prop:cotranslation_eq:cocommutator}
\Y(x)\D-(\D \otimes \on{id}_V)\Y(x)=\frac{d}{dx}\Y(x).
\end{align}
\end{enumerate}
\end{proposition}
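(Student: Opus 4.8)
The plan is to mirror the proof of Proposition \ref{prop:translation} line by line, systematically reversing the order of every composition and transposing each statement about one tensor factor to the opposite factor. Two dual identities drive the argument: the covacuum property \eqref{eq:covacuum}, $(\on{id}_V\otimes\epsilon)\Y(x)=\on{id}_V$, which plays the role that the vacuum property \eqref{eq:vacuum} plays in the algebra case, and the elementary relation $(\epsilon\otimes\on{id}_V)T^\beta=\on{id}_V\otimes\epsilon$, valid whenever $\beta(\gamma,0)=1$ (since $\epsilon$ is supported on $V_0$, the scalar $\beta(|u|,0)$ is trivial). These are the precise duals of $Y(x)(\vac\otimes\on{id}_V)=\on{id}_V$ and $T^\beta(\on{id}_V\otimes\vac)=\vac\otimes\on{id}_V$ used in Proposition \ref{prop:translation}.

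For part (1), I would expand $(\on{id}_V\otimes\D)\Y(x_2)$ as $\on{Res}_{x_0}\big(x_0^{-2}(\on{id}_V\otimes\epsilon\otimes\on{id}_V)(\on{id}_V\otimes\Y(x_0))\Y(x_2)\big)$, insert the trivial factor $\on{Res}_{x_1}x_2^{-1}\delta\!\big(\tfrac{x_1-x_0}{x_2}\big)=1$, and recognise $(\on{id}_V\otimes\Y(x_0))\Y(x_2)$, multiplied by its delta, as exactly the right-hand side of the co-Jacobi identity \eqref{eq:coJacobi}. Replacing it by the left-hand side and performing $\on{Res}_{x_0}$ as in \cite[(2.3.17)]{Lepowsky-Li} leaves $\on{Res}_{x_1}(\on{id}_V\otimes\epsilon\otimes\on{id}_V)$ applied to $(x_1-x_2)^{-2}(\Y(x_2)\otimes\on{id}_V)\Y(x_1)-(x_2-x_1)^{-2}(\on{id}_V\otimes T^\beta)(\Y(x_1)\otimes\on{id}_V)\Y(x_2)$. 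The covacuum property collapses the first term to $\Y(x_1)$; for the second, $(\on{id}_V\otimes\epsilon\otimes\on{id}_V)(\on{id}_V\otimes T^\beta)=\on{id}_V\otimes\on{id}_V\otimes\epsilon$ by the relation above, whence covacuum again collapses it to $\Y(x_1)$. The difference $(x_1-x_2)^{-2}-(x_2-x_1)^{-2}=\frac{\partial}{\partial x_2}x_2^{-1}\delta\!\big(\tfrac{x_1}{x_2}\big)$ from \cite[(2.3.11)]{Lepowsky-Li}, followed by $\on{Res}_{x_1}$, yields $\tfrac{d}{dx_2}\Y(x_2)$, which is \eqref{prop:cotranslation_eq:coderivation}.

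For part (2), I would exploit the invariance of the left-hand side of \eqref{eq:coJacobi} under the reversed substitution $(x_0,x_1,x_2)\mapsto(-x_0,x_2,x_1)$ combined with applying $T^\beta$ to the output, an invariance that holds precisely because $\beta(\gamma_1,\gamma_2)\beta(\gamma_2,\gamma_1)=1$. This forces the right-hand side of \eqref{eq:coJacobi} to equal its transformed counterpart. Rewriting the inner $\Y(x_1)$ as $\Y(x_2+x_0)$ by the co-analog of \cite[(2.3.56)]{Lepowsky-Li}, using $e^{x_0(\on{id}_V\otimes\D)}\Y(x)=\Y(x+x_0)$ (Taylor's theorem applied to part (1)), and taking $\on{Res}_{x_1}$ yields, as in \cite[(2.3.17)]{Lepowsky-Li}, an identity expressing $(\on{id}_V\otimes\Y(x_0))\Y(x_2)$ through $\Y(-x_0)$, $e^{x_0(\on{id}_V\otimes\D)}$ and $T^\beta$. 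Composing with $\epsilon$ on the correct slots and invoking the cocreation property \eqref{eq:cocreation} (the dual of evaluating the algebra identities on $u\otimes v\otimes\vac$), then extracting $\on{Res}_{x_2}x_2^{-1}$, produces the co-skew-symmetry \eqref{prop:cotranslation_eq:coskewsymmetry}. Differentiating in $x$ and substituting via part (1) then gives \eqref{prop:cotranslation_eq:coderivationbeta}.

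Part (3) is then purely formal: for homogeneous $u\otimes v$ one checks $T^\beta(\on{id}_V\otimes\D)=(\D\otimes\on{id}_V)T^\beta$, since $|\D(v)|=|v|-2\gamma_0$ and the hypothesis $\beta(\gamma_1,\gamma_2)=\beta(\gamma_1,\gamma_2-2\gamma_0)$ force $\beta(|u|,|\D(v)|)=\beta(|u|,|v|)$. Feeding this into \eqref{prop:cotranslation_eq:coderivationbeta} and applying \eqref{prop:cotranslation_eq:coskewsymmetry} rewrites the middle term $T^\beta(\on{id}_V\otimes\D)\Y(-x)e^{x\D}$ as $(\D\otimes\on{id}_V)\Y(x)$, giving the co-commutator \eqref{prop:cotranslation_eq:cocommutator}. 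The main obstacle is part (2): the symmetry bookkeeping of the co-Jacobi identity under $(x_0,x_1,x_2)\mapsto(-x_0,x_2,x_1)$ with $T^\beta$ acting on the \emph{output} (rather than on the input, as in the algebra case) is where the reversal of order is most delicate, and one must track carefully which of the three tensor slots $\epsilon$ annihilates at each residue extraction.
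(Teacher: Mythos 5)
Your proposal is correct and follows essentially the same route as the paper's own argument, which is precisely the dualization of Proposition \ref{prop:translation}: part (1) via inserting the delta function, invoking the co-Jacobi identity, and collapsing terms with the covacuum property together with $(\epsilon \otimes \on{id}_V)T^\beta = \on{id}_V \otimes \epsilon$; part (2) via the invariance of the left side of \eqref{eq:coJacobi} under $(x_0,x_1,x_2)\mapsto(-x_0,x_2,x_1)$ composed with $(\on{id}_V \otimes T^\beta)$ on the left, followed by Taylor's theorem, the cocreation property and $\on{Res}_{x_2}x_2^{-1}$; and part (3) via the commutation $T^\beta(\on{id}_V\otimes\D)=(\D\otimes\on{id}_V)T^\beta$ forced by $|\D(v)|=|v|-2\gamma_0$. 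The only cosmetic imprecision is writing ``applying $T^\beta$ to the output'' where the operator is really $(\on{id}_V\otimes T^\beta)$ post-composed on the left, but your final paragraph makes clear you mean exactly this.
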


\delete{
\begin{proof}
(1) We compute that
\begin{align}
&(\on{id}_V \otimes \D)\Y(x_2) \nonumber  \\[5pt]
& =  \on{Res}_{x_0} (\on{id}_V \otimes \epsilon \otimes \on{id}_V)x_0^{-2}(\on{id}_V \otimes \Y(x_0))\Y(x_2) \nonumber  \\[5pt]
& =  \displaystyle  \on{Res}_{x_1} \on{Res}_{x_0} (\on{id}_V \otimes \epsilon \otimes \on{id}_V)x_0^{-2}x_1^{-1}\delta\left(\frac{x_2+x_0}{x_1}\right)(\on{id}_V \otimes \Y(x_0))\Y(x_2) \nonumber  \\[5pt]
&= \displaystyle  \on{Res}_{x_1} \on{Res}_{x_0} (\on{id}_V \otimes \epsilon \otimes \on{id}_V)x_0^{-2}x_2^{-1}\delta\left(\frac{x_1-x_0}{x_2}\right)(\on{id}_V \otimes \Y(x_0))\Y(x_2) \nonumber \\[5pt]
& \hfill   \text{by \cite[(2.3.17)]{Lepowsky-Li}} \nonumber  \\[5pt]
& = \displaystyle \on{Res}_{x_1} \on{Res}_{x_0} (\on{id}_V \otimes \epsilon \otimes \on{id}_V)x_0^{-2}\left[x_0^{-1}\delta\left(\frac{x_1-x_2}{x_0}\right)(\Y(x_2) \otimes \on{id}_V)\Y(x_1)\right. \nonumber \\[5pt]
&\displaystyle \quad \left. -x_0^{-1}\delta\left(\frac{x_2-x_1}{-x_0}\right)(\on{id}_V \otimes T^\beta)(\Y(x_1) \otimes \on{id}_V)\Y(x_2)\right] \nonumber  \\[5pt]
 &\hfill \text{by  the co-Jacobi identity} \nonumber  \\[5pt]
&= \displaystyle \on{Res}_{x_1} (\on{id}_V \otimes \epsilon \otimes \on{id}_V)\Big[(x_1-x_2)^{-2}(\Y(x_2) \otimes \on{id}_V)\Y(x_1) \nonumber  \\[5pt]
&\displaystyle \quad -(x_2-x_1)^{-2}(\on{id}_V \otimes T^\beta)(\Y(x_1) \otimes \on{id}_V)\Y(x_2)\Big]. \label{prop:cotranslation_eq:1}
\end{align}
But $(\on{id}_V \otimes \epsilon) \Y(x_2)=\on{id}_V$ by the covacuum property \eqref{eq:covacuum}, hence $(\on{id}_V \otimes \epsilon \otimes \on{id}_V)(\Y(x_2) \otimes \on{id}_V)\Y(x_1)=\Y(x_1)$. Moreover, as $\epsilon$ is a morphism, we know that $\epsilon(V_\gamma) \neq 0$ only if $\gamma=0$. Hence for any $u, v \in V$ homogeneous,

\[
\begin{array}{rl}
(\epsilon \otimes \on{id}_V)   T^\beta (u \otimes v) & =\beta(|u|, |v|)(\epsilon \otimes \on{id}_V)(v \otimes u) \\[5pt]
& =\beta(|u|, |v|) \epsilon(v) \otimes u \\[5pt]
& =\beta(|u|, 0)  u \otimes \epsilon(v)  \quad \text{by the identification} \quad  \cc \otimes V= V \otimes \cc = V\\[5pt]
& =  u \otimes \epsilon(v)\quad \quad  \text{by the assumption on }\beta \\[5pt]
& =  (\on{id}_V \otimes \epsilon)(u \otimes v).
\end{array}
\]
It follows that $(\epsilon \otimes \on{id}_V)   T^\beta = \on{id}_V \otimes \epsilon$, and so
\[
\begin{array}{rl}
(\on{id}_V \otimes \epsilon \otimes \on{id}_V) & \mkern-18mu \ (\on{id}_V \otimes T^\beta)(\Y(x_1) \otimes \on{id}_V) \Y(x_2) \\[5pt]
& =  (\on{id}_V \otimes (\on{id}_V \otimes \epsilon))(\Y(x_1) \otimes \on{id}_V) \Y(x_2) \\[5pt]
& =  (\Y(x_1) \otimes \on{id}_1)(\on{id}_V \otimes \epsilon) \Y(x_2) \\[5pt]
& =  \Y(x_1).
\end{array}
\]
Therefore Equation \eqref{prop:cotranslation_eq:1} becomes
\[
\begin{array}{rcl}
(\on{id}_V \otimes \D)\Y(x_2)& = & \displaystyle \on{Res}_{x_1}\left((x_1-x_2)^{-2}-(x_2-x_1)^{-2}\right)\Y(x_1) \\[5pt]
& = & \displaystyle \on{Res}_{x_1}\left(\frac{\partial}{\partial x_2}x_2^{-1}\delta\left(\frac{x_1}{x_2}\right)\right)\Y(x_1) \quad \text{by \cite[(2.3.11)]{Lepowsky-Li}}\\[5pt]
& = & \displaystyle \frac{d}{dx_2} \on{Res}_{x_1}\left( x_2^{-1}\delta \left(\frac{x_1}{x_2}\right) \Y(x_1)\right) \\[5pt]
& = & \displaystyle \frac{d}{dx_2} \on{Res}_{x_1}\left( x_2^{-1}\delta\left(\frac{x_1}{x_2}\right) \Y(x_2)\right) \quad \text{by \cite[(2.1.35)]{Lepowsky-Li}}\\[5pt]
& = & \displaystyle  \frac{d}{dx_2} \Y(x_2).
\end{array}
\]

(2) Under the assumption on $\beta$, the left hand side of the co-Jacobi identity \eqref{eq:coJacobi} is invariant by the transformation $(x_0, x_1, x_2) \longmapsto (-x_0, x_2, x_1)$ and applying $(\on{id}_V \otimes T^\beta)$ on the left. It follows that the right hand side of \eqref{eq:coJacobi} satisfies
\begin{align}
&x_2^{-1}\delta  \left(\frac{x_1-x_0}{x_2}\right) (\on{id}_V \otimes \Y(x_0))\Y(x_2) \notag\\
&=x_1^{-1}\delta \left(\frac{x_2+x_0}{x_1}\right)(\on{id}_V \otimes T^\beta)(\on{id}_V \otimes \Y(-x_0))\Y(x_1) \notag\\
&=x_1^{-1}\delta \left(\frac{x_2+x_0}{x_1}\right)(\on{id}_V \otimes T^\beta)(\on{id}_V \otimes \Y(-x_0))\Y(x_2+x_0) \quad \text{by \cite[(2.3.56)]{Lepowsky-Li}}. \label{prop:cotranslation_eq:2}
\end{align}
From Equation \eqref{prop:cotranslation_eq:coderivation}, we see that $e^{x_0 (\on{id}_V \otimes \scriptsize{\D})}\Y(x)=e^{x_0 \frac{d}{dx}}\Y(x)=\Y(x+x_0)$ by Taylor's theorem. Using \cite[(2.3.17)]{Lepowsky-Li} and taking $\on{Res}_{x_1}$ of Equation \eqref{prop:cotranslation_eq:2}, we obtain
\begin{align}\label{prop:cotranslation_eq:3}
(\on{id}_V \otimes \Y(x_0))\Y(x_2) =(\on{id}_V \otimes T^\beta)(\on{id}_V \otimes \Y(-x_0))e^{x_0 (\on{id}_V \otimes \scriptsize{\D})}\Y(x_2).
\end{align}
From the cocreation property \eqref{eq:cocreation}, we know that $(\epsilon \otimes \on{id}_V)\Y(x)(v) = v +xV[[x]]$ for all $v \in V$. It follows that for any $v \in V$, we have
\begin{align*}
(\epsilon \otimes \on{id}_V \otimes \on{id}_V)(\on{id}_V \otimes \Y(x_0))\Y(x_2)(v) &=(\on{id}_\cc \otimes \Y(x_0))(\epsilon \otimes \on{id}_V)\Y(x_2)(v) \\
&= \Y(x_0)(v)+x_2(V\otimes V)[[x_2]][[x_0]].
\end{align*}
Likewise
\begin{align*}
(\epsilon \otimes \on{id}_V \otimes \on{id}_V)(\on{id}_V \otimes &T^\beta)(\on{id}_V \otimes \Y(-x_0))e^{x_0 (\on{id}_V \otimes \scriptsize{\D})}\Y(x_2)(v) \\
&=(\on{id}_\cc \otimes T^\beta)(\on{id}_\cc \otimes \Y(-x_0))e^{x_0 (\on{id}_\cc \otimes \scriptsize{\D})} (\epsilon \otimes \on{id}_V)\Y(x_2)(v) \\
& =T^\beta \Y(-x_0)e^{x_0 \scriptsize{\D}}(v)+x_2(V \otimes V)[[x_2]][[x_0]].
\end{align*}
Hence by applying $(\epsilon \otimes \on{id}_V \otimes \on{id}_V)$ to the left of Equation \eqref{prop:cotranslation_eq:3} and taking  $\on{Res}_{x_2}x_2^{-1}$, we get
\begin{align}\label{prop:cotranslation_eq:4}
\Y(x)= T^\beta \Y(-x)e^{x  \scriptsize{\D}},
\end{align}
which can be rewritten as Equation \eqref{prop:cotranslation_eq:coskewsymmetry} by applying $T^\beta$ to the left. Then by differentiating Equation \eqref{prop:cotranslation_eq:4} by $x$, we get
\begin{align*}
\frac{d}{dx}\Y(x) &= T^\beta \frac{d}{dx}(\Y(-x))e^{x  \scriptsize{\D}}+T^\beta \Y(-x) \D e^{x  \scriptsize{\D}} \\
&=\Y(x) \D-T^\beta(\on{id}_V \otimes \D) \Y(-x) e^{x  \scriptsize{\D}} \quad \text{by Equations \eqref{prop:cotranslation_eq:coderivation} and \eqref{prop:cotranslation_eq:4}},
\end{align*}
which is Equation \eqref{prop:cotranslation_eq:coderivationbeta}.

\medbreak

(3) We see that for any homogeneous $u, v \in V$,
\begin{align*}
T^\beta(\on{id}_V \otimes \D)(u \otimes v)&=\beta(|u|, |\D(v)|)(\D(v) \otimes u) \\
&=\beta(|u|, |v|-2\gamma_0)(\D(v) \otimes u)  \\
&=\beta(|u|, |v|)(\D(v) \otimes u)\quad \text{by the assumption on $\beta$} \\
&=(\D \otimes \on{id}_V)T^\beta(u \otimes v).
\end{align*}
We can thus permute $T^\beta$ and $(\on{id}_V \otimes \D)$ in Equation \eqref{prop:cotranslation_eq:coderivationbeta}, and use Equation \eqref{prop:cotranslation_eq:coskewsymmetry} to obtain Equation \eqref{prop:cotranslation_eq:cocommutator}.
\end{proof}
}

\begin{remark}\label{rem:bilinear_form}
If there exists a skew-symmetric bilinear form $\langle \cdot, \cdot \rangle$ on $\Gamma$, then the set $\{\langle \gamma_1, \gamma_0 \rangle \ | \ \gamma_1 \in \Gamma \}=l_0\mathbb{Z}$ is an ideal in $\mathbb{Z}$. Then for $\beta(\gamma_1, \gamma_2)=q^{\langle \gamma_1, \gamma_2 \rangle}$ such that for all $\gamma_1, \gamma_2 \in \Gamma$ where $q$ is a root of unity of order $2l_0$, the conditions on $\beta$ given in Propositions \ref{prop:translation} and \ref{prop:cotranslation} are satisfied. Moreover, if $(\cdot, \cdot)$ is a symmetric bilinear form, then $\beta(\gamma_1, \gamma_2)=(-1)^{(\gamma_1, \gamma_2)}$ also satisfies the aforementioned conditions. Finally, suppose that $\beta'$ and $\beta''$ both satisfy the conditions of Propositions  \ref{prop:translation} and \ref{prop:cotranslation}, then the product $\beta' \beta''$ also satisfies these conditions. In particular, it will be true in the following cases:
\begin{enumerate}
    \item the abelian group is $\mathbb{Z}$ and $\beta(m,n)=(-1)^{mn}$,
    \item the abelian group is $\Gamma \times \mathbb{Z}$ and $\beta((\gamma_1,n_1),(\gamma_2,n_2))=q^{\langle \gamma_1, \gamma_2 \rangle}(-1)^{n_1n_2}$,
    \item the abelian group is $\Gamma \times \mathbb{Z}/2\mathbb{Z}$ and $\beta((\gamma_1,n_1),(\gamma_2,n_2))=q^{\langle \gamma_1, \gamma_2 \rangle}(-1)^{n_1n_2}$.
\end{enumerate}
\end{remark}

\section{Vertex (co)algebra (co)modules}\label{sec:va_mod}

There exists a notion of module for a vertex algebra (cf. \cite[Definition 4.1.1]{Lepowsky-Li}). We reformulate this definition in the context of $(G_\Gamma, \beta, \gamma_0)$-vertex algebras.

\begin{definition}\label{def:va_mod}
Let $(V, Y(x), \vac)$ be a $(G_\Gamma, \beta, \gamma_0)$-vertex algebra. A $(G_\Gamma, \beta, \gamma_0)$-\textbf{left module} for $V$ is a $G_\Gamma$-module $M$ equipped with a  map $Y^M(x)\in \Hom_{G_\Gamma}(V\otimes M, \mc Hom(\cc [t,t^{-1}], M))$ with $\cc t=\cc_{\gamma_0}$ as $G_\Gamma$-module:
\begin{align*}
Y^M(x):  V \otimes M  \longrightarrow M[[x,x^{-1}]]
\end{align*}
satisfying the following axioms:
\begin{enumerate}[leftmargin=*, itemsep=5pt, label=(\roman*)]

\item \label{def:vacuum_mod} Vacuum:
\begin{align}\label{eq:vacuum_mod}
Y^M(x)(\vac \otimes \on{id}_{M} )=\on{id}_M.
\end{align}

\item\label{def:truncation_mod} Truncation: for any $w' \in M'$, $u \in V$, and $m \in M$, we have
\[
\langle w', Y(x)(u \otimes m) \rangle \in \mc Hom^{\on{cont}}(\cc[t, t^{-1}], \cc).
\]

\item\label{def:Jacobi_mod} Jacobi identity:
\begin{equation}\label{eq:Jacobi_mod}
\begin{split}
\scalebox{0.84}{$\displaystyle x_0^{-1}\delta \left(\frac{x_1-x_2}{x_0}\right)Y^M(x_1)(\on{id}_V \otimes Y^M(x_2))-x_0^{-1}\delta \left(\frac{x_2-x_1}{-x_0}\right)Y^M(x_2)(\on{id}_V \otimes Y^M(x_1))(T^\beta \otimes \on{id}_M)$} \\[5pt]
\scalebox{0.84}{$\displaystyle =x_2^{-1}\delta \left(\frac{x_1-x_0}{x_2}\right)Y^M(x_2)(Y(x_0) \otimes \on{id}_M)$} 
\end{split}
\end{equation}

 \item\label{def:derivation_mod} Derivation: There exists a linear map $D^M:M \longrightarrow M$ such that
\begin{empheq}[left=\empheqlbrace]{align} 
D^MY^M(x)-Y^M(x)(\on{id}_V \otimes D^M)=\frac{d}{dx}Y^M(x), \label{eq:derivation_mod_1}\\[5pt]
Y^M(x)   (D \otimes \on{id}_M) =\frac{d}{dx}Y^M(x). \label{eq:derivation_mod_2}
\end{empheq}

\end{enumerate}
\end{definition}

A notion of module for a vertex operator coalgebra was introduced in \cite{Hub2}. We do the same here for $(G_\Gamma, \beta, \gamma_0)$-vertex coalgebras:

\begin{definition}\label{def:cova_mod}
Let $(V, \Y(x), \epsilon)$ be a $(G_\Gamma, \beta, \gamma_0)$-vertex coalgebra. A $(G_\Gamma, \beta, \gamma_0)$-\textbf{right comodule} for $V$ is a $G_\Gamma$-module $M$ equipped with a  map $\Y^M(x)\in \Hom_{G_\Gamma}(M, \mc Hom(\cc [t,t^{-1}], M \otimes V))$ with $\cc t=\cc_{\gamma_0}$ as $G_\Gamma$-module:
\begin{align*}
\Y^M(x):  M  \longrightarrow  (M \otimes V)[[x,x^{-1}]]
\end{align*}
satisfying the following axioms:
\begin{enumerate}[leftmargin=*, itemsep=5pt, label=(\roman*)]
\item \label{covacuum_mod}  Covacuum:
\begin{align}\label{eq:covacuum_mod}
(\on{id}_{M} \otimes \epsilon)   \Y^M(x)=\on{id}_M.
\end{align}

\item\label{def:cotruncation_mod} Truncation: for any $v' \in V'$, $w' \in W'$ and $m \in M$,
\[
\langle v' \otimes w', \Y(x)(m) \rangle \in \mc Hom^{\on{cont}}(\cc[t, t^{-1}], \cc).
\]

\item\label{coJacobi_mod} co-Jacobi identity:
\begin{equation}\label{eq:coJacobi_mod}
\begin{split}
\scalebox{0.83}{$\displaystyle  x_0^{-1}\delta\left(\frac{x_1-x_2}{x_0}\right)(\Y^M(x_2) \otimes \on{id}_V)\Y^M(x_1)- x_0^{-1}\delta\left(\frac{x_2-x_1}{-x_0}\right)(\on{id}_M \otimes T^\beta)(\Y^M(x_1) \otimes \on{id}_V)\Y^M(x_2)$} \\[5pt]
\scalebox{0.83}{$ \displaystyle = x_2^{-1}\delta\left(\frac{x_1-x_0}{x_2}\right)(\on{id}_M \otimes\Y(x_0))\Y^M(x_2)$}.
\end{split}
\end{equation}

\item\label{def:coderivation_mod} Derivation: There exists a linear map $\D^M:M \longrightarrow M$ such that
\begin{empheq}[left=\empheqlbrace]{align} 
\Y^M(x)\D^M-(\D^M \otimes \on{id}_V)\Y^M(x)=\frac{d}{dx}\Y^M(x), \label{eq:coderivation_mod_1}\\[5pt]
(\on{id}_M \otimes \D) \Y^M(x) =\frac{d}{dx}\Y^M(x). \label{eq:coderivation_mod_2}
\end{empheq}

\end{enumerate}
\end{definition}

We can same obtain results similar to Theorem \ref{thm:duality}, Proposition \ref{prop:translation} and Proposition \ref{prop:cotranslation}. We start with the theorem:

\begin{theorem}\label{thm:duality_mod}
Let $\Gamma$ be an abelian group and $\beta: \Gamma \times \Gamma \longrightarrow \cc$ a map. Assume that $\beta$ satisfies Relation \eqref{relation_1}. If $V$ is a Harish-Chandra $(G_\Gamma, \beta, \gamma_0)$-vertex algebra, then the restricted dual gives a bijection:
\begin{center}
\begin{tikzpicture}[baseline=(current  bounding  box.center), scale=1, transform shape, on top/.style={preaction={draw=white,-,line width=#1}},on top/.default=4pt]
\tikzset{>=stealth}

\node (1) at (-4,0) [][]{$
\shortstack{$\{
\text{Harish-Chandra}$ \\ $(G_\Gamma, \beta, \gamma_0)\text{-left $V$-modules} 
\}$}
$};
\node (2) at (3.5,0) []{$
\shortstack{$\{
\text{Harish-Chandra}$ \\ $(G_\Gamma, \beta, \gamma_0)\text{-right $V'$-comodules} 
\}$}
$};
\draw[<->]  (-0.9,0) -- node[above] {$\ (-)'$} (-0.05,0);
\end{tikzpicture}
\end{center}
\end{theorem}

\begin{proof}
Let us prove the first direction of the equivalence. Consider $(V, Y(x), \vac)$ a Harish-Chandra $(G_\Gamma, \beta, \gamma_0)$-vertex algebra and set write $(V', \Y(x), \epsilon)$ for its dual vertex coalgebra. Let $(M, Y^M(x))$ be a $(G_\Gamma, \beta, \gamma_0)$-left $V$-module, and define a homogeneous linear map
\[
\Y^{M'}(x):M' \longrightarrow (M' \otimes V')[[x, x^{-1}]]
\]
by
\[
\langle \Y^{M'}(x) (w'), v \otimes m \rangle = \langle  w', Y^M(x)(v \otimes m) \rangle
\]
for any $w' \in M', m \in M$ and $v \in V$. As $\on{id}_{M'} \otimes \epsilon= (\on{id}_M)^* \otimes \vac^* \cong (\vac \otimes \on{id}_M)^*$, for any $w' \in M'$ and $m \in M$, we have
\[
\begin{array}{rcl}
\langle (\on{id}_{M'} \otimes \epsilon)   \Y^{M'}(x)w', m \rangle & = & \langle \Y^{M'}(x)u', (\vac \otimes \on{id}_M)(m) \rangle \\[5pt]
& = & \langle w', Y^M(x)(\vac \otimes \on{id}_M)(m) \rangle  \\[5pt]
& = & \langle w',  m \rangle \quad \quad \text{by Equation \eqref{eq:vacuum_mod}}\\[5pt]
& = &\langle \on{id}_{M'}(w'),  m \rangle,
\end{array}
\]
which proves \eqref{eq:covacuum_mod} of Definition \ref{def:cova_mod}.

As $M$ and $V$ are Harish-Chandra, we have $(M')' \cong M$ and $(V')' \cong V$. Let $v'' \in V''$ and $m'' \in M''$ and consider their respective images $v \in V$ and $m \in M$ the previous isomorphisms. It then follows that
\[
\begin{array}{rcl}
\langle v'' \otimes m'', \Y^{M'}(x)(w') \rangle & = & \langle \Y^{M'}(x)v', v \otimes m \rangle \\[5pt]
& = & \langle w', Y^M(x)(v \otimes m) \rangle \\[5pt]
& \in &  \on{Hom}^{\on{cont}}(\cc[t, t^{-1}], \cc) \quad \quad \text{by Definition \ref{def:va_mod}.\ref{def:truncation_mod}}
\end{array}
\]
which proves the truncation property \ref{def:cotruncation_mod} in Definition \ref{def:cova_mod}.

For any $m' \in M'$, $m \in M$ and $v_1, v_2 \in V$, we have
\begin{align}
\scalebox{0.95}{$\langle (\Y^{M'}(x_2) \otimes \on{id}_{V'})\Y^{M'}(x_1)(m'), v_1 \otimes v_2 \otimes m  \rangle$} \ & \scalebox{0.95}{$= \langle \Y^{M'}(x_1)(m'), (\on{id}_V \otimes Y^M(x_2)) (v_1 \otimes v_2 \otimes m) \rangle$} \nonumber \\[5pt]
& \scalebox{0.95}{$=  \langle m', Y^M(x_1)(\on{id}_V \otimes Y^M(x_2)) (v_1 \otimes v_2 \otimes m) \rangle.$} \label{thm:duality_mod_eq:1}
\end{align}

For any $w' \in M'$ and $v_1', v_2' \in V'$ homogeneous, we have
\[
\begin{array}{rcl}
\langle (\on{id}_{M'} \otimes T^\beta)(w' \otimes v_1' \otimes v_2'), v_1 \otimes v_2 \otimes m  \rangle & = & \beta(|v_1'|, |v_2'|)\langle (w' \otimes v_2' \otimes v_1'), v_1 \otimes v_2 \otimes m  \rangle \\[5pt]
& = & \beta(|v_1'|, |v_2'|) v_1'(v_1)v_2'(v_2)w'(m) \\[5pt]
& = & \beta(|v_1'|, |v_2'|)\langle w' \otimes v_1' \otimes v_2', v_2 \otimes v_1 \otimes m  \rangle.
\end{array}
\]
We write $(\Y^{M'}(x_1) \otimes \on{id}_{V'})\Y^{M'}(x_2)(m')=\sum_{w' , v_1' , v_2'}w' \otimes v_1' \otimes v_2'$ for the decomposition in homogeneous components in $M' \otimes V' \otimes V'$. Then we see that
\begin{align*}
\langle (\on{id}_{M'} \otimes T^\beta)(\Y^{M'}(x_1) & \otimes \on{id}_{V'})\Y^{M'}(x_2)(m'), v_1 \otimes v_2 \otimes m  \rangle   \\[5pt]
& = \sum_{w' , v_1' , v_2'} \beta(|v_1'|, |v_2'|)\langle w' \otimes v_1' \otimes v_2', v_2 \otimes v_1 \otimes m  \rangle.
\end{align*}

If the above expression is not zero, then $v_1'(v_1) \neq 0 \neq v_2'(v_2)$. Hence $v_1' \in (V_{|v_1]})^*=(V')_{-|v_1|}$ and $v_2' \in (V')_{-|v_2|}$. Thus $\beta(|v_1'|, |v_2'|)=\beta(-|v_1|, -|v_2|)=\beta(|v_1|, |v_2|)$ by the assumption on $\beta$. It follows that
\begin{align}
\langle (\on{id}_{V'} \otimes T^\beta)(\Y^{M'}(x_1)& \otimes \on{id}_{V'})\Y^{M'}(x_2)(m'), v_1 \otimes v_2 \otimes m  \rangle  \nonumber\\[5pt]
& =  \sum_{w' , v_1' , v_2'} \beta(|v_1|, |v_2|) \langle w' \otimes v_1' \otimes v_2', v_2 \otimes v_1 \otimes m  \rangle  \nonumber\\[5pt]
\delete{
& =  \beta(|v_1|, |v_2|)  \langle \sum_{w' , v_1' , v_2'} v_1' \otimes v_2' \otimes v_3', v_2 \otimes v_1 \otimes m  \rangle  \nonumber\\[5pt]}
& =  \scalebox{0.95}{$\beta(|v_1|, |v_2|)   \langle (\Y^{M'}(x_1) \otimes \on{id}_{V'})\Y^{M'}(x_2)(m'), v_2 \otimes v_1 \otimes m  \rangle$} \nonumber \\[5pt]
& =  \beta(|v_1|, |v_2|) \langle m', Y^M(x_2) (\on{id}_{V} \otimes Y^M(x_1))(v_2 \otimes v_1 \otimes m)  \rangle \nonumber\\[5pt]
& =   \langle m', Y^M(x_2) (\on{id}_{V} \otimes Y^M(x_1))(T^\beta \otimes \on{id}_M)(v_1 \otimes v_2 \otimes m)  \rangle. \label{thm:duality_mod_eq:2}
\end{align}

Finally, we have
\begin{align}\label{thm:duality_mod_eq:3}
\scalebox{0.99}{$\langle (\on{id}_{M'} \otimes \Y(x_0))\Y^{M'}(x_2)(m'),  v_1 \otimes v_2 \otimes m  \rangle   =   \langle w', Y^M(x_2) (Y(x_0) \otimes \on{id}_{M})(v_1 \otimes v_2 \otimes m)  \rangle.$}
\end{align}
By combining Equations \eqref{thm:duality_mod_eq:1}, \eqref{thm:duality_mod_eq:2} and \eqref{thm:duality_mod_eq:3} and using the fact that the Jacobi identity \eqref{eq:Jacobi_mod} is satisfied, we see that the co-Jacobi identity \eqref{eq:coJacobi_mod} is also satisfied. 

Define $\D^{M'}$ as the dual of $D^M$ given by
\begin{align*}
\langle  \D^{M'}(m'), m \rangle= \langle m', D^M(m) \rangle
\end{align*}
for all $m' \in M'$ and $m \in M$. It follows that, for any $v' \in V'$ and $u,v \in V$,
\begin{align*}
\langle \big(\Y^{M'}(x) \D^{M'}-(\D^{M'} \otimes &\on{id}_{V'}) \Y^{M'}(x)\big)(m'), v \otimes m \rangle \\
&= \langle \Y^{M'}(x) \D^{M'}(m'), m \otimes v \rangle - \langle (\D^{M'} \otimes \on{id}_{V'}) \Y^{M'}(x)(m'), v \otimes m \rangle \\
&= \langle m', D^MY^M(x)(v \otimes m) \rangle - \langle m', Y^M(x)(\on{id}_V \otimes D^M) (v \otimes m) \rangle \\
&= \langle m', \big(D^MY^M(x)-Y^M(x)(\on{id}_V \otimes D^M)\big) (v \otimes m) \rangle \\
&= \langle m', Y^M(x)(D \otimes \on{id}_M) (v \otimes m) \rangle \quad \text{by Equations \eqref{eq:derivation_mod_1} and \eqref{eq:derivation_mod_2} in Definition \ref{def:va_mod}}\\
&= \langle (\on{id}_{M'} \otimes \D)\Y^{M'}(x)(m'), v \otimes m \rangle.
\end{align*}
We thus see that
\begin{align}\label{thm:duality_mod_eq:4}
\Y^{M'}(x) \D^{M'}-(\D^{M'} \otimes &\on{id}_{V'}) \Y^{M'}(x)=(\on{id}_{M'} \otimes \D)\Y^{M'}(x).
\end{align}
Moreover, we have
\begin{align*}
\langle (\on{id}_{M'} \otimes \D) \Y^{M'}(x)(m'), v \otimes m \rangle &= \langle m', Y^M(x)(D \otimes \on{id}_M)(v \otimes m) \rangle  \\
&= \frac{d}{dx} \langle m', Y^M(x)(v \otimes m) \rangle \quad \text{by Equation \eqref{eq:derivation_mod_2} in Definition \ref{def:va_mod}}\\
&=\langle  \frac{d}{dx} \Y^{M'}(x)(m'), v \otimes m \rangle.
\end{align*}
We conclude that Equation \eqref{eq:coderivation_mod_2} is satisfied. With Equations \eqref{eq:coderivation_mod_2} and \eqref{thm:duality_mod_eq:4}, we see that Equation \eqref{eq:coderivation_mod_1} is also satisfied. This concludes the first direction of the equivalence. The other direction is proved in a similar fashion.
\end{proof}

\begin{proposition}\label{prop:translation_mod}
Let $V$ be a $(G_\Gamma, \beta, \gamma_0)$-vertex algebra and $M$ a $G_\Gamma$-module satisfying \ref{def:vacuum_mod}-\ref{def:Jacobi_mod} in Definition \ref{def:va_mod}. 
\begin{enumerate}
\item Assume that $\beta(\gamma, 0)=1$ for all $\gamma \in \Gamma$. Then
\begin{align*}
Y^M(x)   (D \otimes \on{id}_M)=\frac{d}{dx}Y^M(x).
\end{align*}

\item\label{prop:translation_mod(2)} Assume that $\beta(\gamma, 0)=1$ and that $\beta(-2\gamma_0, \gamma)=1$ for all $\gamma \in \Gamma$. Assume also that there exists $\omega \in V$ such that $D=Y_0(\omega \otimes \on{id}_V)$. Set $D^M=Y_0^M(\omega \otimes \on{id}_M)$. Then
\begin{align*}
D^MY^M(x)-Y^M(x)(\on{id}_V \otimes D^M)=\frac{d}{dx}Y^M(x).
\end{align*}

\end{enumerate}
\end{proposition}

\begin{proof}
(1) The proof follows the same reasoning as the first point of Proposition \ref{prop:translation}.
\delete{
We compute that
\begin{align}
& Y^M(x_2)  (D \otimes \on{id}_M) \nonumber  \\[5pt]
& =  \on{Res}_{x_0} Y^M(x_2)  x_0^{-2}(Y(x_0) \otimes \on{id}_M)  ((\on{id}_V \otimes \vac) \otimes \on{id}_M) \nonumber \\[5pt]
& =  \displaystyle   \on{Res}_{x_1} \on{Res}_{x_0} x_0^{-2}x_1^{-1}\delta\left(\frac{x_2+x_0}{x_1}\right) Y^M(x_2)  (Y(x_0) \otimes \on{id}_M)  (\on{id}_V \otimes \vac \otimes \on{id}_M)\nonumber \\[5pt]
& =  \displaystyle   \on{Res}_{x_1} \on{Res}_{x_0} x_0^{-2}x_2^{-1}\delta\left(\frac{x_1-x_0}{x_2}\right) Y^M(x_2)  (Y(x_0) \otimes \on{id}_M)  (\on{id}_V \otimes \vac \otimes \on{id}_M)\nonumber \\[5pt]
&  \hfill   \text{by \cite[(2.3.17)]{Lepowsky-Li}}\nonumber \\[5pt]
 &= \displaystyle \on{Res}_{x_1} \on{Res}_{x_0} x_0^{-2}\left[x_0^{-1}\delta\left(\frac{x_1-x_2}{x_0}\right) Y^M(x_1) (\on{id}_V \otimes Y^M(x_2))\right. \nonumber\\[5pt]
& \displaystyle \quad \left. -x_0^{-1}\delta\left(\frac{x_2-x_1}{-x_0}\right)Y^M(x_2) (\on{id}_V \otimes Y^M(x_1)) (T^\beta \otimes \on{id}_M)\right] (\on{id}_V \otimes \vac \otimes \on{id}_M) \nonumber\\[5pt]
&  \hfill \text{by the Jacobi identity \eqref{eq:Jacobi_mod}} \nonumber\\[5pt]
&= \displaystyle \on{Res}_{x_1}\Big[(x_1-x_2)^{-2}Y^M(x_1) (\on{id}_V \otimes Y^M(x_2))\nonumber \\[5pt]
& \displaystyle \quad -(x_2-x_1)^{-2}Y^M(x_2) (\on{id}_V \otimes Y^M(x_1))(T^\beta \otimes \on{id}_M)\Big](\on{id}_V \otimes \vac \otimes \on{id}_M). \label{prop:translation_mod_eq:1}
\end{align}
But as $Y^M(x_2)(\vac \otimes \on{id}_M)=\on{id}_M$ by the vacuum property \eqref{eq:vacuum_mod}, the first term in Equation \eqref{prop:translation_mod_eq:1} reads $Y^M(x_1)(\on{id}_V \otimes Y^M(x_2)) (\on{id}_V \otimes \vac \otimes \on{id}_M)=Y^M(x_1)$. Moreover, as $T^\beta   (\on{id}_V \otimes \vac) = \vac \otimes \on{id}_V$ (cf. the proof of Proposition \ref{prop:translation}), it follows that
\[
\begin{array}{rl}
Y^M(x_2) (\on{id}_V \otimes Y^M(x_1))& \mkern-18mu \ (T^\beta \otimes \on{id}_M)(\on{id}_V \otimes \vac \otimes \on{id}_M) \\[5pt]
& =  Y^M(x_2) (\on{id}_V \otimes Y^M(x_1))(\vac \otimes \on{id}_V \otimes \on{id}_M) \\[5pt]
& =  Y^M(x_2) (\vac \otimes \on{id}_M) (\on{id}_\cc \otimes Y^M(x_1)) \\[5pt]
& =  Y^M(x_1).
\end{array}
\]
Therefore Equation \eqref{prop:translation_mod_eq:1} becomes
\[
\begin{array}{rcl}
Y^M(x_2)  (D \otimes \on{id}_M)& = & \displaystyle \on{Res}_{x_1}\left((x_1-x_2)^{-2}-(x_2-x_1)^{-2}\right) Y^M(x_1) \\[5pt]
& = & \displaystyle \on{Res}_{x_1}\left(\frac{\partial}{\partial x_2}x_2^{-1}\delta\left(\frac{x_1}{x_2}\right)\right) Y^M(x_1) \quad \text{by \cite[(2.3.11)]{Lepowsky-Li}}\\[5pt]
& = & \displaystyle \frac{d}{dx_2} \on{Res}_{x_1}\left( x_2^{-1}\delta \left(\frac{x_1}{x_2}\right)  Y^M(x_1)\right) \\[5pt]
& = & \displaystyle \frac{d}{dx_2} \on{Res}_{x_1}\left( x_2^{-1}\delta\left(\frac{x_1}{x_2}\right) Y^M(x_2)\right) \quad \text{by \cite[(2.1.35)]{Lepowsky-Li}}\\[5pt]
& = & \displaystyle  \frac{d}{dx_2} Y^M(x_2).
\end{array}
\]
}

(2) For any $v \in V$ and $m \in M$, in the Jacobi identity \eqref{eq:Jacobi_mod} applied to $\omega \otimes v \otimes m$, take $\on{Res}_{x_1}\on{Res}_{x_0}$ and use \cite[(2.3.17)]{Lepowsky-Li}. We obtain
\[
Y^M_0\left(\omega \otimes Y^M(x_2)(v \otimes m)\right)-\beta(|\omega |, |v|)Y^M(x_2)\left(v \otimes Y^M_0(\omega \otimes m)\right)=Y^M(x_2)(Y_0(\omega \otimes v) \otimes m)
\]
which is equivalent to
\[
D^MY^M(x_2)(v \otimes m)-\beta(-2\gamma_0, |v|)Y^M(x_2)(\on{id}_V \otimes D^M)(v \otimes m)=Y^M(x_2)(D \otimes \on{id}_M) (v \otimes m)
\]
as $|\omega|=|Y_0(\omega \otimes v)|+|x^{-1}|-|v|=|D(v)|+|x^{-1}|-|v|=|x^{-1}|-|x|=-2\gamma_0$. Hence by the assumption on $\beta$, we obtain
\[
D^MY^M(x_2)(v \otimes m)-Y^M(x_2)(\on{id}_V \otimes D^M)(v \otimes m)=Y^M(x_2)(D \otimes \on{id}_M) (v \otimes m)
\]
for all $v \in V$ and $m \in M$. Finally, using the fact that Equation \eqref{eq:derivation_mod_2} was proved in the first point, we conclude that Equation \eqref{eq:derivation_mod_1} is also satisfied.
\end{proof}

A similar result can be proved for $(G_\Gamma, \beta, \gamma_0)$-vertex coalgebras:

\begin{proposition}\label{prop:cotranslation_mod}
Let $V$ be a $(G_\Gamma, \beta, \gamma_0)$-vertex coalgebra and $M$ a $G_\Gamma$-module satisfying \ref{covacuum_mod}-\ref{coJacobi_mod} in Definition \ref{def:cova_mod}. 
\begin{enumerate}[wide]
\item Assume that $\beta(\gamma, 0)=1$ for all $\gamma \in \Gamma$. Then
\begin{align*}
(\on{id}_M \otimes \D)   \Y^M(x)=\frac{d}{dx}\Y^M(x).
\end{align*}

\item Assume that $\beta(\gamma, 0)=1$ and that $\beta(2\gamma_0, \gamma)=1$ for all $\gamma \in \Gamma$. Assume also that there exists $\rho \in \on{Hom}(V, \cc)$ such that $\D=(\on{id}_V \otimes \rho)\Y_0$. Set $\D^M=(\on{id}_M \otimes \rho)\Y^M_0$. Then
\begin{align*}
\Y^M(x)\D^M-(\D^M \otimes \on{id_V})\Y^M(x)=\frac{d}{dx}\Y^M(x).
\end{align*}

\end{enumerate}
\end{proposition}

\begin{proof}
(1) The proof follows the same reasoning as the first point of Proposition \ref{prop:cotranslation}.

\delete{
We compute that
\begin{align}
&(\on{id}_M \otimes \D)\Y^M(x_2) \nonumber  \\[5pt]
& =  \on{Res}_{x_0} (\on{id}_M \otimes \epsilon \otimes \on{id}_V)x_0^{-2}(\on{id}_M \otimes \Y(x_0))\Y^M(x_2) \nonumber  \\[5pt]
& =  \displaystyle  \on{Res}_{x_1} \on{Res}_{x_0} (\on{id}_M \otimes \epsilon \otimes \on{id}_V)x_0^{-2}x_1^{-1}\delta\left(\frac{x_2+x_0}{x_1}\right)(\on{id}_M \otimes \Y(x_0))\Y^M(x_2) \nonumber  \\[5pt]
&= \displaystyle  \on{Res}_{x_1} \on{Res}_{x_0} (\on{id}_M \otimes \epsilon \otimes \on{id}_V)x_0^{-2}x_2^{-1}\delta\left(\frac{x_1-x_0}{x_2}\right)(\on{id}_M \otimes \Y(x_0))\Y^M(x_2) \nonumber \\[5pt]
& \hfill   \text{by \cite[(2.3.17)]{Lepowsky-Li}} \nonumber  \\[5pt]
& = \displaystyle \on{Res}_{x_1} \on{Res}_{x_0} (\on{id}_M \otimes \epsilon \otimes \on{id}_V)x_0^{-2}\left[x_0^{-1}\delta\left(\frac{x_1-x_2}{x_0}\right)(\Y^M(x_2) \otimes \on{id}_V)\Y^M(x_1)\right. \nonumber \\[5pt]
&\displaystyle \quad \left. -x_0^{-1}\delta\left(\frac{x_2-x_1}{-x_0}\right)(\on{id}_M \otimes T^\beta)(\Y^M(x_1) \otimes \on{id}_V)\Y^M(x_2)\right] \nonumber  \\[5pt]
 &\hfill \text{by  the co-Jacobi identity \eqref{eq:coJacobi_mod}} \nonumber  \\[5pt]
&= \displaystyle \on{Res}_{x_1} (\on{id}_M \otimes \epsilon \otimes \on{id}_V)\Big[(x_1-x_2)^{-2}(\Y^M(x_2) \otimes \on{id}_V)\Y^M(x_1) \nonumber  \\[5pt]
&\displaystyle \quad -(x_2-x_1)^{-2}(\on{id}_M \otimes T^\beta)(\Y^M(x_1) \otimes \on{id}_V)\Y^M(x_2)\Big]. \label{prop:cotranslation_mod_eq:1}
\end{align}
But $(\on{id}_M \otimes \epsilon) \Y^M(x_2)=\on{id}_M$ by the covacuum property \eqref{eq:covacuum_mod}, and therefore $(\on{id}_M \otimes \epsilon \otimes \on{id}_V)(\Y^M(x_2) \otimes \on{id}_V)\Y^M(x_1)=\Y^M(x_1)$. Moreover, we know that $(\epsilon \otimes \on{id}_V)   T^\beta = \on{id}_V \otimes \epsilon$, and so
\[
\begin{array}{rl}
(\on{id}_M \otimes \epsilon \otimes \on{id}_V) & \mkern-18mu \ (\on{id}_M \otimes T^\beta)(\Y^M(x_1) \otimes \on{id}_V) \Y^M(x_2) \\[5pt]
& =  (\on{id}_M \otimes (\on{id}_V \otimes \epsilon))(\Y^M(x_1) \otimes \on{id}_V) \Y^M(x_2) \\[5pt]
& =  (\Y^M(x_1) \otimes \on{id}_1)(\on{id}_M \otimes \epsilon) \Y^M(x_2) \\[5pt]
& =  \Y^M(x_1).
\end{array}
\]
Therefore Equation \eqref{prop:cotranslation_mod_eq:1} becomes
\[
\begin{array}{rcl}
(\on{id}_M \otimes \D)\Y^M(x_2)& = & \displaystyle \on{Res}_{x_1}\left((x_1-x_2)^{-2}-(x_2-x_1)^{-2}\right)\Y^M(x_1) \\[5pt]
& = & \displaystyle \on{Res}_{x_1}\left(\frac{\partial}{\partial x_2}x_2^{-1}\delta\left(\frac{x_1}{x_2}\right)\right)\Y^M(x_1) \quad \text{by \cite[(2.3.11)]{Lepowsky-Li}}\\[5pt]
& = & \displaystyle \frac{d}{dx_2} \on{Res}_{x_1}\left( x_2^{-1}\delta \left(\frac{x_1}{x_2}\right) \Y^M(x_1)\right) \\[5pt]
& = & \displaystyle \frac{d}{dx_2} \on{Res}_{x_1}\left( x_2^{-1}\delta\left(\frac{x_1}{x_2}\right) \Y^M(x_2)\right) \quad \text{by \cite[(2.1.35)]{Lepowsky-Li}}\\[5pt]
& = & \displaystyle  \frac{d}{dx_2} \Y^M(x_2).
\end{array}
\]
}

(2) Apply $(\on{id}_M \otimes \on{id}_V \otimes \rho)$ to the co-Jacobi identity \eqref{eq:coJacobi_mod} applied to $m \in M$, take $\on{Res}_{x_1}\on{Res}_{x_0}$ and use \cite[(2.3.17)]{Lepowsky-Li}. We obtain
\begin{equation}\label{prop:cotranslation_mod_eq:2}
\begin{split}
(\Y^M(x_2) \otimes \on{id}_\cc)(\on{id}_M \otimes \rho)\Y^M_0(m)-(\on{id}_M \otimes \on{id}_V \otimes \rho)(\on{id}_M \otimes T^\beta)(\Y_0^M \otimes \on{id}_V)\Y^M(x_2)(m) \\
=(\on{id}_M \otimes \left((\on{id}_V \otimes \rho)\Y_0\right))\Y^M(x_2)(m).
\end{split}
\end{equation}
We write $\Y^M(x_2)(m)=\sum_n \Y^M_n(m)x_2^{-n-1}=\sum_n \sum_i m_{n;1, i} \otimes m_{n;2, i}x_2^{-n-1}$ (notice that $m_{n;2, i} \in V$). The second term on the left hand side of Equation \eqref{prop:cotranslation_mod_eq:2} can be written as
\begin{align*}
&(\on{id}_M \otimes \on{id}_V \otimes \rho)(\on{id}_M \otimes T^\beta)(\Y_0^M \otimes \on{id}_V)\Y^M(x_2)(m) \\
&=\sum_n\sum_i (\on{id}_M \otimes \on{id}_V \otimes \rho)(\on{id}_M \otimes T^\beta)(\Y_0^M \otimes \on{id}_V) (m_{n;1, i} \otimes m_{n; 2, i})x_2^{-n-1} \\
&=\sum_n\sum_i\sum_j (\on{id}_M \otimes \on{id}_V \otimes \rho)(\on{id}_M \otimes T^\beta) \big((m_{n;1, i})_{0; 1, j} \otimes (m_{n;1, i})_{0; 2, j} \otimes m_{n; 2, i}\big)x_2^{-n-1} \\
&=\sum_n\sum_i\sum_j \beta(|(m_{n;1, i})_{0; 2, j}|, |m_{n; 2, i}|) (\on{id}_M \otimes \on{id}_V \otimes \rho) \big((m_{n;1, i})_{0; 1, j} \otimes  m_{n; 2, i} \otimes (m_{n;1, i})_{0; 2, j}\big)x_2^{-n-1} \\
&=\sum_n\sum_i\sum_j \beta(|(m_{n;1, i})_{0; 2, j}|, |m_{n; 2, i}|) \big((m_{n;1, i})_{0; 1, j} \otimes  m_{n; 2, i} \otimes \rho((m_{n;1, i})_{0; 2, j})\big)x_2^{-n-1} \\
&=\sum_n\sum_i\sum_j \beta(|\rho((m_{n;1, i})_{0; 2, j})|+2\gamma_0, |m_{n; 2, i}|) \big((m_{n;1, i})_{0; 1, j} \otimes \rho((m_{n;1, i})_{0; 2, j}) \otimes  m_{n; 2, i} \big)x_2^{-n-1}
\end{align*}
as $|\rho(v)|=|v|-|x|=|v|-2\gamma_0$ for any $v \in V$. But if $\rho(v) \neq 0$, then $|\rho(v)|=0$, and so
\begin{align*}
&(\on{id}_M \otimes \on{id}_V \otimes \rho)(\on{id}_M \otimes T^\beta)(\Y_0^M \otimes \on{id}_V)\Y^M(x_2)(m) \\
&=\sum_n\sum_i\sum_j \beta(2\gamma_0, |m_{n; 2, i}|) \big((m_{n;1, i})_{0; 1, j} \otimes \rho((m_{n;1, i})_{0; 2, j}) \otimes  m_{n; 2, i} \big)x_2^{-n-1} \\
&=\sum_n\sum_i\sum_j  \big((m_{n;1, i})_{0; 1, j} \otimes \rho((m_{n;1, i})_{0; 2, j}) \otimes  m_{n; 2, i} \big)x_2^{-n-1} \quad \text{by the assumption on $\beta$}\\
&= \big((\on{id}_M \otimes \rho)\Y_0^M \otimes \on{id}_V\big) \Y^M(x_2)(m)\\
\end{align*}
We can thus rewrite Equation \eqref{prop:cotranslation_mod_eq:2} as 
\begin{equation}\label{prop:cotranslation_mod_eq:3}
\begin{split}
\Y^M(x_2)(\on{id}_M \otimes \rho)\Y^M_0(m)-\big((\on{id}_M \otimes \rho)\Y_0^M \otimes \on{id}_V\big) \Y^M(x_2)(m)\\
=(\on{id}_M \otimes \left((\on{id}_V \otimes \rho)\Y_0\right))\Y^M(x_2)(m).
\end{split}
\end{equation}
Then using the facts that $\D=(\on{id}_V \otimes \rho)\Y_0$ and that $\D^M=(\on{id}_M \otimes \rho)\Y^M_0$, Equation \eqref{prop:cotranslation_mod_eq:3} becomes
\begin{equation}\label{prop:cotranslation_mod_eq:4}
\Y^M(x_2)\D^M(m)-\big(\D^M \otimes \on{id}_V\big) \Y^M(x_2)(m)=(\on{id}_M \otimes \D )\Y^M(x_2)(m).
\end{equation}
Finally, we know that Equation \eqref{eq:coderivation_mod_2} is satisfied, and thus Equation \eqref{prop:cotranslation_mod_eq:4} becomes equivalent to Equation \eqref{eq:coderivation_mod_1}, which concludes the proof.
\end{proof}

\begin{remark}\label{rem:bilinear_form_mod}
As it was the case in Remark \ref{rem:bilinear_form} in the vertex algebra/coalgebra context, if there exists a skew-symmetric or symmetric bilinear form on $\Gamma$, we can construct $\beta$ satisfying the conditions given in Propositions \ref{prop:translation_mod} and \ref{prop:cotranslation_mod}.
\end{remark}

\section{The $C_2$-algebra and its associated modules}\label{sec:C2}
\subsection{The structure of the $C_2$-algebra}
Given a vertex algebra $V$, Zhu proved in \cite{Zhu} that the quotient $R(V)=V/C_2(V)$ with $C_2(V)=\on{Span}\left\{u_{-2}v \ | \ u, v \in V \right\}$ is a Poisson algebra where the Poisson structure is given by:
\[
\begin{array}{ccc}
\overline{u}. \overline{v}& =&\overline{u_{-1}v}, \\[5pt]
\{\overline{u},\overline{v}\}& =&\overline{u_{0}v}.
\end{array}
\]
We can obtain something similar in the $G_\Gamma$-module context.

Let $V$ be $(G_\Gamma, \beta, \gamma_0)$-vertex algebra and set $C_2(V)=\on{Im}(Y_{-2})$. Consider the space
\[
R(V)=\on{CoKer}Y_{-2}=V/C_2(V). 
\]
It is a $G_\Gamma$-module because $|Y_{-2}|=-|x|=-2\gamma_0$.

We know from \ref{def:derivation} in Definition \ref{def:va} that $Y_n   (D \otimes \on{id}_V)=-n Y_{n-1}$ for all $n \in \mathbb{Z}$, so we have
\begin{align}\label{eq:Y_{-2-n}}
Y_{-2-n}=\frac{1}{(n+1)!} Y_{-2}   (D \otimes \on{id}_V)^n
\end{align}
for all $n \geq 0$.  This leads to the following lemma:

\begin{lemma}\label{lem:Y_{-2-n}}
Let $V$ be $(G_\Gamma, \beta, \gamma_0)$-vertex algebra. Then $\on{Im}(Y_{-2-n}) \subseteq C_2(V)$ for all $n \geq 0$.
\end{lemma}

\begin{definition}\label{def:algebra}
A \textbf{$G_\Gamma$-algebra} is a triple $(A, \varpi, \mathbf{1})$ such that $A$ is a $G_\Gamma$-module, and $\varpi: A \otimes A \longrightarrow A$ (product) and $\mathbf{1}: \cc \longrightarrow A$ (unit) are homogeneous linear maps satisfying:
\begin{enumerate}[itemsep=5pt]
\item $\varpi   (\on{id}_A \otimes \varpi) = \varpi   (\varpi \otimes \on{id}_A) $ (associativity).

\item $\varpi   (\on{id}_A \otimes \mathbf{1}) =\on{id}_A=\varpi   (\mathbf{1} \otimes \on{id}_A)$ (unit).
\end{enumerate}
A $G_\Gamma$-algebra $(A, \varpi, \mathbf{1})$ is $\beta$-\textbf{commutative} if $\varpi   T^\beta=\varpi$.
\end{definition}

\begin{remark}
Notice that in the commutativity statement, $T^\beta$ is not the braiding. So we use the term $\beta$-commutativity to avoid confusion with the usual commutativity involving the braiding.
\end{remark}

The Jacobi identity \eqref{eq:Jacobi_components} with $l=0$, $m=-1$, $n=-2$ and with $l=-2$, $m=1$, $n=-2$ lead to 
\[
\left\{\begin{array}{lcl}
Y_{-1}   ( \on{id}_V \otimes Y_{-2}) \mod C_2(V)& = & 0, \\[5pt]
Y_{-1}   (Y_{-2} \otimes \on{id}_V)\mod C_2(V) & = & 0.
\end{array}\right.
\]
This shows that $Y_{-1}$ induces a homogeneous linear map
\[
\pi: R(V) \otimes R(V) \longrightarrow  R(V).
\] 
Moreover, the composition of $\mathbbm{1}$ with the projection $V \longrightarrow R(V)$ gives a homogeneous linear map $\mathbf{1}_{R(V)}: \cc \longrightarrow R(V)$.

We now prove the analogue of Zhu's result:

\begin{theorem}\label{thm:C_2_algebra}
Let $V$ be a $(G_\Gamma, \beta, \gamma_0)$-vertex algebra. Then the tuple $(R(V), \pi, \mathbf{1}_{R(V)})$ is a $\beta$-commutative $G_\Gamma$-algebra. It will be referred as the \textbf{$C_2$-algebra} of $V$.
\end{theorem}

\begin{proof}
By setting $l=n=-1$ and $m=0$ in Equation \eqref{eq:Jacobi_components}, we get
\begin{align}\label{thm:C_2_algebra_eq:1}
Y_{-1}    (Y_{-1} \otimes \on{id}_{V}) =Y_{-1}   (\on{id}_{V} \otimes Y_{-1})  \mod C_2(V).
\end{align}
This proves the associativity of $\pi$.

We also have $ \sum_{n \in \mathbb{Z}}Y_n x^{-n-1}(\vac \otimes \on{id}_V)=\on{id}_V$ from the vacuum condition \eqref{eq:vacuum} in Definition \ref{def:va}, so
\[
Y_{-1}   (\vac \otimes \on{id}_V)=\on{id}_V.
\]
Moreover, the creation condition \eqref{eq:creation} in Definition \ref{def:va} implies that $Y_{-1}   (\on{id}_V \otimes \vac)=\on{id}_{V}$. Thus we get
\begin{align}\label{thm:C_2_algebra_eq:2}
Y_{-1}   (\vac \otimes \on{id}_V)=\on{id}_{V}=Y_{-1}   (\on{id}_V \otimes \vac),
\end{align}
proving that $\mathbf{1}_{R(V)}$ is a unit.

The Jacobi identity \eqref{eq:Jacobi_components} with $l=0$, $m=n=-1$ leads to 
\begin{align*}
Y_{-1}   (\on{id}_V \otimes Y_{-1})  =Y_{-1}   (\on{id}_V \otimes Y_{-1})   (T^\beta \otimes \on{id}_{V}) \mod C_2(V).
\end{align*}
If we compose the above equation on the right by $(\on{id}_V \otimes \on{id}_V \otimes \vac)$, we get
\begin{equation}\label{thm:C_2_algebra_eq:3}
\begin{split}
&Y_{-1}   (\on{id}_V \otimes Y_{-1})   (\on{id}_V \otimes \on{id}_V \otimes \vac)  = \\[5pt]
&Y_{-1}   (\on{id}_V \otimes Y_{-1})   (T^\beta \otimes \on{id}_{V})   (\on{id}_V \otimes \on{id}_V \otimes \vac) \mod C_2(V).
\end{split}
\end{equation}
With Equation \eqref{thm:C_2_algebra_eq:2}, we see that the left hand side of Equation \eqref{thm:C_2_algebra_eq:3}  becomes $ Y_{-1}$. The right hand side can be rewritten as $Y_{-1}   (\on{id}_V \otimes Y_{-1})   (T^\beta \otimes \on{id}_{V})   (\on{id}_V \otimes \on{id}_V \otimes \vac) =Y_{-1}   (\on{id}_V \otimes Y_{-1})   (\on{id}_V \otimes \on{id}_V \otimes \vac)   (T^\beta \otimes \on{id}_{\cc}) =Y_{-1}   T^\beta$. We then see that Equation \eqref{thm:C_2_algebra_eq:3} becomes
\begin{align}\label{thm:C_2_algebra_eq:4}
Y_{-1}   T^\beta= Y_{-1} \mod C_2(V).
\end{align}
This proves the $\beta$-commutativity.
\end{proof}

Given any $G_\Gamma$-module $V$ and map of sets $\beta:\Gamma \times \Gamma \longrightarrow \cc$, we define the following permutation:
\[
\xi^\beta=(\on{id}_V \otimes T^\beta)   (T^\beta \otimes \on{id}_V):  V^{\otimes 3}  \longrightarrow  V^{\otimes 3}.
\]

\begin{definition}\label{def:Lie_algebra}
A \textbf{$(G_\Gamma, \beta)$-Lie algebra of degree $\gamma$} is a tuple $(L, \varpi_0)$ where $L$ is a $G_\Gamma$-module (over a field of characteristic not $2$) and $\varpi_0 : L \otimes L \longrightarrow L$ is a linear map of degree $\gamma$ such that
\begin{enumerate}[itemsep=5pt]
\item $\varpi_0+ \varpi_0   T^\beta=0$ (skew-symmetry).

\item $\varpi_0   (\varpi_0 \otimes \on{id_L} )   (\on{id}_{L^{\otimes 3}} + \xi^\beta + (\xi^\beta)^2)=0$ (Jacobi identity).
\end{enumerate}
\end{definition}

\begin{remark}
For $\beta=1$, the above definition is that of a Lie algebra object in the symmetric tensor category of $G_\Gamma$-modules. If $\beta=0$, then we obtain a commutative Lie algebra.
\end{remark}

The Jacobi identity \eqref{eq:Jacobi_components} with $l=m=0$, $n=-2$ and with $l=-2$, $m=n=0$ lead to 
\begin{empheq}[left = \empheqlbrace]{align}
&Y_0   (\on{id}_V  \otimes Y_{-2}) \mod C_2(V) =  0, \label{eq:Y_0_C_2:1} \\[5pt]
&Y_0   (Y_{-2} \otimes \on{id}_V)  \hspace*{-3pt} \mod C_2(V) =  0. \label{eq:Y_0_C_2:2}
\end{empheq}
This shows that $Y_0$ induces a linear map of degree $0$
\[
\pi_0: R(V) \otimes R(V) \longrightarrow  R(V)
\] 
as $|x^{-1}|=0$ (see Section \ref{sec:A.2}). 

\begin{definition}\label{def:Poisson}
A \textbf{$(G_\Gamma, \beta)$-Poisson algebra of degree} $\gamma \in \Gamma$ is a tuple $(A, \varpi, \mathbf{1}, \varpi_0)$ such that:
\begin{enumerate}[itemsep=5pt]
\item $(A, \mathbf{1}, \varpi)$ is a $G_\Gamma$-algebra.

\item $(A, \varpi_0)$ is a $(G_\Gamma, \beta)$-Lie algebra of degree $\gamma$.

\item $\varpi_0   (\on{id}_A \otimes \varpi) = \varpi   (\varpi_0 \otimes \on{id}_A)-\varpi   (\on{id}_A \otimes \varpi_0)   \xi^\beta $.
\end{enumerate}
\end{definition}

\begin{remark}
The case $\beta=1$ gives a Poisson algebra object in the symmetric tensor category of $G_\Gamma$-modules. Moreover, the case $\beta=0$ is then a Poisson algebra with trivial Lie bracket.
\end{remark}

We can now prove the next result.

\begin{theorem}\label{thm:C_2_Poisson}
Let $V$ be a $(G_\Gamma, \beta, \gamma_0)$-vertex algebra. Assume that $\beta$ satisfies Relation \eqref{relation_2}. Then the tuple $(R(V), \pi, \mathbf{1}_{R(V)}, \pi_0)$ is a $\beta$-commutative $(G_\Gamma, \beta)$-Poisson algebra of degree $0$.
\end{theorem}

\begin{proof}
In the Jacobi identity \eqref{eq:Jacobi_components}, set $l=1$, $m=n=-1$. We get
\[
\begin{array}{r}
Y_0   (\on{id}_V \otimes Y_{-1})-Y_{-1}   (\on{id}_V \otimes Y_{0}) +Y_0   (\on{id}_V \otimes Y_{-1})   (T^\beta \otimes \on{id}_V) \\[5pt]
- Y_{-1}   (\on{id}_V \otimes Y_{0})   (T^\beta \otimes \on{id}_V) \mod C_2(V)=0.
\end{array}
\]
We compose with $(\on{id}_{V} \otimes \on{id}_{V} \otimes \vac)$ on the right of the above equation, and using the fact that $Y_0   (\on{id}_V \otimes \vac)=0$ and $Y_{-1}   (\on{id}_V \otimes \vac)=\on{id}_V$ coming from the creation property \eqref{eq:creation} in Definition \ref{def:va}, we can rewrite the equation as
\begin{align}\label{thm:C_2_Poisson_eq:1}
Y_0 + Y_0   T^\beta \mod C_2(V) =0.
\end{align}
Thus $\pi_0$ is skew-symmetric.

We now consider the Jacobi identity \eqref{eq:Jacobi_components} with $l=m=n=0$:
\begin{align}\label{thm:C_2_Poisson_eq:2}
Y_0   (\on{id}_V \otimes Y_0)-Y_0   (\on{id}_V \otimes Y_0)   (T^\beta \otimes \on{id}_V)= Y_0   (Y_0 \otimes \on{id}_V).
\end{align}
For homogeneous elements $v_i \in V_{\gamma_i}$, $i=1, 2, 3$, we have
\[
\begin{array}{rcl}
(Y_0 \otimes \on{id}_V)   \xi^\beta (v_1 \otimes v_2 \otimes v_3)  & = & \beta(\gamma_1, \gamma_{2})\beta(\gamma_1, \gamma_{3}) (Y_0 \otimes \on{id}_V)(v_2 \otimes v_3 \otimes v_1)\\[5pt]
& = &\beta(\gamma_1, \gamma_{2}+ \gamma_{3}) Y_0(v_2 \otimes v_3) \otimes  v_1\\[5pt]
& = & \beta(\gamma_1, |Y_0(v_2 \otimes v_3) |) Y_0(v_2 \otimes v_3) \otimes  v_1\\[5pt]
& = &T^\beta( v_1 \otimes  Y_0(v_2 \otimes v_3))\\[5pt] 
& = &T^\beta   (\on{id}_V \otimes Y_0 )(v_1 \otimes v_2 \otimes v_3).
\end{array}
\]
It follows that $Y_0   (Y_0 \otimes \on{id}_V)   \xi^\beta = Y_0   T^\beta   (\on{id}_V \otimes Y_0 )  = -  Y_0   (\on{id}_V \otimes Y_0 ) \mod C_2(V)$ by Equation \eqref{thm:C_2_Poisson_eq:1}. It follows that
\begin{align}\label{thm:C_2_Poisson_eq:3}
Y_0   (\on{id}_{V} \otimes Y_0 )=-Y_0   (Y_0 \otimes \on{id}_{V})   \xi^\beta \mod C_2(V)
\end{align}
and
\begin{align}\label{thm:C_2_Poisson_eq:4}
Y_0   (\on{id}_{V} \otimes Y_0 )   \xi^\beta=-Y_0   (Y_0 \otimes \on{id}_{V})   (\xi^\beta)^2 \mod C_2(V).
\end{align}
Moreover,
\begin{align}
Y_0   (\on{id}_V & \otimes Y_0)    (T^\beta \otimes \on{id}_V)  \nonumber \\[5pt]
& = -Y_0   (\on{id}_V \otimes Y_0   T^\beta)   (T^\beta \otimes \on{id}_V) \mod C_2(V)  \nonumber \\[5pt]
& = - Y_0   (\on{id}_V \otimes Y_0)   (\on{id}_V \otimes T^\beta)   (T^\beta \otimes \on{id}_V)  \mod C_2(V) \nonumber \\[5pt]
& = - Y_0   (\on{id}_V \otimes Y_0)   \xi^\beta \mod C_2(V)  \nonumber \\[5pt]
& =Y_0   (Y_0 \otimes \on{id}_{V})   (\xi^\beta)^2 \mod C_2(V)  \quad \text{by Equation \eqref{thm:C_2_Poisson_eq:4}}.\label{thm:C_2_Poisson_eq:5}
\end{align} 
Then, by replacing the terms in Equation \eqref{thm:C_2_Poisson_eq:2} using Equations \eqref{thm:C_2_Poisson_eq:3} and \eqref{thm:C_2_Poisson_eq:5}, we get
\begin{align}\label{thm:C_2_Poisson_eq:6}
Y_0   (Y_0 \otimes \on{id}_V)   (\on{id}_{V^{\otimes 3}} + \xi^\beta + (\xi^\beta)^2) =0 \mod C_2(V).
\end{align}

From Definition \ref{def:Lie_algebra} and Equations \eqref{thm:C_2_Poisson_eq:1} and \eqref{thm:C_2_Poisson_eq:6}, we see that $(R(V), \pi_0)$ is a $(G_\Gamma, \beta)$-Lie algebra of degree $0$. We already know from Theorem \ref{thm:C_2_algebra} that $(R(V), m, \mathbf{1}_{R(V)})$ is a $\beta$-commutative $G_\Gamma$-algebra. It remains to prove the third condition in Definition \ref{def:Poisson}. 

Consider the Jacobi identity \eqref{eq:Jacobi_components} with $l=m=0$, $n=-1$:
\begin{align}\label{thm:C_2_Poisson_eq:7}
Y_0   (\on{id}_V \otimes Y_{-1}) - Y_{-1}   (\on{id}_V \otimes Y_{0})    (T^\beta \otimes \on{id}_V) = Y_{-1}   (Y_0 \otimes \on{id}_V).
\end{align}
Moreover
\begin{align*}
 Y_{-1}   (\on{id}_V \otimes Y_{0})    (T^\beta \otimes \on{id}_V) & =- Y_{-1}   (\on{id}_V \otimes Y_{0}   T^\beta)    (T^\beta \otimes \on{id}_V) \mod C_2(V)\\[5pt]
& =- Y_{-1}   (\on{id}_V \otimes Y_{0})  (\on{id}_V \otimes T^\beta)    (T^\beta \otimes \on{id}_V)  \ \on{mod} \ C_2(V) \\[5pt]
& =-  Y_{-1}   (\on{id}_V \otimes Y_{0})  \xi^\beta \mod C_2(V).
\end{align*} 
Therefore Equation \eqref{thm:C_2_Poisson_eq:7} leads to
\begin{align*}
 Y_0   (\on{id}_V \otimes Y_{-1}) +Y_{-1}   (\on{id}_V \otimes Y_{0})  \xi^\beta =Y_{-1}   (Y_0 \otimes \on{id}_V) \mod C_2(V),
\end{align*}
which then implies
\begin{align*}
\pi_0   (\on{id}_{R(V)} \otimes \pi)= \pi   (\pi_0 \otimes \on{id}_{R(V)}) - \pi   (\on{id}_{R(V)} \otimes \pi_{0})  \xi^\beta
\end{align*}
and this is exactly the third condition in Definition \ref{def:Poisson}.
\end{proof}

\subsection{The associated $C_2$-modules}

Let $V$ be $(G_\Gamma, \beta, \gamma_0)$-vertex algebra and $M$ a $(G_\Gamma, \beta, \gamma_0)$-left $V$-module. Set $C_2(M)=\on{Im}(Y^M_{-2})$ and consider the space
\[
R(M)=\on{CoKer}Y^M_{-2}=M/C_2(M). 
\]
It is a $G_\Gamma$-module because $|Y^M_{-2}|=-|x|=-2\gamma_0$.

By looking at the coefficient of $x_0^{-l-1}x_1^{-m-1}x_2^{-n-1}$ in the Jacobi identity \eqref{eq:Jacobi_mod} of a $(G_\Gamma, \beta, \gamma_0)$-left $V$-module $M$, we get
\begin{equation}\label{eq:Jacobi_mod_coeff}
\begin{split}
\scalebox{0.95}{$\displaystyle \sum_{i \geq 0}(-1)^i \binom{l}{i} Y^M_{m+l-i}  (\on{id}_V  \otimes Y^M_{n+i})-(-1)^l\sum_{i \geq 0} (-1)^i \binom{l}{i} Y^M_{n+l-i} (\on{id}_V \otimes Y^M_{m+i}) (T^\beta \otimes \on{id}_{M} )$} \\[5pt]
\scalebox{0.95}{$  \displaystyle    =\sum_{i \geq 0} \binom{m}{i} Y^M_{m+n-i} (Y_{l+i} \otimes \on{id}_M)$}.
\end{split}
\end{equation}
We know from \ref{def:derivation_mod} in Definition \ref{def:va_mod} that $Y^M_n   (D \otimes \on{id}_M)=-n Y^M_{n-1}$ for all $n \in \mathbb{Z}$, so we have
\begin{align}\label{eq:Y^M_{-2-n}}
Y^M_{-2-n}=\frac{1}{(n+1)!} Y^M_{-2}   (D \otimes \on{id}_M)^n
\end{align}
for all $n \geq 0$.  This leads to the following lemma:

\begin{lemma}\label{lem:Y^M_{-2-n}}
Let $V$ be $(G_\Gamma, \beta, \gamma_0)$-vertex algebra and $M$ a $(G_\Gamma, \beta, \gamma_0)$-left $V$-module. Then $\on{Im}(Y^M_{-2-n}) \subseteq C_2(M)$ for all $n \geq 0$.
\end{lemma}

\begin{definition}\label{def:algebra_mod}
Let $(A, \varpi, \mathbf{1}_A)$ be a $G_\Gamma$-algebra. A $G_\Gamma$\textbf{-left module} for $A$ is a $G_\Gamma$-module $M$ with a homogeneous linear map $\varpi^M: A \otimes M \longrightarrow M$ such that
\[
\begin{tikzcd}
A \otimes A \otimes M \arrow[d, "\varpi \otimes \on{id}_M"']  \arrow[r, "\on{id}_A \otimes \varpi^M"] & A \otimes M \arrow[d, "\varpi^M"]  \\[10pt]
A \otimes M \arrow[r, "\varpi^M", labels=below] & M
\end{tikzcd} \text{ and } \begin{tikzcd}
M \arrow[dr, "\on{id}_{M}"']  \arrow[r, "\mathbf{1}_A \otimes \on{id}_M"] & A \otimes M \arrow[d, "\varpi^M"]  \\[10pt]
&  M
\end{tikzcd}
\]
are commutative diagrams.
\end{definition}

The Jacobi identity \eqref{eq:Jacobi_mod_coeff} with $l=0$, $m=-1$, $n=-2$ and with $l=-2$, $m=1$, $n=-2$ lead to 
\[
\left\{\begin{array}{lcl}
Y^M_{-1}   ( \on{id}_V \otimes Y^M_{-2}) \mod C_2(M)& = & 0, \\[5pt]
Y^M_{-1}   (Y_{-2} \otimes \on{id}_M)\mod C_2(M) & = & 0.
\end{array}\right.
\]
This shows that $Y^M_{-1}$ induces a homogeneous linear map
\[
\pi^M: R(V) \otimes R(M) \longrightarrow  R(M).
\]

\begin{theorem}\label{thm:C_2_algebra_mod}
Let $V$ be a $(G_\Gamma, \beta, \gamma_0)$-vertex algebra and $M$ a $(G_\Gamma, \beta, \gamma_0)$-left $V$-module. Then the tuple $(R(M), \pi^M)$ is a $G_\Gamma$-left module for the $G_\Gamma$-algebra $(R(V), \pi, \mathbf{1}_{R(V)})$.
\end{theorem}

\begin{proof}
By setting $l=n=-1$ and $m=0$ in Equation \eqref{eq:Jacobi_mod_coeff}, we get
\begin{align}\label{algebra_eq:1}
Y^M_{-1}    (Y_{-1} \otimes \on{id}_{M}) =Y^M_{-1}   (\on{id}_{V} \otimes Y^M_{-1})  \mod C_2(M).
\end{align}
This proves the commutativity of the first diagram in Definition \ref{def:algebra_mod}.

We also have $ \sum_{n \in \mathbb{Z}}Y^M_n x^{-n-1}(\vac \otimes \on{id}_M)=\on{id}_M$ from the vacuum condition \eqref{eq:vacuum_mod} in Definition \ref{def:va_mod}, so
\[
Y^M_{-1}   (\vac \otimes \on{id}_M)=\on{id}_M,
\]
proving that the second diagram in Definition \ref{def:algebra_mod} is commutative.
\end{proof}

\begin{definition}\label{def:Lie_algebra_mod}
Let $(L, \varpi_0)$ be a $(G_\Gamma, \beta)$-Lie algebra of degree $\gamma$. A $(G_\Gamma, \beta)$\textbf{-left Lie module} for $L$ is a tuple $(M, \varpi^M_0)$ where $M$ is a $G_\Gamma$-module (over a field of characteristic not $2$) and $\varpi^M_0 : L \otimes M \longrightarrow M$ is a linear map of degree $\gamma$ such that
\[
\varpi_0^M   (\varpi_0 \otimes \on{id}_M)=\varpi_0^M   (\on{id}_L \otimes \varpi_0^M)-\varpi_0^M   (\on{id}_L \otimes \varpi_0^M)   (T^\beta \otimes \on{id}_M).
\]
\end{definition}

The Jacobi identity \eqref{eq:Jacobi_mod_coeff} with $l=m=0$, $n=-2$ and with $l=-2$, $m=n=0$ lead to 
\begin{empheq}[left = \empheqlbrace]{align}
&Y^M_0   (\on{id}_V  \otimes Y^M_{-2}) \mod C_2(M) =  0, \label{eq:Y_0M_C_2:1} \\[5pt]
&Y^M_0   (Y_{-2} \otimes \on{id}_M)  \hspace*{-3pt} \mod C_2(M) =  0. \label{eq:Y_0M_C_2:2}
\end{empheq}

This shows that $Y^M_0$ induces a linear map of degree $0$
\[
\pi^M_0: R(V) \otimes R(M) \longrightarrow  R(M).
\]

By setting $l=m=n=0$ in the Jacobi identity \eqref{eq:Jacobi_mod_coeff} we get:
\begin{align*}
Y_0^M   (\on{id}_V \otimes Y_0^M) - Y_0^M   (\on{id}_V \otimes Y_0^M)   (T^\beta \otimes \on{id}_M) =  Y_0^M   (Y_0 \otimes \on{id}_M).
\end{align*}
We then mod out by $C_2(V)$ and $C_2(M)$ and obtain:
\begin{align}\label{eq:algebra_eq:2}
\scalebox{0.95}{$\pi_0^M   (\on{id}_{R(V)} \otimes \pi_0^M) - \pi_0^M   (\on{id}_{R(V)} \otimes \pi_0^M)   (T^\beta \otimes \on{id}_{R(M)}) = \pi_0^M   (\pi_0 \otimes \on{id}_{R(M)}).$}
\end{align}
Using Equation \eqref{eq:algebra_eq:2} as well as Definition \ref{def:Lie_algebra_mod} and the proof of Theorem \ref{thm:C_2_Poisson}, we immediately verify the next result:

\begin{theorem}\label{thm:C_2_Lie_algebra_mod}
Let $V$ be a $(G_\Gamma, \beta, \gamma_0)$-vertex algebra and $M$ a $(G_\Gamma, \beta, \gamma_0)$-left $V$-module. Assume that $\beta$ satisfies Relation \eqref{relation_2}. Then $(R(M), \pi_0^M)$ is a $(G_\Gamma, \beta)$-left Lie module for the $(G_\Gamma, \beta)$-Lie algebra $(R(V), \pi_{0})$ of degree $0$.
\end{theorem}

The notion of Poisson module was introduced in \cite{Farkas}. We formulate below the definition in the context of $G_\Gamma$-modules:

\begin{definition}\label{def:Poisson_mod}
Let $(A, \varpi, \mathbf{1}_A, \varpi_0)$ be a $(G_\Gamma, \beta)$-Poisson algebra of degree $\gamma$. A $(G_\Gamma, \beta)$\textbf{-left Poisson module} for $A$ is a triple $(M, \varpi^M, \varpi^M_0)$ such that $M$ is a $G_\Gamma$-module, $\varpi^M: A \otimes M \longrightarrow M $ and $\varpi^M_0: A \otimes M \longrightarrow M$ are homogeneous linear maps of degree $0$ and $\gamma$ respectively, and they satisfy
\begin{enumerate}[itemsep=5pt]
\item $(M, \varpi^M)$ is a left module for the algebra $(A, \varpi, \mathbf{1}_A)$.

\item $(M, \varpi^M_0)$ is a left Lie module for the Lie algebra $(A, \varpi_0)$ of degree $\gamma$.

\item \label{def:Poissonmod3} $\varpi_0^M   (\on{id}_A \otimes \varpi^M) = \varpi^M   (\varpi_0 \otimes \on{id}_M)+ \varpi^M   (\on{id}_A \otimes \varpi_0^M)   (T^\beta \otimes \on{id}_M)$.

\item \label{def:Poissonmod4} $\varpi_0^M   (\varpi \otimes \on{id}_M) = \varpi^M   (\on{id}_A \otimes \varpi_0^M) + \varpi^M   (\on{id}_A \otimes \varpi_0^M)   (T^\beta \otimes \on{id}_M)$.
\end{enumerate}
\end{definition}

We now have a result similar to Theorem \ref{thm:C_2_Poisson} but for modules:

\begin{theorem}\label{thm:C_2_Poisson_mod}
Set $V$ a $(G_\Gamma, \beta, \gamma_0)$-vertex algebra and $M$ a $(G_\Gamma, \beta, \gamma_0)$-left $V$-module. Assume that $\beta$ satisfies Relation \eqref{relation_2}. Then the tuple $(R(M), \pi^M, \pi^M_0)$ is a $(\Gamma, \beta)$-left Poisson module for the $\beta$-commutative $(G_\Gamma, \beta)$-Poisson algebra $(R(V), \pi, \mathbf{1}_{R(V)}, \pi_{0})$ of degree $0$.
\end{theorem}

\begin{proof}
We know that $R(V)$ is a $\beta$-commutative $(G_\Gamma, \beta)$-Poisson algebra by Theorem \ref{thm:C_2_Poisson}. The first two points in Definition \ref{def:Poisson_mod} are verified by Theorems \ref{thm:C_2_algebra_mod} and \ref{thm:C_2_Lie_algebra_mod}. Then, in the Jacobi identity \eqref{eq:Jacobi_mod_coeff}, set $l=m=0$, $n=-1$. We get
\begin{align*}
Y_0^M   (\on{id}_V \otimes Y_{-1}^M) - Y_{-1}^M   (\on{id}_V \otimes Y_0^M)   (T^\beta \otimes \on{id}_M) =Y_{-1}^M   (Y_0 \otimes \on{id}_M).
\end{align*}
We mod out by $C_2(V)$ and $C_2(M)$ to obtain
\begin{align*}
\pi_0^M   (\on{id}_{R(V)} \otimes \pi^M) - \pi^M   (\on{id}_{R(V)} \otimes \pi_0^M)   (T^\beta \otimes \on{id}_{R(M)}) = \pi^M   (\pi_0 \otimes \on{id}_{R(M)}),
\end{align*}
which is exactly the third relation in Definition \ref{def:Poisson_mod}. The last relation is obtained by setting $l=-1$ and $m=n=0$  in the Jacobi identity \eqref{eq:Jacobi_mod_coeff}.
\end{proof}

\section{The $C_2$-coalgebra and its associated comodules}\label{sec:coC2}
\subsection{The structure of the $C_2$-coalgebra}
For a $(G_\Gamma, \beta, \gamma_0)$-vertex coalgebra $V$, consider the space
\[
\R(V)=\on{Ker}\Y_{-2}. 
\]
It is a $G_\Gamma$-module because $|\Y_{-2}|=-|x|=-2\gamma_0$.

We will show that this subspace plays a role similar to that of the $C_2$-algebra of a $(G_\Gamma, \beta, \gamma_0)$-vertex algebra.

By looking at the coefficient of $x_0^{-l-1}x_1^{-m-1}x_2^{-n-1}$ in the co-Jacobi identity \eqref{eq:coJacobi} of a $V$, we get
\begin{equation}\label{eq:coJacobi_components}
\begin{split}
\scalebox{0.92}{$\displaystyle \sum_{i \geq 0}(-1)^i \binom{l}{i}( \Y_{n+i} \otimes \on{id}_{V})   \Y_{m+l-i}-(-1)^l\sum_{i \geq 0} (-1)^i \binom{l}{i}( \on{id}_{V} \otimes T^\beta)    (\Y_{m+i} \otimes \on{id}_{V})\Y_{n+l-i}$} \\[5pt]
\scalebox{0.92}{$  \displaystyle  =\sum_{i \geq 0} \binom{m}{i}(\on{id}_{V} \otimes \Y_{l+i})   \Y_{m+n-i}$}.
\end{split}
\end{equation}
We know from \ref{def:coderivation} in Definition \ref{def:cova} that $(\on{id}_V \otimes \D )   \Y_n=-n \Y_{n-1}$ for all $n \in \mathbb{Z}$, so we have
\begin{align}\label{eq:YY_{-2-n}}
\Y_{-2-n}=\frac{1}{(n+1)!}(\on{id}_V \otimes \D)^n   \Y_{-2}
\end{align}
for all $n \geq 0$. This leads to the following lemma:

\begin{lemma}\label{lem:YY_{-2-n}}
Let $V$ be $(G_\Gamma, \beta, \gamma_0)$-vertex coalgebra. Then $\R(V) \subseteq \on{Ker}\Y_{-2-n}$ for all $n \geq 0$. 
\end{lemma}

\begin{definition}\label{def:coalgebra}
A \textbf{$G_\Gamma$-coalgebra} is a triple $(C,\delta, c)$ such that $C$ is a $G_\Gamma$-module, and $\delta: C \longrightarrow C \otimes C$ (coproduct) and $c:C \longrightarrow \cc$ (counit) are homogeneous linear maps satisfying:
\begin{enumerate}[itemsep=5pt]
\item $(\on{id}_C \otimes \delta)   \delta=(\delta \otimes \on{id}_C)    \delta$ (coassociativity).

\item $(\on{id}_C \otimes c)    \delta=\on{id}_C=(c\otimes \on{id}_C)    \delta$ (counit).
\end{enumerate}
A $G_\Gamma$-coalgebra $(C,\delta, c)$ is $\beta$-\textbf{cocommutative} if $T^\beta   \delta  =\delta$.
\end{definition}

\begin{remark}
As we did in the algebra case, we use the term $\beta$-cocommutativity to emphasise that we use the map $T^\beta$ and not the braiding in the cocommutativity formulation.
\end{remark}

We will need the following lemma:
\begin{lemma}\label{lem:intersection_vector_spaces}
Given vector spaces $W \subseteq V$, we have
\[
(W \otimes V) \cap (V \otimes W)= W \otimes W.
\]
\end{lemma}

\begin{proof}
Consider $(w_i)_{i \in I}$ a basis of $W$ and complete it into $(v_j)_{j \in J}$ a basis of $V$, where $w_i=v_i$ and $I \subseteq J$.

For $x \in W \otimes V$, we can write $x=\sum_{i, j}x_{i, j} w_i \otimes v_j$, and for $x \in V \otimes W$, we can write $x=\sum_{i, j}x_{j, i}' v_j \otimes w_i$. So if $x \in (W \otimes V) \cap (V \otimes W)$, then
\[
\sum_{i, j}x_{i, j} w_i \otimes v_j=\sum_{i, j}x_{j, i}' v_j \otimes w_i.
\]
Thus, if on the left hand side $x_{i, j} \neq 0$, then $w_i \otimes v_j$ should appear on the right hand side, meaning that $v_j \in W$. Likewise if $x_{j, i}' \neq 0$. So the components with non zero coefficients are in $W \otimes W$, implying that $(W \otimes V) \cap (V \otimes W) \subseteq W \otimes W$. The other inclusion is obvious.
\end{proof}

The co-Jacobi identity \eqref{eq:coJacobi_components} with $l=0$, $m=-1$, $n=-2$ and with $l=-2$, $m=1$, $n=-2$ lead to 
\[
\left\{\begin{array}{lcl}
(\Y_{-2} \otimes \on{id}_{V})   \Y_{-1}(\on{Ker}\Y_{-2})& = & 0, \\[5pt]
(\on{id}_{V} \otimes \Y_{-2})   \Y_{-1}(\on{Ker}\Y_{-2})& = & 0.
\end{array}\right.
\]
Set $v \in \on{Ker}\Y_{-2}$. We write $\Y_{-1}(v)=\sum_{i}v_{1,i} \otimes v_{2, i}$ with $\{v_{1, i}\}$ a linearly independent family. Then $0=(\on{id}_{V} \otimes \Y_{-2})   \Y_{-1}(v)=\sum_{i}v_{1,i} \otimes \Y_{-2}(v_{2,i})$. But as the $v_{1,i}$'s are linearly independent, it follows that $\Y_{-2}(v_{2, i})=0$ for all $i$. Hence $\Y_{-1}(\on{Ker}\Y_{-2}) \subseteq V \otimes \on{Ker}\Y_{-2}$. By doing the same with $\{v_{2, i}\}$ linearly independent, we get $\Y_{-1}(\on{Ker}\Y_{-2}) \subseteq  \on{Ker}\Y_{-2} \otimes V$. Thus
\[
\Y_{-1}(\on{Ker}\Y_{-2}) \subseteq  (V \otimes \on{Ker}\Y_{-2}) \cap (\on{Ker}\Y_{-2} \otimes V).
\]
By Lemma \ref{lem:intersection_vector_spaces} and the previously obtained inclusion, we get
\begin{align}\label{coalgebra_eq:5}
\Y_{-1}(\on{Ker}\Y_{-2}) \subseteq  \on{Ker}\Y_{-2} \otimes \on{Ker}\Y_{-2}.
\end{align}
Hence there is a homogeneous linear map
\[
\Delta=\left.\Y_{-1}\right._{\mkern 1mu \vrule height 2ex\mkern2mu \R(V)}: \R(V) \longrightarrow  \R(V) \otimes \R(V).
\] 
As $\epsilon$ is homogeneous, its restriction $c_{\R(V)}=\left.\epsilon\right._{\mkern 1mu \vrule height 2ex\mkern2mu \R(V)}$ is also homogeneous.

We now prove the following result:

\begin{theorem}\label{thm:C_2_coalgebra}
Let $V$ be a $(G_\Gamma, \beta, \gamma_0)$-vertex coalgebra. Then the tuple $(\R(V), \Delta, c_{\R(V)})$ is a $\beta$-cocommutative $G_\Gamma$-coalgebra.
\end{theorem}

\begin{proof}
By setting $l=n=-1$, $m=0$, and $v \in \on{Ker}\Y_{-2}$ in Equation \eqref{eq:coJacobi_components}, we get
\begin{align}\label{coalgebra_eq:1}
( \Y_{-1} \otimes \on{id}_{V})    \Y_{-1}(v)=(\on{id}_{V} \otimes \Y_{-1})   \Y_{-1}(v).
\end{align}
This proves the coassociativity of $\Delta$.

We see from the covacuum condition \eqref{eq:covacuum} in Definition \ref{def:cova} that $(\on{id}_V \otimes \epsilon)   \Y_{-1}= \on{id}_{V}$. Moreover, the cocreation condition \eqref{eq:cocreation} in Definition \ref{def:cova} implies that $(\epsilon \otimes \on{id}_V)   \Y_{-1}=\on{id}_{V}$. Thus we get
\begin{align}\label{coalgebra_eq:2}
(\on{id}_V \otimes \epsilon)   \Y_{-1}=\on{id}_{V}=(\epsilon \otimes \on{id}_V)    \Y_{-1},
\end{align}
proving that $c_{\R(V)}$ is a counit.

Set $v \in \on{Ker}\Y_{-2}$. The co-Jacobi identity \eqref{eq:coJacobi_components} with $l=0$, $m=n=-1$ applied to $v$ leads to 
\begin{align*}
( \Y_{-1} \otimes \on{id}_{V})    \Y_{-1}(v)=( \on{id}_{V} \otimes T^\beta)     (\Y_{-1} \otimes \on{id}_{V})   \Y_{-1}(v).
\end{align*}
If we compose on the left by $(\epsilon \otimes \on{id}_V \otimes \on{id}_V)$, we get
\begin{equation}\label{coalgebra_eq:3}
\begin{split}
(\epsilon \otimes \on{id}_V \otimes \on{id}_V) &   ( \Y_{-1} \otimes \on{id}_{V})   \Y_{-1}(v) \\[5pt]
&=(\epsilon \otimes \on{id}_V \otimes \on{id}_V)   ( \on{id}_{V} \otimes T^\beta)   (\Y_{-1} \otimes \on{id}_{V})   \Y_{-1}(v).
\end{split}
\end{equation}
With Equation \eqref{coalgebra_eq:2}, we see that the left hand side of Equation \eqref{coalgebra_eq:3}  becomes $ \Y_{-1}(v)$. On the right hand side, we permute $(\epsilon \otimes \on{id}_V \otimes \on{id}_V)$ and $( \on{id}_{V} \otimes T^\beta)$ and using Equation \eqref{coalgebra_eq:2}, the right hand side becomes $T^\beta   \Y_{-1}(v)$. It follows that Equation \eqref{coalgebra_eq:3} becomes
\begin{align}\label{coalgebra_eq:4}
T^\beta   \Y_{-1}(v)= \Y_{-1}(v),
\end{align}
which proves the $\beta$-cocommutativity.
\end{proof}

The notion dual to that of a Lie algebra is the Lie coalgebra.

\begin{definition}\label{def:Lie_coalgebra}
A \textbf{$(G_\Gamma, \beta)$-Lie coalgebra of degree $\gamma$} is a tuple $(C, \delta_0)$ where $C$ is a $G_\Gamma$-module (over a field of characteristic not $2$) and $\delta_0 :C \longrightarrow C \otimes C$ is a linear map of degree $\gamma$ such that
\begin{enumerate}[itemsep=5pt]
\item $\delta_0+T^\beta   \delta_0=0$ (skew-symmetry).

\item $(\on{id}_{C^{\otimes 3}} + \xi^\beta + (\xi^\beta)^2)   ( \on{id}_C \otimes \delta_0)    \delta_0=0$ (co-Jacobi identity).
\end{enumerate}
\end{definition}

\begin{remark}
\begin{enumerate}[wide]
\item There is a general definition for a Lie coalgebra as a vector space over a field, whatever the characteristic may be, but the first condition is different and less practical (see \cite{Michaelis}). Moreover, in the characteristic $0$ case, the two definitions are equivalent.

\item The case $\beta=0$ corresponds to a trivial Lie cobracket.
\end{enumerate}
\end{remark}

The co-Jacobi identity \eqref{eq:coJacobi_components} with $l=m=0$, $n=-2$ and with $l=-2$, $m=n=0$ lead to 
\[
\left\{\begin{array}{lcl}
(\Y_{-2} \otimes \on{id}_{V})   \Y_{0}(\on{Ker}\Y_{-2})& = & 0, \\[5pt]
(\on{id}_{V} \otimes \Y_{-2})   \Y_{0}(\on{Ker}\Y_{-2})& = & 0.
\end{array}\right.
\]
With a reasoning similar to the one used for $\Y_{-1}$, we show that
\begin{align}\label{coalgebra_eq:6}
\Y_{0}(\on{Ker}\Y_{-2}) \subseteq  \on{Ker}\Y_{-2} \otimes \on{Ker}\Y_{-2}.
\end{align}
Hence we get a homogeneous linear map 
\[
\Delta_0=\left.\Y_{0}\right._{\mkern 1mu \vrule height 2ex\mkern2mu \R(V)}: \R(V) \longrightarrow  \R(V) \otimes \R(V).
\]

The notion of co-Poisson coalgebra was introduced in \cite{CP}. In fact the notion of Co-Poisson Hopf algebra was introduced by Drinfeld in \cite{Drinfeld}. We extend the definition to the $G_\Gamma$-module context.

\begin{definition}\label{def:coPoisson}
A \textbf{$(G_\Gamma, \beta)$-co-Poisson coalgebra of degree} $\gamma \in \Gamma$ is a tuple $(C, \delta, c, \delta_0)$ such that:
\begin{enumerate}[itemsep=5pt]
\item $(C, \delta, c)$ is a $G_\Gamma$-coalgebra.
\item $(C, \delta_0)$ is a $(G_\Gamma, \beta)$-Lie coalgebra of degree $\gamma$.
\item $(\delta \otimes \on{id}_C )  \delta_0 =(\on{id}_C \otimes \delta_0)   \delta-\xi^\beta   (\delta_0 \otimes \on{id}_C)   \delta$.
\end{enumerate}
\end{definition}

We can now prove the next result.

\begin{theorem}\label{thm:C_2_co_Poisson}
Let $V$ be a $(G_\Gamma, \beta, \gamma_0)$-vertex coalgebra. Assume that $\beta$ satisfies Relation \eqref{relation_2}. Then the tuple $(\R(V), \Delta, c_{\R(V)}, \Delta_0)$ is a $\beta$-cocommutative $(G_\Gamma, \beta)$-co-Poisson coalgebra of degree $0$.
\end{theorem}

\begin{remark}
The case $\beta=0$ provides a co-Poisson coalgebra with trivial Lie cobracket.
\end{remark}

\begin{proof}
In the co-Jacobi identity \eqref{eq:coJacobi_components} applied to $v \in \on{Ker}\Y_{-2}$, set $l=1$, $m=n=-1$. We get

\[
\begin{array}{r}
( \Y_{-1} \otimes \on{id}_{V})    \Y_{0}(v)-( \Y_{0} \otimes \on{id}_{V})    \Y_{-1}(v)+( \on{id}_{V} \otimes T^\beta)     (\Y_{-1} \otimes \on{id}_{V})   \Y_{0}(v) \\[5pt]
-( \on{id}_{V} \otimes T^\beta)     (\Y_{0} \otimes \on{id}_{V})    \Y_{-1}(v) =0.
\end{array}
\]
We compose with $(\epsilon \otimes \on{id}_{V} \otimes \on{id}_{V})$ on the left and use the fact that $(\epsilon \otimes \on{id}_V)   \Y_0=0$ and $(\epsilon \otimes \on{id}_V)   \Y_{-1}=\on{id}_V$ coming from the cocreation property \eqref{eq:cocreation} in Definition \ref{def:cova}. this leads us to
\begin{align}\label{thm:C_2_co_Poisson_eq:1}
\Y_0(v)+T^\beta   \Y_0(v)=0.
\end{align}
Thus $\delta_0$ is skew-symmetric.

We now consider the co-Jacobi identity \eqref{eq:coJacobi_components} with $l=m=n=0$:
\begin{align}\label{thm:C_2_co_Poisson_eq:2}
( \Y_{0} \otimes \on{id}_{V})    \Y_{0}-( \on{id}_{V} \otimes T^\beta)     (\Y_{0} \otimes \on{id}_{V})   \Y_{0} = (\on{id}_{V} \otimes \Y_{0})    \Y_{0}
\end{align}
For any $v \in V$, we write $\Y_0(v)=\sum_i v_{1, i} \otimes v_{2, i}$. We compute
\[
\begin{array}{rl}
\xi^\beta   (\on{id}_V & \mkern-15mu \otimes \Y_0)   \Y_0(v)  \\[5pt]
& = \sum_i \xi^\beta   (v_{1, i} \otimes \Y_0(v_{2, i}))\\[5pt]
& = \displaystyle \sum_i \sum_j \xi^\beta (v_{1, i} \otimes (v_{2, i})_{1, j} \otimes (v_{2, i})_{2, j})\\[5pt]
& = \displaystyle \sum_i \sum_j \beta(|v_{1, i}|, |(v_{2, i})_{1, j}|) \beta(|v_{1, i}|,|(v_{2, i})_{2, j}|) ((v_{2, i})_{1, j} \otimes (v_{2, i})_{2, j} \otimes v_{1, i})\\[5pt]
& = \displaystyle \sum_i \sum_j \beta(|v_{1, i}|, |(v_{2, i})_{1, j}|+|(v_{2, i})_{2, j}|) ((v_{2, i})_{1, j} \otimes (v_{2, i})_{2, j} \otimes v_{1, i})\\[5pt]
& = \displaystyle \sum_i \sum_j \beta(|v_{1, i}|, |v_{2, i}|) ((v_{2, i})_{1, j} \otimes (v_{2, i})_{2, j} \otimes v_{1, i})\\[5pt]
& = \displaystyle \sum_i \beta(|v_{1, i}|, |v_{2, i}|) (\Y_0(v_{2, i}) \otimes v_{1, i})\\[5pt]
& = \displaystyle  (\Y_0 \otimes \on{id}_V) \sum_i \beta(|v_{1, i}|, |v_{2, i}|) (v_{2, i} \otimes v_{1, i})\\[5pt]
& = \displaystyle  (\Y_0 \otimes \on{id}_V)   T^\beta   \Y_0(v).
\end{array}
\]
It follows that for any $v \in \on{Ker}\Y_{-2}$, we have $\xi^\beta   (\on{id}_V \otimes \Y_0)   \Y_0(v) =  (\Y_0 \otimes \on{id}_V)   T^\beta   \Y_0(v)= -(\Y_0 \otimes \on{id}_V)    \Y_0(v)$ by Equation \eqref{thm:C_2_co_Poisson_eq:1}. It follows that
\begin{align}\label{thm:C_2_co_Poisson_eq:3}
\xi^\beta   (\on{id}_V \otimes \Y_0)   \Y_0(v) =   -(\Y_0 \otimes \on{id}_V)    \Y_0(v)
\end{align}
and
\begin{align}\label{thm:C_2_co_Poisson_eq:4}
(\xi^\beta)^2   (\on{id}_V \otimes \Y_0)   \Y_0(v) =   -\xi^\beta   (\Y_0 \otimes \on{id}_V)    \Y_0(v)
\end{align}
Moreover,
\begin{align}
 (\on{id}_V \otimes T^\beta)   & (\Y_0 \otimes \on{id}_V)   \Y_0(v)  \nonumber \\[5pt]
& = - (\on{id}_V \otimes T^\beta)    ((T^\beta   \Y_0) \otimes \on{id}_V)   \Y_0(v)   \nonumber \\[5pt]
& = - (\on{id}_V \otimes T^\beta)    (T^\beta \otimes \on{id}_V)   ( \Y_0 \otimes \on{id}_V)   \Y_0(v)   \nonumber \\[5pt]
& = -\xi^\beta   ( \Y_0 \otimes \on{id}_V)   \Y_0(v)   \nonumber \\[5pt]
& =(\xi^\beta)^2   (\on{id}_V \otimes \Y_0)   \Y_0(v)   \quad \text{by Equation \eqref{thm:C_2_co_Poisson_eq:4}}.\label{thm:C_2_co_Poisson_eq:5}
\end{align} 
Then, by replacing the terms in Equation \eqref{thm:C_2_co_Poisson_eq:2} using Equations \eqref{thm:C_2_co_Poisson_eq:3} and \eqref{thm:C_2_co_Poisson_eq:5}, we get
\begin{align}\label{thm:C_2_co_Poisson_eq:6}
(\on{id}_{V^{\otimes 3}}+\xi^\beta+(\xi^\beta)^2)   (\on{id}_V \otimes \Y_0)   \Y_0(v) =0.
\end{align}

From Definition \ref{def:Lie_coalgebra} and Equations \eqref{thm:C_2_co_Poisson_eq:1} and \eqref{thm:C_2_co_Poisson_eq:6}, we see that $(\R(V), \Delta_0)$ is a $(G_\Gamma, \beta)$-Lie coalgebra of degree $0$. We already know from Theorem \ref{thm:C_2_coalgebra} that $(\R(V), \Delta, c_{\R(V)})$ is a $\beta$-cocommutative $G_\Gamma$-coalgebra. It remains to prove the third condition in Definition \ref{def:coPoisson}.

Consider the co-Jacobi identity \eqref{eq:coJacobi_components} with $l=m=0$, $n=-1$:
\begin{align}\label{thm:C_2_co_Poisson_eq:7}
( \Y_{-1} \otimes \on{id}_{V})    \Y_{0}-( \on{id}_{V} \otimes T^\beta)     (\Y_{0} \otimes \on{id}_{V})   \Y_{-1} =(\on{id}_{V} \otimes \Y_{0})    \Y_{-1}.
\end{align}
Moreover, for $v \in \on{Ker}\Y_{-2}$, we have
\begin{align*}
-\xi^\beta   (\Y_0 \otimes \on{id}_V)   \Y_{-1}(v) & =\xi^\beta   ((T^\beta   \Y_0) \otimes \on{id}_V)   \Y_{-1}(v)\\[5pt]
&=\xi^\beta   (T^\beta \otimes \on{id}_V)    (\Y_0 \otimes \on{id}_V)   \Y_{-1}(v) \\[5pt]
&=(\on{id}_V \otimes T^\beta)   ((T^\beta)^2 \otimes \on{id}_V)    (\Y_0 \otimes \on{id}_V)   \Y_{-1}(v) \\[5pt]
&=(\on{id}_V \otimes T^\beta)   (\Y_0 \otimes \on{id}_V)   \Y_{-1}(v)
\end{align*} 
Therefore Equation \eqref{thm:C_2_co_Poisson_eq:7} leads to
\begin{align*}
( \Y_{-1} \otimes \on{id}_{V})    \Y_{0}(v)+\xi^\beta   (\Y_0 \otimes \on{id}_V)   \Y_{-1}(v) =(\on{id}_{V} \otimes \Y_{0})    \Y_{-1}(v),
\end{align*}
which then implies
\begin{align*}
( \Delta \otimes \on{id}_{\R(V)})    \Delta_{0}=(\on{id}_{\R(V)} \otimes \Delta_{0})    \Delta-\xi^\beta   (\Delta_0 \otimes \on{id}_{\R(V)})   \Delta
\end{align*}
and this is exactly the third condition in Definition \ref{def:coPoisson}.
\end{proof}

\subsection{The associated $C_2$-comodules}
Let $V$ be a $(G_\Gamma, \beta, \gamma_0)$-vertex coalgebra and $M$ a $(G_\Gamma, \beta, \gamma_0)$-right $V$-comodule. Consider the space
\[
\R(M)=\on{Ker}\Y^M_{-2}. 
\]
It is a $G_\Gamma$-module because $|\Y^M_{-2}|=-|x|=-2\gamma_0$.

By looking at the coefficient of $x_0^{-l-1}x_1^{-m-1}x_2^{-n-1}$ in the co-Jacobi identity \eqref{eq:coJacobi_mod} of a $V$, we get
\begin{equation}\label{eq:coJacobi_mod_components}
\begin{split}
\scalebox{0.95}{$\displaystyle \sum_{i \geq 0}(-1)^i \binom{l}{i}( \Y^M_{n+i} \otimes \on{id}_{V})    \Y^M_{m+l-i}-(-1)^l\sum_{i \geq 0} (-1)^i \binom{l}{i}( \on{id}_{M} \otimes T^\beta)     (\Y^M_{m+i} \otimes \on{id}_{V})$} \\[5pt]
\scalebox{0.95}{$  \displaystyle   \Y^M_{n+l-i} =\sum_{i \geq 0} \binom{m}{i}(\on{id}_{M} \otimes \Y_{l+i})    \Y^M_{m+n-i}$}.
\end{split}
\end{equation}
We know from \ref{def:coderivation_mod} in Definition \ref{def:cova_mod} that $(\on{id}_M \otimes \D )   \Y^M_n=-n \Y^M_{n-1}$ for all $n \in \mathbb{Z}$, so we have
\begin{align}\label{eq:YY^M_{-2-n}}
\Y^M_{-2-n}=\frac{1}{(n+1)!}(\on{id}_M \otimes \D)^n   \Y^M_{-2}
\end{align}
for all $n \geq 0$. This leads to the following lemma:

\begin{lemma}\label{lem:YY^M_{-2-n}}
Let $V$ be $(G_\Gamma, \beta, \gamma_0)$-vertex coalgebra and $M$ a $(G_\Gamma, \beta, \gamma_0)$-right $V$-comodule. Then $\R(M) \subseteq \on{Ker}\Y^M_{-2-n}$ for all $n \geq 0$. 
\end{lemma}

\begin{definition}\label{def:coalgebra_mod}
Let $(C, \delta, c)$ be a $G_\Gamma$-coalgebra. A $G_\Gamma$\textbf{-right comodule} for $C$ is a $G_\Gamma$-module $M$ with a homogeneous linear map $\delta^M:  M \longrightarrow M \otimes C$ such that
\[
\begin{tikzcd}
M \arrow[d, "\delta^M"']  \arrow[r, "\delta^M"] & M \otimes C \arrow[d, "\on{id}_M \otimes \delta"]  \\[10pt]
M \otimes C \arrow[r, "\delta^M \otimes \on{id}_C", labels=below] & M \otimes C \otimes C
\end{tikzcd} \text{ and } \begin{tikzcd}
M \arrow[dr, "\on{id}_{M}"']  \arrow[r, "\delta^M"] & M \otimes C \arrow[d, "\on{id}_M \otimes c"]  \\[10pt]
&  M
\end{tikzcd}
\]
are commutative diagrams.
\end{definition}

We will need the following lemma:
\begin{lemma}\label{lem:intersection_vector_spaces_2}
Given vector spaces $W_1 \subseteq V_1$ and $W_2 \subseteq V_2$, we have
\[
(W_1 \otimes V_2) \cap (V_1 \otimes W_2)= W_1 \otimes W_2.
\]
\end{lemma}

\begin{proof}
For $\epsilon=1, 2$, consider $(w_i^\epsilon)_{i \in I_\epsilon}$ a basis of $W_\epsilon$ and complete it into $(v_j^\epsilon)_{j \in J_\epsilon}$ a basis of $V$, where $w_i^\epsilon=v_i^\epsilon$ and $I_\epsilon \subseteq J_\epsilon$.

For $x \in W_1 \otimes V_2$, we can write $x=\sum_{i, j}x_{i, j} w_i^1 \otimes v_j^2$, and for $x \in V_1 \otimes W_2$, we can write $x=\sum_{i, j}x_{j, i}' v_j^1 \otimes w_i^2$. So if $x \in (W_1 \otimes V_2) \cap (V_1 \otimes W_2)$, then
\[
\sum_{i, j}x_{i, j} w_i^1 \otimes v_j^2=\sum_{i, j}x_{j, i}' v_j^1 \otimes w_i^2.
\]
Thus, if on the left hand side $x_{i, j} \neq 0$, then $w_i^1 \otimes v_j^2$ should appear on the right hand side, meaning that $v_j^2 \in W_2$. Likewise if $x_{j, i}' \neq 0$.

So the components with non zero coefficients are in $W_1 \otimes W_2$. Hence $(W_1 \otimes V_2) \cap (V_1 \otimes W_2) \subseteq W_1 \otimes W_2$. The other inclusion is obvious.
\end{proof}

The co-Jacobi identity \eqref{eq:coJacobi_mod_components} with $l=0$, $m=-1$, $n=-2$ and with $l=-2$, $m=1$, $n=-2$ lead to 
\[
\left\{\begin{array}{lcl}
(\Y^M_{-2} \otimes \on{id}_{V})   \Y^M_{-1}(\on{Ker}\Y^M_{-2})& = & 0, \\[5pt]
(\on{id}_{M} \otimes \Y_{-2})   \Y^M_{-1}(\on{Ker}\Y^M_{-2})& = & 0.
\end{array}\right.
\]
Set $m \in \on{Ker}\Y^M_{-2}$. We write $\Y^M_{-1}(m)=\sum_{i}m_{i} \otimes v_{i}$ with $\{m_{i}\}$ a linearly independent family. Then $0=(\on{id}_{M} \otimes \Y_{-2})   \Y^M_{-1}(m)=\sum_{i}m_{i} \otimes \Y_{-2}(v_{i})$. But as the $m_{i}$'s are linearly independent, it follows that $\Y_{-2}(v_{i})=0$ for all $i$. Hence $\Y^M_{-1}(\on{Ker}\Y^M_{-2}) \subseteq M \otimes \on{Ker}\Y_{-2}$. By doing the same with $\{v_{i}\}$ linearly independent, we get $\Y^M_{-1}(\on{Ker}\Y^M_{-2}) \subseteq  \on{Ker}\Y^M_{-2} \otimes V$. Thus
\[
\Y^M_{-1}(\on{Ker}\Y^M_{-2}) \subseteq  (M \otimes \on{Ker}\Y_{-2}) \cap (\on{Ker}\Y^M_{-2} \otimes V).
\]
By Lemma \ref{lem:intersection_vector_spaces_2} and the previously obtained inclusion, we get
\begin{align}\label{coalgebra_mod_eq:5}
\Y^M_{-1}(\on{Ker}\Y^M_{-2}) \subseteq  \on{Ker}\Y^M_{-2} \otimes \on{Ker}\Y_{-2}.
\end{align}
Hence there is a homogeneous linear map
\[
\Delta^M=\left.\Y^M_{-1}\right._{\mkern 1mu \vrule height 2ex\mkern2mu \R(M)}: \R(M) \longrightarrow  \R(M) \otimes \R(V).
\] 

We now prove the following result:

\begin{theorem}\label{thm:C_2_coalgebra_mod}
Let $V$ be a $(G_\Gamma, \beta, \gamma_0)$-vertex coalgebra and $M$ a $(G_\Gamma, \beta, \gamma_0)$-right $V$-comodule. Then the tuple $(\R(M), \Delta^M)$ is a $G_\Gamma$-right comodule for the $\beta$-cocommutative $G_\Gamma$-coalgebra $(\R(V), \Delta, c_{\R(V)})$.
\end{theorem}

\begin{proof}
By setting $l=n=-1$, $m=0$, and $m \in \on{Ker}\Y^M_{-2}$ in Equation \eqref{eq:coJacobi_mod_components}, we get
\begin{align}\label{coalgebra_mod_eq:1}
( \Y^M_{-1} \otimes \on{id}_{V})    \Y^M_{-1}(m)=(\on{id}_{M} \otimes \Y_{-1})   \Y^M_{-1}(m).
\end{align}
This proves that the first diagram in Definition \ref{def:coalgebra_mod} is commutative.

We see from the covacuum condition \eqref{eq:covacuum_mod} in Definition \ref{def:cova_mod} that $(\on{id}_M \otimes \epsilon)   \Y^M_{-1}= \on{id}_{M}$, and by quotienting we see that the second diagram in Definition \ref{def:coalgebra_mod} is commutative.
\end{proof}

\begin{definition}\label{def:Lie_coalgebra_mod}
Let $(C, \delta_0)$ be a $(G_\Gamma, \beta)$-Lie coalgebra of degree $\gamma$. A $(G_\Gamma, \beta)$\textbf{-right Lie comodule} for $C$ is a tuple $(M, \delta^M_0)$ where $M$ is a $G_\Gamma$-module (over a field of characteristic not $2$) and $\delta_0^M : M \longrightarrow M \otimes C$ is a linear map of degree $\gamma$ such that
\[
(\on{id}_M \otimes \delta_0)   \delta_0^M=(\delta_0^M \otimes \on{id}_C)  \delta_0^M - (\on{id}_M \otimes T^\beta)  (\delta_0^M \otimes \on{id}_C)   \delta_0^M.
\]
\end{definition}

The co-Jacobi identity \eqref{eq:coJacobi_mod_components} with $l=m=0$, $n=-2$ and with $l=-2$, $m=n=0$ lead to 
\[
\left\{\begin{array}{lcl}
(\Y^M_{-2} \otimes \on{id}_{V})   \Y^M_{0}(\on{Ker}\Y^M_{-2})& = & 0, \\[5pt]
(\on{id}_{M} \otimes \Y_{-2})   \Y^M_{0}(\on{Ker}\Y^M_{-2})& = & 0.
\end{array}\right.
\]
With a reasoning similar to the one used for $\Y^M_{-1}$, we show that
\begin{align}\label{coalgebra_mod_eq:2}
\Y^M_{0}(\on{Ker}\Y^M_{-2}) \subseteq  \on{Ker}\Y^M_{-2} \otimes \on{Ker}\Y_{-2}.
\end{align}
Hence we get a homogeneous linear map 
\[
\Delta^M_0=\left.\Y^M_{0}\right._{\mkern 1mu \vrule height 2ex\mkern2mu \R(M)}: \R(M) \longrightarrow  \R(M) \otimes \R(V).
\] 

By setting $l=m=n=0$ in the co-Jacobi identity \eqref{eq:coJacobi_mod_components} we get:
\begin{align*}
( \Y^M_{0} \otimes \on{id}_{V})    \Y^M_{0}-( \on{id}_{M} \otimes T^\beta)     (\Y^M_{0} \otimes \on{id}_{V})  \Y^M_{0} = (\on{id}_{M} \otimes \Y_{0})    \Y^M_{0},
\end{align*}
and this leads to
\begin{align}\label{eq:coalgebra_eq:2}
\scalebox{0.95}{$( \Delta^M_{0} \otimes \on{id}_{\R(V)})    \Delta^M_{0}-( \on{id}_{\R(M)} \otimes T^\beta)     (\Delta^M_{0} \otimes \on{id}_{\R(V)})  \Delta^M_{0} = (\on{id}_{\R(M)} \otimes \Delta_{0})    \Delta^M_{0}.$}
\end{align}
Using Equation \eqref{eq:coalgebra_eq:2} as well as Definition \ref{def:Lie_coalgebra_mod} and the proof of Theorem \ref{thm:C_2_co_Poisson}, we immediately verify the next result:

\begin{theorem}\label{thm:C_2_Lie_coalgebra_mod}
Let $V$ be a $(G_\Gamma, \beta, \gamma_0)$-vertex coalgebra and $M$ a $(G_\Gamma, \beta, \gamma_0)$-right $V$-comodule. Assume that $\beta$ satisfies Relation \eqref{relation_2}. Then $(\R(M), \Delta_0^M)$ is a $(G_\Gamma, \beta)$-right Lie comodule for the $(G_\Gamma, \beta)$-Lie coalgebra $(\R(V), \Delta_{0})$ of degree $0$.
\end{theorem}

As we introduced the notion of $(\Gamma, \beta)$-co-Poisson coalgebra, we can introduce the corresponding comodule notion:

\begin{definition}\label{def:co-Poisson_mod}
Let $(C, \delta, c, \delta_0)$ be a co-Poisson coalgebra of degree $\gamma$. A $(G_\Gamma, \beta)$-\textbf{right co-Poisson comodule} for $C$ is a triple $(M, \delta^M, \delta^M_0)$ such that $M$ is a $G_\Gamma$-module, $\delta^M: M \longrightarrow M \otimes C$ and $\delta^M_0: M \longrightarrow M \otimes C$ are homogeneous linear maps of degree $0$ and $\gamma$ respectively, and they satisfy
\begin{enumerate}[itemsep=5pt]
\item $(M, \delta^M)$ is a right comodule for the coalgebra $(C, \delta, c)$.

\item $(M, \delta^M_0)$ is a right Lie comodule for the Lie coalgebra $(C, \delta_0)$ of degree $\gamma$.

\item \label{def:coPoissoncomod3} $(\delta^M \otimes \on{id}_C)  \delta^M_0 =(\on{id}_M \otimes \delta_0)   \delta^M+(\on{id}_M \otimes T^\beta)   (\delta^M_0 \otimes \on{id}_C)   \delta^M$.

\item \label{def:coPoissoncomod4} $(\on{id}_M \otimes \delta)  \delta^M_0 =(\delta^M_0 \otimes \on{id}_C)   \delta^M + (\on{id}_M \otimes T^\beta)   (\delta^M_0 \otimes \on{id}_C)   \delta^M$.
\end{enumerate}
\end{definition}

We can now prove the next result.

\begin{theorem}\label{thm:C_2_co_Poisson_mod}
Let $V$ be a $(G_\Gamma, \beta, \gamma_0)$-vertex coalgebra and $M$ a $(G_\Gamma, \beta, \gamma_0)$-right $V$-comodule. Assume that $\beta$ satisfies Relation \eqref{relation_2}. Then the tuple $(\R(M), \Delta^M, \Delta_0^M)$ is a $(G_\Gamma, \beta)$-right co-Poisson comodule for the $\beta$-cocommutative $(G_\Gamma, \beta)$-co-Poisson coalgebra $(\R(V), \Delta, c_{\R(V)} \Delta_0)$ of degree $0$.
\end{theorem}

\begin{proof}
We know that $\R(V)$ is a $\beta$-cocommutative $(G_\Gamma, \beta)$-co-Poisson coalgebra of degree $0$ by Theorem \ref{thm:C_2_co_Poisson}. The first two points in Definition \ref{def:co-Poisson_mod} are verified by Theorems \ref{thm:C_2_coalgebra_mod} and \ref{thm:C_2_Lie_coalgebra_mod}. Then, in the Jacobi identity \eqref{eq:coJacobi_mod_components}, set $l=m=0$, $n=-1$. We get
\begin{align*}
( \Y^M_{-1} \otimes \on{id}_{V})    \Y^M_{0}-( \on{id}_{M} \otimes T^\beta)     (\Y^M_{0} \otimes \on{id}_{V})    \Y^M_{-1} = (\on{id}_{M} \otimes \Y_{0})    \Y^M_{-1},
\end{align*}
and this leads to
\begin{align*}
( \Delta^M\otimes \on{id}_{\R(V)})    \Delta^M_{0}-( \on{id}_{\R(M)} \otimes T^\beta)     (\Delta^M_{0} \otimes \on{id}_{\R(V)})    \Delta^M = (\on{id}_{\R(M)} \otimes \Delta_{0})    \Delta^M,
\end{align*}
which is exactly the third relation in Definition \ref{def:co-Poisson_mod}. The last relation is obtained by setting $l=-1$ and $m=n=0$  in the Jacobi identity \eqref{eq:Jacobi_mod_coeff}.
\end{proof}

\section{Duality for $C_2$-(co)algebras and $C_2$-(co)modules}\label{sec:duality}

\subsection{The algebra-coalgebra correspondence}
We know from Theorem \ref{thm:duality} that Harish-Chandra $(G_\Gamma, \beta, \gamma_0)$-vertex algebras and vertex coalgebra are dual notions. The pairing $\langle \cdot, \cdot \rangle$ between a vector space and its dual will help in establishing a duality between $C_2$-algebras and $C_2$-coalgebras.

Let $V$ be a Harish-Chandra $(G_\Gamma, \beta, \gamma_0)$-vertex algebra and assume that $\beta$ satisfies Relation \eqref{relation_1}. We set $C_2(V)^\perp=\{u' \in V' \ | \ u'(C_2(V))=0  \}$. By the proof of  Theorem \ref{thm:duality}, we know that for any $u' \in V'$ and $v, w \in V$, we have $\langle u', Y_{-2}(v \otimes w) \rangle=\langle \Y_{-2}(u'), v \otimes w \rangle$. It follows that
\[
C_2(V)^\perp=\on{Ker}\Y_{-2}
\]
as $G_\Gamma$-modules. We know that $R(V)=V/C_2(V)=\bigoplus_{\gamma \in \Gamma} V_\gamma/C_2(V)_\gamma$, so $R(V)'=\bigoplus_{\gamma \in \Gamma} C_2(V)_\gamma^\perp$ where $C_2(V)_\gamma^\perp=\{u' \in (V_\gamma)^* \ | \ u'(C_2(V)_\gamma)=0  \}$. We hence see that $C_2(V)^\perp=\bigoplus_{\gamma \in \Gamma} C_2(V)_\gamma^\perp$ because the $(V_\gamma)^*$ are in direct sum, and thus
\begin{align}\label{eq:duality_(co)_algebras_vector_spaces}
R(V)'=C_2(V)^\perp=\on{Ker}\Y_{-2}=\R(V')
\end{align}
as $G_\Gamma$-modules.

We have seen in Theorem \ref{thm:C_2_coalgebra} that $\R(V')$ is a $G_\Gamma$-coalgebra with coproduct given by $\Delta$. As the dual of a coalgebra is always an algebra, we have a product $\ast$ on $\R(V')'$ given by
\begin{align}\label{eq:product_from_coproduct}
\langle f \ast g, u' \rangle=\langle f \otimes g , \Delta(u') \rangle
\end{align}
for all $f, g \in \R(V')'$ and $u' \in \R(V')$. 

\begin{theorem}\label{thm:algebra_iso}
Let $V$ be a Harish-Chandra $(G_\Gamma, \beta, \gamma_0)$-vertex algebra and assume that $\beta$ satisfies Relation \eqref{relation_1}. There is an algebra isomorphism
\begin{align*}
\R(V')' \cong R(V).
\end{align*}
\end{theorem}

\begin{proof}
From Equation \eqref{eq:duality_(co)_algebras_vector_spaces} we see that $\R(V')'=R(V)''=\bigoplus_{\gamma \in \Gamma} (V_n/C_2(V)_n)^{**} \cong \bigoplus_{\gamma \in \Gamma} V_n/C_2(V)_n=R(V)$ as $R(V)$ is Harish-Chandra. So the product $\ast$ is defined on $R(V)$. For $u' \in \R(V')$, we write $\Delta(u')=\sum_i u'_{1, i} \otimes u'_{2, i}$. For any $\overline{u}, \overline{v} \in R(V)$, we have
\begin{align*}
\begin{array}{rcl}
\langle \overline{u} \ast \overline{v}, u' \rangle & = & \langle \overline{u} \otimes \overline{v},  \Delta(u') \rangle\\[5pt]
 & = & \displaystyle \sum_i \overline{u}(u'_{2, i})\overline{v}(u'_{1, i}) \\[5pt]
 & = & \displaystyle \sum_i u_{2, i}'(\overline{u})u_{1, i}'(\overline{v}) \text{ well-defined because }u_{1, i}', u_{2, i}' \in \on{Ker}\Y_{-2}=C_2(V)^\perp \\[5pt]
 & = &\langle  \Y_{-1}(u'), (u+C_2(V)) \otimes (v+C_2(V)) \rangle \\[5pt]
 & = & \langle u', Y_{-1}\big((u+C_2(V)) \otimes (v+C_2(V))\big) \rangle \\[5pt]
 & = & \langle u', Y_{-1}(u \otimes v)+C_2(V)) \text{ as } u'(C_2(V))=0 \text{ and } Y_{-1} \text{ preserves } C_2(V) \\[5pt]
 & = & \langle u', \overline{Y_{-1}(u \otimes v}) \rangle \\[5pt]
 & = & \langle \overline{Y_{-1}(u \otimes v}), u' \rangle \\[5pt]
 & = &\langle \pi(\overline{u} \otimes \overline{v}), u'\rangle.
\end{array}
\end{align*}
It follows that the isomorphism of $G_\Gamma$-modules $\R(V')' \cong R(V)$ preserves the product structures $\ast$ and $\pi$. It remains to check that it also preserves the identity elements.

Using Equation \eqref{eq:product_from_coproduct}, we see that the unit on $\R(V')'$ is $\mathbf{1}_{\R(V')'} : \cc \longrightarrow \R(V')'$ sending $1$ to $c_{\R(V')}$. For any $u' \in \R(V')$, we see from the proof of Theorem \ref{thm:duality} that $\mathbf{1}_{\R(V')'}(1)(u')=c_{\R(V')}(u')=u'(\vac)=(\vac^{**})_{| \R(V')}(u')$. It follows that $\mathbf{1}_{\R(V')'}$ sends $1$ to $(\vac^{**})_{| R(V)'}=(\mathbf{1}_{R(V)}(1))^{**}$. As $R(V)$ is Harish-Chandra, we have the identification $(\mathbf{1}_{R(V)}(1))^{**}=\mathbf{1}_{R(V)}(1)$, and so the unit on $\R(V')'$ is identified with the one of $R(V)$.
\end{proof}

We now assume that $\beta$ satisfies Relation \eqref{relation_2}. We know from Theorem \ref{thm:C_2_co_Poisson} that $\R(V')$ has a co-Poisson structure given by $\Delta_0$. We define a bilinear bracket on $\R(V')'$ by
\[
\begin{array}{cccc}
\{\cdot, \cdot\}: & \R(V')' \otimes \R(V')' & \longrightarrow & \R(V')' \\[5pt]
& f \otimes g & \longmapsto & \begin{array}[t]{cccc}
\{f, g\}: & \R(V')  & \longrightarrow & \cc \\[5pt]
& u' & \longmapsto & \langle f \otimes g,  \Delta_0(u') \rangle.
\end{array} 
\end{array}
\]

\begin{proposition}\label{prop:Lie_algebra}
Let $V$ be a Harish-Chandra $(G_\Gamma, \beta, \gamma_0)$-vertex algebra and assume that $\beta$ satisfies Relations \eqref{relation_1} and \eqref{relation_2}. Then $(\R(V')', \{\cdot, \cdot \})$ is a $(G_\Gamma, \beta)$-Lie algebra of degree $0$.
\end{proposition}

\begin{proof}
We know that $\Delta_0+T^\beta   \Delta_0=0$ by Equation \eqref{thm:C_2_co_Poisson_eq:1}. For $u' \in \R(V')$ we write $\Delta_0(u')=\sum_i u'_{0;1, i} \otimes u'_{0;2, i}$. Therefore
\[
\begin{array}{rcl}
\left(\{\cdot , \cdot\}+\{\cdot, \cdot\}   T^\beta \right)(f \otimes g)(u') & = & \langle f \otimes g+\beta(|f|, |g|)g \otimes f, \Delta_0(u') \rangle\\[5pt]
 & = & \sum_i f(u'_{0;2, i} )g(u'_{0;1, i} )+ \beta(|f|, |g|)\sum_i g(u'_{0;2, i} )f(u'_{0;1, i} ) \\[5pt]
 & = & \sum_i u'_{0;2, i}(f)u'_{0;1, i}(g)+ \beta(|f|, |g|)\sum_i u'_{0;2, i}(g) u'_{0;1, i}(f) \\[5pt]
 &&   \text{ by the identification }V \cong V''. 
\end{array}
\]
If $u'_{0;2, i}(g) u'_{0;1, i}(f) \neq 0$, then $|u'_{0;2, i}|=-|g|$ and $|u'_{0;1, i}|=-|f|$, and so $\beta(|f|, |g|)=\beta(-|u'_{0;1, i}|, -|u'_{0;2, i}|)=\beta(|u'_{0;1, i}|, |u'_{0;2, i}|)$ by the assumption on $\beta$. Hence
\[
\begin{array}{rcl}
\left(\{\cdot , \cdot\}+\{\cdot, \cdot\}   T^\beta \right)(f \otimes g)(u') & = & \sum_i u'_{0;2, i}(f)u'_{0;1, i}(g)+\sum_i \beta(|u'_{0;1, i}|, |u'_{0;2, i}|) u'_{0;2, i}(g) u'_{0;1, i}(f) \\[5pt]
 & = & \Delta_0(f \otimes g)+\sum_i \beta(|u'_{0;1, i}|, |u'_{0;2, i}|) (u'_{0;2, i} \otimes u'_{0;1, i})(f \otimes g) \\[5pt]
&= & (\Delta_0(u')+T^\beta   \Delta_0 )(u')(f \otimes g) \\[5pt]
 & = & 0.
\end{array}
\]
Thus the bracket is skew-symmetric on $\R(V')'$. Based on Definition \ref{def:Lie_algebra}, we need to verify the Jacobi identity. We have
\begin{align*}
&\left(\{\cdot, \cdot \}   (\{\cdot, \cdot\} \otimes \on{id}_{\R(V')'})   (\on{id}_{\R(V')'}+\xi^\beta+(\xi^\beta)^2)\right)( f \otimes g \otimes h)(u') \\[5pt]
&=\{\{f, g\}, h \}(u') +\beta(|f|, |g|)\beta(|f|, |h|)\{\{g, h\}, f \}(u') \\[5pt]
&\quad +\beta(|f|, |g|)\beta(|f|, |h|)\beta(|g|, |h|)\beta(|g|, |f|)\{\{h, f\}, g \} (u')\\[5pt]
&=\displaystyle \sum_i \{f, g\}(u'_{0;2, i})h(u'_{0;1, i})+\beta(|f|, |g|)\beta(|f|, |h|)\sum_i \{g, h\}(u'_{0;2, i})f(u'_{0;1, i})\\[5pt]
&\displaystyle \quad +\beta(|f|, |g|)\beta(|f|, |h|)\beta(|g|, |h|)\beta(|g|, |f|)\sum_i \{h, f\}(u'_{0;2, i})g(u'_{0;1, i}) \\[5pt]
&=\displaystyle \sum_i \sum_j f( (u'_{0;2, i})_{0;2, j}) g((u'_{0;2, i})_{0;1, j})h(u'_{0;1, i}) \\[5pt]
&\displaystyle\quad +\beta(|f|, |g|)\beta(|f|, |h|)\sum_i \sum_j  g( (u'_{0;2, i})_{0;2, j}) h((u'_{0;2, i})_{0;1, j})f(u'_{0;1, i}) \\[5pt]
&\displaystyle \quad +\beta(|f|, |g|)\beta(|f|, |h|)\beta(|g|, |h|)\beta(|g|, |f|)\sum_i \sum_j h( (u'_{0;2, i})_{0;2, j}) f((u'_{0;2, i})_{0;1, j})g(u'_{0;1, i})\\[5pt]
&=\displaystyle \sum_i \sum_j f( (u'_{0;2, i})_{0;2, j}) g((u'_{0;2, i})_{0;1, j})h(u'_{0;1, i}) \\[5pt]
&\displaystyle\quad +\sum_i \sum_j \beta(|u'_{0;1, i}|, |(u'_{0;2, i})_{0;2, j}|)\beta(|u'_{0;1, i}|, |(u'_{0;2, i})_{0;1, j}|) g( (u'_{0;2, i})_{0;2, j}) h((u'_{0;2, i})_{0;1, j})f(u'_{0;1, i}) \\[5pt]
&\displaystyle \quad +\sum_i \sum_j\beta(|(u'_{0;2, i})_{0;1, j}|, |u'_{0;1, i}|)\beta(|(u'_{0;2, i})_{0;1, j}|, |(u'_{0;2, i})_{0;2, j}|)\\[5pt]
&\quad \quad \quad \quad \quad \times \beta(|u'_{0;1, i}|, |(u'_{0;2, i})_{0;2, j}|)\beta(|u'_{0;1, i}|, |(u'_{0;2, i})_{0;1, j}|) h( (u'_{0;2, i})_{0;2, j}) f((u'_{0;2, i})_{0;1, j})g(u'_{0;1, i})\\[5pt]
&=\displaystyle \sum_i \sum_j ( u'_{0;1, i} \otimes (u'_{0;2, i})_{0;1, j} \otimes  (u'_{0;2, i})_{0;2, j}) ) (f \otimes g \otimes h) \\[5pt]
&\displaystyle\quad +\sum_i \sum_j \beta(|u'_{0;1, i}|, |(u'_{0;2, i})_{0;2, j}|)\beta(|u'_{0;1, i}|, |(u'_{0;2, i})_{0;1, j}|) ( (u'_{0;2, i})_{0;1, j}) \otimes  (u'_{0;2, i})_{0;2, j} \otimes u'_{0;1, i}) (f \otimes g \otimes h)\\[5pt]
&\displaystyle \quad +\sum_i \sum_j\beta(|(u'_{0;2, i})_{0;1, j}|, |u'_{0;1, i}|)\beta(|(u'_{0;2, i})_{0;1, j}|, |(u'_{0;2, i})_{0;2, j}|)\beta(|u'_{0;1, i}|, |(u'_{0;2, i})_{0;2, j}|)\beta(|u'_{0;1, i}|, |(u'_{0;2, i})_{0;1, j}|) \\[5pt]
&\quad \quad \quad \quad \quad \times ( (u'_{0;2, i})_{0;2, j}) \otimes u'_{0;1, i} \otimes  (u'_{0;2, i})_{0;1, j}) (f \otimes g \otimes h) \text{ using the identification with the bidual} \\[5pt]
&=\displaystyle \sum_i \sum_j ( u'_{0;1, i} \otimes (u'_{0;2, i})_{0;1, j} \otimes  (u'_{0;2, i})_{0;2, j}) ) (f \otimes g \otimes h) \\[5pt]
&\displaystyle\quad +\left(\xi^\beta\left(\sum_i \sum_j ( u'_{0;1, i} \otimes (u'_{0;2, i})_{0;1, j} \otimes  (u'_{0;2, i})_{0;2, j}) \right)\right) (f \otimes g \otimes h)\\[5pt]
&\displaystyle \quad +\left((\xi^\beta)^2 \left(\sum_i \sum_j ( u'_{0;1, i} \otimes (u'_{0;2, i})_{0;1, j} \otimes  (u'_{0;2, i})_{0;2, j}) )\right)\right) (f \otimes g \otimes h)\\[5pt]
&=\left((\on{id}_{\R(V')^{\otimes 3}} +\xi^\beta+(\xi^\beta)^2)   ( \on{id}_{\R(V')} \otimes \Delta_0)   \Delta_0(u')\right)(f \otimes g \otimes h). \\[5pt]
&=0 \text{ by Equation \eqref{thm:C_2_co_Poisson_eq:6}}.
\end{align*}
It follows that the bracket satisfies the Jacobi identity.
\end{proof}

\begin{proposition}\label{prop:dualPoisson}
Let $V$ be a Harish-Chandra $(G_\Gamma, \beta, \gamma_0)$-vertex algebra and assume that $\beta$ satisfies Relations \eqref{relation_1} and \eqref{relation_2}. Then $(\R(V')', \ast,  \{\cdot, \cdot \})$ is a $\beta$-commutative $(G_\Gamma, \beta)$-Poisson algebra of degree $0$.
\end{proposition}

\begin{proof}
We fix the notation $\Delta_0(u')=\sum_i u'_{0; 1, i} \otimes u'_{0; 2, i}$ and $\Delta(u')=\sum_i u'_{1, i} \otimes u'_{2, i}$ for any $u' \in \R(V')$. Moreover, we write $\pi_\ast$ for the map $\pi_\ast(f \otimes g)=f \ast g$. Then we have
\begin{align*}
&\left(\{\cdot, \cdot\}   ( \on{id}_{\R(V')'} \otimes \pi_\ast) - \pi_\ast   (\{\cdot, \cdot\} \otimes \on{id}_{\R(V')'})+\pi_\ast   (\on{id}_{\R(V')'} \otimes \{\cdot, \cdot\})   \xi^\beta\right)(f \otimes g \otimes h)(u')  \\[5pt]
&=\left(\{f, g \ast h\}-\{f, g\} \ast h+\beta(|f|, |g|)\beta(|f|, |h|)g \ast \{h, f\}\right)(u') \\[5pt]
&=\langle f \otimes (g \ast h), \sum_i u'_{0; 1, i} \otimes u'_{0; 2, i} \rangle - \langle \{f ,g\}\otimes h ,\sum_i u'_{ 1, i} \otimes u'_{ 2, i}\rangle \\[5pt]
& \quad +\beta(|f|, |g|)\beta(|f|, |h|)\langle g \otimes \{h, f\}, \sum_i u'_{ 1, i} \otimes u'_{ 2, i}\rangle, \\[5pt]
&=\sum_i  f(u'_{0; 2, i}) (g \ast h)(u'_{0; 1, i}) -\sum_i  \{f ,g\}(u'_{ 2, i})h(u'_{ 1, i} )+\sum_i  \beta(|f|, |g|)\beta(|f|, |h|)g(u'_{ 2, i})\{h, f\}(u'_{ 1, i}) \\[5pt]
&=\sum_i \sum_j f(u'_{0; 2, i}) g((u'_{0; 1, i})_{2, j}) h((u'_{0; 1, i})_{1, j}) -\sum_i \sum_j f((u'_{ 2, i})_{0; 2, j})g((u'_{ 2, i})_{0; 1, j})h(u'_{ 1, i} ) \\[5pt]
& \quad +\sum_i \sum_j \beta(|f|, |g|)\beta(|f|, |h|)g(u'_{ 2, i})h((u'_{ 1, i})_{0; 2, j}) f((u'_{ 1, i})_{0; 1, j}) \\[5pt]
&=\sum_i \sum_j f(u'_{0; 2, i}) g((u'_{0; 1, i})_{2, j}) h((u'_{0; 1, i})_{1, j}) -\sum_i \sum_j f((u'_{ 2, i})_{0; 2, j})g((u'_{ 2, i})_{0; 1, j})h(u'_{ 1, i} ) \\[5pt]
& \quad +\sum_i \sum_j \beta(|(u'_{ 1, i})_{0; 1, j}|, |u'_{ 2, i}|)\beta(|(u'_{ 1, i})_{0; 1, j}|, |(u'_{ 1, i})_{0; 2, j}|)g(u'_{ 2, i})h((u'_{ 1, i})_{0; 2, j}) f((u'_{ 1, i})_{0; 1, j}) \\[5pt]
&=\left((\Delta \otimes \on{id}_{\R(V')})   \Delta_0\right)(u')(f \otimes g \otimes h)-\left((\on{id}_{\R(V')} \otimes \Delta_0)   \Delta \right)(u')(f \otimes g \otimes h) \\[5pt]
&\quad + \left(\xi^\beta   (\Delta_0 \otimes \on{id}_{\R(V')})   \Delta \right)(u')(f \otimes g \otimes h) \\[5pt]
&=\big[(\Delta \otimes \on{id}_{\R(V')})   \Delta_0-(\on{id}_{\R(V')} \otimes \Delta_0)   \Delta + \xi^\beta   (\Delta_0 \otimes \on{id}_{\R(V')})   \Delta \big](u')(f \otimes g \otimes h), \\[5pt]
&=0 \text{ by Theorem }\ref{thm:C_2_co_Poisson}.
\end{align*}

It follows that
\[
\{\cdot, \cdot\}   ( \on{id}_{\R(V')'} \otimes \pi_\ast) =\pi_\ast   (\{\cdot, \cdot\} \otimes \on{id}_{\R(V')'})-\pi_\ast   (\on{id}_{\R(V')'} \otimes \{\cdot, \cdot\})   \xi^\beta.
\]
With Theorem \ref{thm:algebra_iso}, Proposition \ref{prop:Lie_algebra} and the above equation, we conclude the proof.
\end{proof}

Assume that $\beta$ satisfies Relation \eqref{relation_2}. By Theorem \ref{thm:C_2_Poisson}, $R(V)$ is a $(G_\Gamma, \beta)$-Poisson algebra of degree $0$, so we can compare the Poisson structure $\{\cdot, \cdot\}$ of $\R(V')'$ and the Poisson structure $\pi_0$ of $R(V)$. Using the bidual identification, we have
\begin{longtable}{RCL}
\{\overline{u}, \overline{v}\}(u')& = & \langle \overline{u} \otimes \overline{v}, \Delta_0(u') \rangle\\[5pt]
 & = &\displaystyle  \langle \overline{u} \otimes \overline{v}, \sum_i u'_{0;1, i} \otimes u'_{0; 2, i} \rangle \\[5pt]
 & = & \displaystyle \sum_i u'_{0;1, i}(\overline{v})u'_{0; 2, i}(\overline{u}), \text{ which is well-defined because for } \epsilon=1, 2, \\[5pt]
 &  &  u_{0;\epsilon, i}' \in \on{Ker}\Y_{-2} \text{ so }u_{0;\epsilon,  i}' (C_2(V))=0  \\[5pt]
 & = & \langle \Delta_0(u'), (u+C_2(V)) \otimes (v+C_2(V)) \rangle \\[5pt]
 & = & \langle u', Y_0\left((u+C_2(V) \otimes (v+C_2(V))\right) \rangle  \text{ by Theorem \ref{thm:duality}} \\[5pt] 
 & = & \langle u', Y_0(u \otimes v)+C_2(V) \rangle \text{ by Equations \eqref{eq:Y_0_C_2:1} and \eqref{eq:Y_0_C_2:2}} \\[5pt]
 & = & \langle u', \pi_0(\overline{u} \otimes \overline{v}) \rangle \text{ which is well-defined because }u' \in \on{Ker}\Y_{-2}, \\[5pt]
 & = & \pi_0(\overline{u} \otimes \overline{v})(u').
\end{longtable}
It follows that, through the bidual identification, the Poisson structures of $\R(V')'$ and $R(V)$ are identical. With this, Theorem \ref{thm:algebra_iso}, and Proposition \ref{prop:dualPoisson}, we have therefore proved:

\begin{theorem}\label{thm:Poisson_iso}
Let $V$ be a Harish-Chandra $(G_\Gamma, \beta, \gamma_0)$-vertex algebra and assume that $\beta$ satisfies Relations \eqref{relation_1} and \eqref{relation_2}. There is an isomorphism of $(G_\Gamma, \beta)$-Poisson algebras of degree $0$
\[
\R(V')' \cong R(V).
\]
\end{theorem}

We could have done the same thing but starting from a Harish-Chandra $(G_\Gamma, \beta, \gamma_0)$-vertex coalgebra. This would lead to 

\begin{proposition}\label{prop:dual_co_Poisson}
Let $V$ be a Harish-Chandra $(G_\Gamma, \beta, \gamma_0)$-vertex coalgebra and assume that $\beta$ satisfies Relations \eqref{relation_1} and \eqref{relation_2}. Then $R(V')'$ is a $\beta$-cocommutative $(G_\Gamma, \beta)$-co-Poisson coalgebra of degree $0$.
\end{proposition}

\begin{theorem}\label{thm:co_Poisson_iso}
Let $V$ be a Harish-Chandra $(G_\Gamma, \beta, \gamma_0)$-vertex coalgebra and assume that $\beta$ satisfies Relations \eqref{relation_1} and \eqref{relation_2}. There is an isomorphism of $(G_\Gamma, \beta)$-co-Poisson coalgebras of degree $0$
\[
R(V')' \cong \R(V).
\]
\end{theorem}

\subsection{The module-comodule correspondence}
Let $V$ be a Harish-Chandra $(G_\Gamma, \beta, \gamma_0)$-vertex algebra and $M$ a Harish-Chandra $(G_\Gamma, \beta, \gamma_0)$-left $V$-module. Assume that $\beta$ satisfies Relation \eqref{relation_1}. We set $C_2(M)^\perp=\{w' \in w' \ | \ w'(C_2(M))=0  \}$. But by the proof of  Theorem \ref{thm:duality_mod}, we know that for any $w' \in M'$ and $v \in V$, and $m \in M$, we have $\langle w', Y^M_{-2}(v \otimes m) \rangle=\langle \Y^{M'}_{-2}(w'), v \otimes m \rangle$. It follows that
\[
C_2(M)^\perp=\on{Ker}\Y^{M'}_{-2}
\]
as $G_\Gamma$-modules. We know that $R(M)=M/C_2(M)=\bigoplus_{\gamma \in \Gamma} M_\gamma/C_2(M)_\gamma$, and so $R(M)'=\bigoplus_{\gamma \in \Gamma} (M_\gamma/C_2(M)_\gamma)^*=\bigoplus_{\gamma \in \Gamma} C_2(M)_\gamma^\perp$ where $C_2(M)_\gamma^\perp=\{w' \in (M_\gamma)^* \ | \ w'(C_2(M)_\gamma)=0  \}$. We see that $C_2(M)^\perp=\bigoplus_{\gamma \in \Gamma} C_2(M)_\gamma^\perp$ because the $(M_\gamma)^*$ are in direct sum, and thus
\begin{align}\label{eq:duality_(co)_algebras_vector_spaces_mod}
R(M)'=C_2(M)^\perp=\on{Ker}\Y^{M'}_{-2}=\R(M')
\end{align}
as $G_\Gamma$-modules.

We have seen in Theorem \ref{thm:C_2_coalgebra_mod} that $\R(M')$ is a $G_\Gamma$-right comodule for $\R(V')$ and the coaction is given by $\Delta^{M'}$. We can make $\R(M')'$ into a left module for the algebra $\R(V')'$ as follows:
\begin{align}\label{eq:product_from_coproduct_mod}
\langle f_V \ast_M g_M, m' \rangle=\langle f_V \otimes g_M , \Delta^{M'}(m') \rangle
\end{align}
for all $f_V \in R(V')'$, $g_M \in \R(M')'$ and $m' \in \R(M')$. 

\begin{theorem}\label{thm:algebra_iso_mod}
Let $V$ be a Harish-Chandra $(G_\Gamma, \beta, \gamma_0)$-vertex algebra, $M$ a Harish-Chandra $(G_\Gamma, \beta, \gamma_0)$-left $V$-module, and assume that $\beta$ satisfies Relation \eqref{relation_1}. There is an isomorphism of algebra modules
\begin{align*}
\R(M')' \cong R(M).
\end{align*}
\end{theorem}

\begin{proof}
From Equation \eqref{eq:duality_(co)_algebras_vector_spaces_mod} we see that $\R(M')'=R(M)''=\bigoplus_{\gamma \in \Gamma} (M_n/C_2(M)_n)^{**} \cong \bigoplus_{\gamma \in \Gamma} M_n/C_2(M)_n=R(M)$ as $R(M)$ is Harish-Chandra. So $\ast_M$ defines an action of $R(V)$ on $R(M)$ using the isomorphism in Theorem \ref{thm:Poisson_iso}. For $m' \in \R(M')$, we write $\Delta^{M'}(m')=\sum_i m'_{i} \otimes v'_{i}$. For any $\overline{v} \in R(V)$, $\overline{w} \in R(M)$, we have
\begin{align*}
\begin{array}{rcl}
\langle \overline{v} \ast_M \overline{w}, m' \rangle & = & \langle \overline{v} \otimes \overline{w},  \Delta^{M'}(m') \rangle\\[5pt]
 & = & \displaystyle \sum_i \overline{v}(v'_{i})\overline{w}(m'_{i}) \\[5pt]
 & = & \displaystyle \sum_i v_{i}'(\overline{v})m_{i}'(\overline{w}) \text{ well-defined because } m_{i}' \in C_2(M)^\perp, v_{i}' \in C_2(V)^\perp \\[5pt]
 & = &\langle  \Y_{-1}^{M'}(m'), (v+C_2(V)) \otimes (w+C_2(M)) \rangle \\[5pt]
 & = & \langle m', Y_{-1}^{M}\big((v+C_2(V)) \otimes (w+C_2(M))\big) \rangle \\[5pt]
 & = & \langle m', Y_{-1}^{M}(v \otimes w)+C_2(M)) \text{ because } m'(C_2(M))=0 \\[5pt]
  && \quad  \text{ and } Y_{-1}^{M'} \text{ preserves } C_2(M) \\[5pt]
 & = & \langle m', \overline{Y_{-1}^{M}(v \otimes w}) \rangle \\[5pt]
 & = & \langle \overline{Y_{-1}^{M}(v \otimes w}), m' \rangle \\[5pt]
 & = &\langle \pi^{M}(\overline{v} \otimes \overline{w}), m'\rangle.
\end{array}
\end{align*}
It follows that the isomorphism of $G_\Gamma$-modules $\R(M')' \cong R(M)$ preserves the module structures $\ast_M$ and $\pi^M$.
\end{proof}

We now assume that $\beta$ satisfies Relation \eqref{relation_2}. We know from Theorem \ref{thm:C_2_co_Poisson_mod} that $\R(M')$ is a co-Poisson comodule for the co-Poisson coalgebra $\R(V')$ and the Lie coaction is given by $\Delta^{M'}_0$. We the following bilinear bracket 
\[
\begin{array}{cccc}
\{\cdot, \cdot\}_M: & \R(V')' \otimes \R(M')' & \longrightarrow & \R(M')' \\[5pt]
& f_V \otimes g_M & \longmapsto & \begin{array}[t]{cccc}
\{f_V, g_M\}_M: & \R(M')  & \longrightarrow & \cc \\[5pt]
& m' & \longmapsto & \langle f_V \otimes g_M,  \Delta_0^{M'}(m') \rangle.
\end{array} 
\end{array}
\]

\begin{proposition}\label{prop:Lie_algebra_mod}
Let $V$ be a Harish-Chandra $(G_\Gamma, \beta, \gamma_0)$-vertex algebra, $M$ a $(G_\Gamma, \beta, \gamma_0)$-left $V$-module, and assume that $\beta$ satisfies Relations \eqref{relation_1} and \eqref{relation_2}. Then the tuple $(\R(M')', \{\cdot, \cdot \}_M)$ is a $(G_\Gamma, \beta)$-left Lie module for the $(G_\Gamma, \beta)$-Lie algebra $(\R(V'), \{\cdot, \cdot\})$ of degree $0$.
\end{proposition}

\begin{proof}
We know from Theorem \ref{prop:Lie_algebra} that $(\R(V'), \{\cdot, \cdot\})$ is a $(G_\Gamma, \beta)$-Lie algebra of degree $0$. We need to verify the equation given in Definition \ref{def:Lie_algebra_mod}. Set $\Delta_0(v')=\sum_i v'_{0;i,1} \otimes v'_{0;i,2}$ and $\Delta_0^{M'}(m')=\sum_i m'_{0;i, 1} \otimes m'_{0;i, 2}$ (note that $m'_{0;i, 2} \in \R(V')$). We have
\begin{align*}
&\left[\{\cdot, \cdot \}_M   (\{\cdot, \cdot\} \otimes \on{id}_{\R(M')'}) - \{\cdot, \cdot \}_M   (\on{id}_{\R(V')'} \otimes \{\cdot, \cdot\}_M)\right. \\[5pt]
& \quad +\left.(\on{id}_{\R(V')'} \otimes \{\cdot, \cdot\}_M)   (T^\beta \otimes \on{id}_{\R(M')'})\right]( f_V \otimes g_V \otimes h_M)(m') \\[5pt]
&=\{\{f_V, g_V\}, h_M \}_M(m')-\{f_V, \{g_V, h_M\}_M\}_M(m')+\beta(|f_V|, |g_V|)\{g_V, \{f_V, h_M\}_M\}_M \\[5pt]
&=\langle \{f_V, g_V\} \otimes h_M, \Delta_0^{M'}(m') \rangle-\langle f_V \otimes  \{g_V,  h_M\}_M, \Delta_0^{M'}(m') \rangle+\beta(|f_V|, |g_V|)\langle g_V \otimes  \{f_V,  h_M\}_M, \Delta_0^{M'}(m') \rangle \\[5pt]
&=\sum_i \{f_V, g_V\}(m'_{0;i, 2}) h_M(m'_{0;i, 1})-\sum_i f_V(m'_{0;i, 2}) \{g_V, h_M\}_M(m'_{0;i, 1}) \\[5pt]
&\quad +\beta(|f_V|, |g_V|)\sum_i g_V(m'_{0;i, 2}) \{f_V, h_M\}_M(m'_{0;i, 1}) \\[5pt]
&=\sum_i \sum_j f_V((m'_{0;i, 2})_{0; j, 2} g_V((m'_{0;i, 2})_{0;j, 1}) h_M(m'_{0;i, 1})-\sum_i \sum_j f_V(m'_{0;i, 2}) g_V((m'_{0;i, 1})_{0; j, 2}), h_M((m'_{0;i, 1})_{0; j, 1})\\[5pt]
&\quad +\beta(|f_V|, |g_V|)\sum_i \sum_j g_V(m'_{0;i, 2}) f_V((m'_{0;i, 1})_{0; j, 2}), h_M((m'_{0;i, 1})_{0; j, 1}) \\[5pt]
&=\sum_i \sum_j \left( m'_{0;i, 1} \otimes (m'_{0;i, 2})_{0;j, 1} \otimes (m'_{0;i, 2})_{0; j, 2}\right)(f_V \otimes g_V \otimes h_M) \\[5pt]
& \quad -\sum_i \sum_j \left((m'_{0;i, 1})_{0; j, 1} \otimes (m'_{0;i, 1})_{0; j, 2} \otimes m'_{0;i, 2}\right) (f_V \otimes g_V \otimes h_M)\\[5pt]
&\quad +\sum_i \sum_j \beta(|(m'_{0;i,1})_{0;j,2}|, |m'_{0;i,2}|)\left((m'_{0;i, 1})_{0; j, 1} \otimes  m'_{0;i, 2} \otimes (m'_{0;i, 1})_{0; j, 2}\right) (f_V \otimes g_V \otimes h_M)\\[5pt]
&=(\on{id}_{\R(M')} \otimes \Delta_0 )   \Delta_0^{M'}(m')(f_V \otimes g_V \otimes h_M) \\[5pt]
& \quad - (\Delta_0^{M'} \otimes \on{id}_{\R(V')})   \Delta_0^{M'}(m') (f_V \otimes g_V \otimes h_M)\\[5pt]
&\quad +\left[\sum_i \sum_j (\on{id}_{\R(M')} \otimes T^\beta) \left((m'_{0;i, 1})_{0; j, 1} \otimes  (m'_{0;i, 1})_{0; j, 2} \otimes m'_{0;i, 2} \right)\right] (f_V \otimes g_V \otimes h_M)\\[5pt]
&=(\on{id}_{\R(M')} \otimes \Delta_0 )   \Delta_0^{M'}(m')(f_V \otimes g_V \otimes h_M) - (\Delta_0^{M'} \otimes \on{id}_{\R(V')})   \Delta_0^{M'}(m') (f_V \otimes g_V \otimes h_M)\\[5pt]
&\quad +(\on{id}_{\R(M')} \otimes T^\beta)   (\Delta_0^{M'} \otimes \on{id}_{\R(V')})   \Delta_0^{M'}(m') (f_V \otimes g_V \otimes h_M)\\[5pt]
&=0 \text{ by Equation \eqref{eq:coalgebra_eq:2}}.
\end{align*}
\end{proof}

\begin{proposition}\label{prop:dualPoisson_mod}
Let $V$ be a Harish-Chandra $(G_\Gamma, \beta, \gamma_0)$-vertex algebra, $M$ a Harish-Chandra $(G_\Gamma, \beta, \gamma_0)$-left $V$-module, and assume that $\beta$ satisfies Relations \eqref{relation_1} and \eqref{relation_2}. Then $(\R(M')', \ast_M,  \{\cdot, \cdot \}_M)$ is a $(G_\Gamma, \beta)$-left Poisson module for the $\beta$-commutative $(G_\Gamma, \beta)$-Poisson algebra  $(\R(V')', \ast,  \{\cdot, \cdot \})$ of degree $0$.
\end{proposition}

\begin{proof}
We fix the notation $\Delta_0^{M'}(m')=\sum_i m'_{0,M; i, 1} \otimes m'_{0,M; i, 2}$, $\Delta^{M'}(m')=\sum_i m'_{M; i, 1} \otimes m'_{M; i, 2}$, $\Delta_0(v')=\sum_i v'_{0,V; i, 1} \otimes v'_{0,V; i, 2}$, $\Delta(v')=\sum_i v'_{V; i, 1} \otimes v'_{V; i, 2}$ for any $v' \in \R(V'), m' \in \R(M')$. Moreover, we write $\pi_{\ast_M}$ for the map $\pi_{\ast_M}(f_V \otimes g_M)=f_V \ast_M g_M$. Then we have
\begin{align*}
&\left[\{\cdot, \cdot\}_M   ( \on{id}_{\R(V')'} \otimes \pi_{\ast_M}) - \pi_{\ast_M}   (\{\cdot, \cdot\} \otimes \on{id}_{\R(M')'})\right. \\[5pt]
&-\left.\pi_{\ast_M}   (\on{id}_{\R(V')'} \otimes \{\cdot, \cdot\}_M)   (T^\beta \otimes \on{id}_{\R(M')'})\right](f_V \otimes g_V \otimes h_M)(m')  \\[5pt]
&=\{f_V, g_V \ast_M h_M\}_M (m') - (\{f_V, g_V\} \ast_M h_M) (m') -\beta(|f_V|, |g_V|) (g_V \ast_M  \{f_V, h_M\}_M) (m')  \\[5pt]
&=\sum_i f_V(m'_{0, M;2, i}) (g_V \ast_M h_M)(m'_{0, M;1, i}) \\[5pt]
& \quad - \sum_i (\{f_V, g_V\}(m'_{M; i, 2}) h_M(m'_{M; i, 1})  \\[5pt]
& \quad -\beta(|f_V|, |g_V|) \sum_i g_V(m'_{M; i, 2})   \{f_V, h_M\}_M(m'_{M; i, 1})  \\[5pt]
&=\sum_i \sum_j f_V(m'_{0, M;2, i}) g_V((m'_{0, M;1, i})_{M; j, 2})  h_M)((m'_{0, M;1, i})_{M; j, 1}) \\[5pt]
& \quad - \sum_i \sum_j f_V((m'_{M; i, 2})_{0,V; j, 2}) g_V((m'_{M; i, 2})_{0, V; j, 1}) h_M(m'_{M; i, 1}) \\[5pt]
& \quad -\beta(|f_V|, |g_V|) \sum_i \sum_j g_V(m'_{M; i, 2})   f_V((m'_{M; i, 1})_{0, M; j, 2}) h_M((m'_{M; i, 1})_{0, M; j, 1})  \\[5pt]
&=\sum_i \sum_j ((m'_{0, M;1, i})_{M; j, 1} \otimes (m'_{0, M;1, i})_{M; j, 2} \otimes m'_{0, M;2, i})(f_V \otimes g_V \otimes h_M) \\[5pt]
& \quad - \sum_i \sum_j (m'_{M; i, 1} \otimes (m'_{M; i, 2})_{0, V; j, 1} \otimes (m'_{M; i, 2})_{0,V; j, 2}) (f_V \otimes g_V \otimes h_M) \\[5pt]
& \quad - \sum_i \sum_j \beta(|(m'_{M; i, 1})_{0, M; j, 2}|, |m'_{M; i, 2}|) ((m'_{M; i, 1})_{0, M; j, 1} \otimes m'_{M; i, 2} \otimes (m'_{M; i, 1})_{0, M; j, 2})(f_V \otimes g_V \otimes h_M)  \\[5pt]
&= (\Delta^{M'} \otimes \on{id}_{\R(V')})  \Delta_0^{M'} (m')(f_V \otimes g_V \otimes h_M) \\[5pt]
& \quad - (\on{id}_{\R(M')} \otimes \Delta_0)   \Delta^{M'} (m')(f_V \otimes g_V \otimes h_M) \\[5pt]
&  \quad - \left[(\on{id}_{\R(M')} \otimes T^\beta)   (\Delta_0^{M'} \otimes \on{id}_{\R(V')})   \Delta^{M'}(m')\right](f_V \otimes g_V \otimes h_M)  \\[5pt]
&=0 \text{ by the proof of Theorem }\ref{thm:C_2_co_Poisson_mod}.
\end{align*}
It follows that
\[
\{\cdot, \cdot\}_M   ( \on{id}_{\R(V')'} \otimes \pi_{\ast_M}) - \pi_{\ast_M}   (\{\cdot, \cdot\} \otimes \on{id}_{\R(M')'}) -\pi_{\ast_M}   (\on{id}_{\R(V')'} \otimes \{\cdot, \cdot\}_M)   (T^\beta \otimes \on{id}_{\R(M')'})=0,
\]
and so Equation \eqref{def:Poissonmod3} in Definition \ref{def:Poisson_mod} is satisfied. For the last relation, we see that
\begin{align*}
&\left[\{\cdot, \cdot\}_M   (\pi_\ast \otimes \on{id}_{\R(M')'}) - \pi_{\ast_M}   (\on{id}_{\R(V')'} \otimes \{\cdot, \cdot\}_M)\right. \\[5pt]
&-\left.\pi_{\ast_M}   (\on{id}_{\R(V')'} \otimes \{\cdot, \cdot\}_M)   (T^\beta \otimes \on{id}_{\R(M')'})\right](f_V \otimes g_V \otimes h_M)(m')  \\[5pt]
&=\{f_V \ast g_V, h_M\}_M(m')  - (f_V \ast_M \{g_V, h_M\}_M)(m')- \beta(|f_V|, |g_V|)(g_V \ast_M \{f_V, h_M\}_M)(m') \\[5pt]
&=\sum_i (f_V \ast g_V)(m'_{0, M ;i, 2}) h_M(m'_{0, M ;i, 1})   \\[5pt]
& \quad - \sum_i f_V(m'_{M; i, 2})  \{g_V, h_M\}_M(m'_{M; i, 1}) \\[5pt]
& \quad- \beta(|f_V|, |g_V|) \sum_i  g_V(m'_{M; i, 2})  \{f_V, h_M\}_M(m'_{M; i, 1}) \\[5pt]
&=\sum_i \sum_j f_V((m'_{0, M ;i, 2})_{V; j, 2})g_V((m'_{0, M ;i, 2})_{V; j, 1}) h_M(m'_{0, M ;i, 1})   \\[5pt]
& \quad - \sum_i \sum_j f_V(m'_{M; i, 2})  g_V((m'_{M; i, 1})_{0, M; j, 2}) h_M((m'_{M; i, 1})_{0, M; j, 1}) \\[5pt]
& \quad- \beta(|f_V|, |g_V|) \sum_i \sum_j  g_V(m'_{M; i, 2})  f_V((m'_{M; i, 1})_{0, M; j, 2}) h_M((m'_{M; i, 1})_{0, M; j, 1}) \\[5pt]
&=\sum_i \sum_j (m'_{0, M; i, 1} \otimes (m'_{0, M; i, 2})_{V; j, 1} \otimes   (m'_{0, M; i, 2})_{V; j, 2})(f_V \otimes g_V \otimes h_M) \\[5pt]
& \quad - \sum_i \sum_j ( (m'_{M; i, 1})_{0, M; j, 1} \otimes (m'_{M; i, 1})_{0, M; j, 2} \otimes m'_{M; i, 2})(f_V \otimes g_V \otimes h_M)\\[5pt]
& \quad-  \sum_i \sum_j  \beta(|(m'_{M; i, 1})_{0, M; j, 2}|, |m'_{M; i, 2} |) ((m'_{M; i, 1})_{0, M; j, 1} \otimes m'_{M; i, 2} \otimes (m'_{M; i, 1})_{0, M; j, 2}) (f_V \otimes g_V \otimes h_M) \\[5pt]
&=(\on{id}_{\R(M')} \otimes \Delta)   \Delta^{M'}_0(m')(f_V \otimes g_V \otimes h_M) \\[5pt]
& \quad - (\Delta_0^{M'} \otimes \on{id}_{\R(V')})   \Delta^{M'}(m')(f_V \otimes g_V \otimes h_M) \\[5pt]
& \quad-    [(\on{id}_{\R(M')} \otimes T^\beta) (\Delta_0^{M'} \otimes \on{id}_{\R(V')})   \Delta^{M'}(m')] (f_V \otimes g_V \otimes h_M) \\[5pt]
&=0 \text{ by the proof of Theorem }\ref{thm:C_2_co_Poisson_mod}.
\end{align*}
It follows that
\[
\{\cdot, \cdot\}_M   (\pi_\ast \otimes \on{id}_{\R(M')'}) - \pi_{\ast_M}   (\on{id}_{\R(V')'} \otimes \{\cdot, \cdot\}_M)-\pi_{\ast_M}   (\on{id}_{\R(V')'} \otimes \{\cdot, \cdot\}_M)   (T^\beta \otimes \on{id}_{\R(M')'})=0,
\]
and so Equation \eqref{def:Poissonmod4} in Definition \ref{def:Poisson_mod} is satisfied. With Theorem \ref{thm:algebra_iso_mod}, we conclude the proof.
\end{proof}

Assume that $\beta$ satisfies Relation \eqref{relation_2}. By Theorem \ref{thm:C_2_Poisson_mod}, $R(M)$ is a $(G_\Gamma, \beta)$-left Poisson module for $R(V)$, so using the isomorphism in Theorem \ref{thm:Poisson_iso}, we can compare the Poisson module structure $\{\cdot, \cdot\}_M$ of $\R(M')'$ and the Poisson module structure $\pi^M_0$ of $R(M)$. Using the bidual identification, we have
\begin{longtable}{RCL}
\{\overline{v}, \overline{w}\}_M(m')& = & \langle \overline{v} \otimes \overline{w}, \Delta^{M'}_0(m') \rangle\\[5pt]
 & = &\displaystyle  \langle \overline{v} \otimes \overline{w}, \sum_i m'_{0, M; i, 1} \otimes m'_{0, M; i, 2} \rangle \\[5pt]
 & = & \displaystyle \sum_i m'_{0, M; i, 1}(\overline{w})m'_{0, M; i, 2}(\overline{v})\\[5pt]
 & = & \langle \Delta_0^{M'}(m'), (v+C_2(V)) \otimes (w+C_2(M)) \rangle \\[5pt]
 & = & \langle m', Y_0^{M}\left((v+C_2(V) \otimes (w+C_2(V))\right) \rangle  \text{ by Theorem \ref{thm:duality_mod}} \\[5pt] 
 & = & \langle m', Y_0^M(v \otimes w)+C_2(M) \rangle \text{ by Equations \eqref{eq:Y_0M_C_2:1} and \eqref{eq:Y_0M_C_2:2}} \\[5pt]
 & = & \langle m', \pi_0^M(\overline{v} \otimes \overline{w}) \rangle \text{ which is well-defined because }u' \in \on{Ker}\Y_{-2}, \\[5pt]
 & = & \pi_0^M(\overline{v} \otimes \overline{w})(m').
\end{longtable}
It follows that, through the bidual identification and Theorem \ref{thm:Poisson_iso}, the Poisson module structures of $\R(M')'$ and $R(M)$ are identical. With this, Theorem \ref{thm:algebra_iso_mod}, and Proposition \ref{prop:dualPoisson_mod}, we have therefore proved:
\

\begin{theorem}\label{thm:Poisson_iso_mod}
Let $V$ be a Harish-Chandra $(G_\Gamma, \beta, \gamma_0)$-vertex algebra, $M$ a Harish-Chandra $(G_\Gamma, \beta, \gamma_0)$-left $V$-module, and assume that $\beta$ satisfies Relations \eqref{relation_1} and \eqref{relation_2}. There is an isomorphism of $(G_\Gamma, \beta)$-left Poisson modules
\[
\R(M')' \cong R(M)
\]
where $R(M')'$ is seen as an $R(V)$-module through the isomorphism in Theorem \ref{thm:Poisson_iso}.
\end{theorem}

We could have done the same thing but starting from a Harish-Chandra $(G_\Gamma, \beta, \gamma_0)$-right comodule for a Harish-Chandra $(G_\Gamma, \beta, \gamma_0)$-vertex coalgebra. This would lead to 

\begin{proposition}\label{prop:dual_co_Poisson_mod}
Let $V$ be a Harish-Chandra $(G_\Gamma, \beta, \gamma_0)$-vertex coalgebra, $M$ a Harish-Chandra $(G_\Gamma, \beta, \gamma_0)$-right $V$-comodule, and assume that $\beta$ satisfies Relations \eqref{relation_1} and \eqref{relation_2}. Then $R(M')'$ is a $(G_\Gamma, \beta)$-right co-Poisson comodule for the $\beta$-commutative $(G_\Gamma, \beta)$-Poisson algebra  $\R(V')'$ of degree $0$.
\end{proposition}

\begin{theorem}\label{thm:co_Poisson_iso_mod}
Let $V$ be a Harish-Chandra $(G_\Gamma, \beta, \gamma_0)$-vertex coalgebra, $M$ a Harish-Chandra $(G_\Gamma, \beta, \gamma_0)$-right $V$-comodule,  and assume that $\beta$ satisfies Relations \eqref{relation_1} and \eqref{relation_2}. There is an isomorphism of $(G_\Gamma, \beta)$-co-Poisson comodules
\[
R(M')' \cong \R(M)
\]
where $R(M')'$ is seen as an $\R(V)$-comodule through the isomorphism in Theorem \ref{thm:co_Poisson_iso}.
\end{theorem}

\delete{
{\color{red} 
\section{Final remarks}

$\bullet$ Other functor for the dual where the space does not change.

$\bullet$ Contra-module.

$\bullet$ Zhu algebra (needs a filtration, not a gradation). There should be a Zhu coalgebra.

$\bullet$ dg is an extra version of $\Gamma'=\Gamma \times \mathbb{Z}$ with extra dg condition in the $\mathbb{Z}$-part. Not done here, but later on we will work in a more general context of a monoidal category (\cite{Caradot-Lin-7}).

}
}
\delete{
\section*{Acknowledgments}
This work was initiated while the first author was visiting Kansas State University during the fall of 2023 with the support of the NFSC Grant No. 12250410252. 
}

\end{document}